\documentclass[11pt]{amsart}
\usepackage{amsmath,amssymb,enumerate}
\usepackage{epsf,epsfig,amsfonts,graphicx,color,url}
\usepackage{pgf,tikz}
\usepackage{ulem}

\setcounter{MaxMatrixCols}{10}

\usetikzlibrary{arrows}
\parindent=7pt
 \setlength{\parskip}{2pt}
 \numberwithin{equation}{section}

\newtheorem{theorem}{\rm\bf Theorem}[section]
\newtheorem{proposition}[theorem]{\rm\bf Proposition}
\newtheorem{lemma}[theorem]{\rm\bf Lemma}
\newtheorem{corollary}[theorem]{\rm\bf Corollary}
\theoremstyle{definition}
\newtheorem{definition}[theorem]{\rm\bf Definition}

\newtheorem{remark}[theorem]{\rm\bf Remark}

\newcommand{\weg}[1]{}



\begin{document}
\title{Chains in CR geometry as geodesics of a Kropina metric }
\author[J.-H. Cheng]{Jih-Hsin Cheng}
\address{Institute of Mathematics, Academia Sinica, Taipei and National
Center for Theoretical Sciences, Taipei Office, Taiwan, R. O. C.  \\     \url{jhcheng@gate.sinica.edu.tw}}

\author[T. Marugame]{Taiji Marugame}
\address{Institute of Mathematics, Academia Sinica, Taipei, Taiwan, R. O. C. \\  \url{marugame@gate.sinica.edu.tw}
}

\author[V. S. Matveev]{ Vladimir S. Matveev}
\address{Institut f\"ur Mathematik, Friedrich-Schiller Universit\"at Jena 
07737 Jena, Germany,  \   \url{vladimir.matveev@uni-jena.de}}

\author[R. Montgomery]{Richard Montgomery}
\address{University of California, Santa Cruz, Santa Cruz, CA 95064, USA \\   \url{rmont@ucsc.edu}}

\begin{abstract}
With the help of a generalization of the Fermat principle in general
relativity, we show that chains in CR geometry are geodesics of a certain
Kropina metric constructed from the CR structure. We study the projective
equivalence of Kropina metrics and show that if the kernel distributions of
the corresponding 1-forms are non-integrable then two projectively
equivalent metrics are trivially projectively equivalent.   As an application,
we show that sufficiently many chains determine the CR structure up to
conjugacy, generalizing and reproving the main result of \cite{cheng}. The correspondence between geodesics of the Kropina metric and chains allows us to use the methods of metric geometry and the  calculus of variations to study chains.  We use these methods    to re-prove the result of \cite{Jac1}  that locally 
 any two points of a strictly pseudoconvex CR manifolds  can be  joined by a chain. Finally, we  generalize this result to the global setting
 by  showing  that  any two points of a connected compact strictly pseudoconvex CR
manifold which admits a pseudo-Einstein contact form with positive
 Tanaka-Webster scalar curvature can be  joined by a chain.
\end{abstract}

\thanks{\textit{Key words and phrases.} Chains, CR geometry, Kropina metric,
Finsler geometry, projective equivalence.\\
2010 Mathematics Subject Classification: 32V05, 53B40, 53B30, 53A20}
\maketitle



\section{Introduction}

A non-degenerate CR structure on a $2n+1$ dimensional manifold $M$ is a pair $(H,J)$ where $%
H$ is a contact distribution and $J$ is a complex structure on $H$
satisfying a certain integrability condition.  See Section  \ref{CR} for details.

The \textit{chains} for a  CR geometry  are a family of curves on $M$
which are canonically constructed from the non-degenerate CR structure. See, for instance,
the book \cite{Jac} and the references therein. For any point of $M$ and any
direction not contained in $H$ there exists precisely one chain through this
point and tangent to this direction.

One of several equivalent definitions of chains goes through the Fefferman
metric introduced in \cite{fefferman}, which is an indefinite metric on a
circle bundle over $M$. The conformal class of the Fefferman metric is
canonically constructed from the  CR structure on $M$. The infinitesimal
generator $K$ of the circle action is a null  
Killing vector field for this metric. Chains are then defined to be the
projections to $M$ of 
null geodesics for this pseudo-Riemannian metric to $M$. See Definition \ref%
{def:4.1}. 
 
On the other hand, the \textit{Kropina metric} is a function on $TM$ given
by 
\begin{equation*}
F(x,\xi )=\frac{g(\xi ,\xi )}{\omega (\xi )}
\end{equation*}
\noindent where $g$ is a metric and $\omega $ is a nonvanishing 1-form on $M$%
. In this paper we allow metrics $g$ of all signatures; in fact, we will
also allow certain degenerate metrics: as it will become clear below, in order
to define geodesics it is sufficient that the restriction of $g$ to $\mathrm{%
ker}\,\omega$ is non-degenerate.

Kropina metrics are popular objects in Finsler geometry, despite the fact
that they are not strictly speaking Finsler metrics even when $g$ is
positive definite. Indeed $F$ is undefined for $\xi \in \mathrm{ker}\,\omega$%
.

Our first result is the following theorem.

\begin{theorem}
\label{thm:1} Chains are geodesics of a certain Kropina metric.
\end{theorem}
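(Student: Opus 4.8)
The plan is to exploit the definition of chains via the Fefferman metric together with a Fermat-type principle adapted to a \emph{null} Killing field. First I would recall, from the Fefferman construction, that after choosing a contact form $\theta$ on $M$ the Fefferman metric on the circle bundle $\pi\colon\mathcal{C}\to M$ can be written in the form
\[
 g_F \;=\; \pi^{*}g_\theta \;+\; 2\,\pi^{*}\theta \odot \sigma,
\]
where $g_\theta$ is the Levi (Webster) metric, which is non-degenerate on $H=\ker\theta$ and vanishes on the Reeb direction, and $\sigma$ is a connection $1$-form on $\mathcal{C}$ with $\sigma(K)\neq 0$, $K$ being the generator of the circle action. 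From this presentation one reads off immediately that $K$ is null, $g_F(K,K)=0$, and that $g_F(K,\cdot)=\pi^{*}\theta$ after normalizing $\sigma(K)=1$. Since $K$ is Killing, the pairing $g_F(K,\dot\Gamma)=\theta(\dot\gamma)$ is conserved along any geodesic $\Gamma$, where $\gamma=\pi\circ\Gamma$; for a chain this constant is nonzero, reflecting the transversality of chains to $H$.

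The heart of the argument is the following null-Killing version of the Fermat principle. Along any null curve $\Gamma$ the null condition $g_F(\dot\Gamma,\dot\Gamma)=0$ reads
\[
 g_\theta(\dot\gamma,\dot\gamma) + 2\,\theta(\dot\gamma)\,\sigma(\dot\Gamma)=0,
 \qquad\text{so}\qquad
 \sigma(\dot\Gamma)=-\tfrac{1}{2}\,\frac{g_\theta(\dot\gamma,\dot\gamma)}{\theta(\dot\gamma)}=-\tfrac{1}{2}\,F(\gamma,\dot\gamma),
\]
with $F$ the Kropina metric built from $g=g_\theta$ and $\omega=\theta$. Consequently a null curve is determined over a prescribed projection $\gamma$ once an initial fibre point is fixed, and its total fibre displacement equals $-\tfrac{1}{2}\int F(\gamma,\dot\gamma)\,d\tau$, i.e. $-\tfrac{1}{2}$ times the Kropina length of $\gamma$. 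I would then invoke the Fermat principle in the form: among null curves joining a fixed point of $\mathcal{C}$ to a fixed $K$-orbit (equivalently, with fixed projected endpoints $x_0,x_1\in M$), the null geodesics are exactly the critical points of the fibre-displacement functional. Because that functional is $-\tfrac{1}{2}$ times Kropina length, its critical points project to geodesics of $F$; hence every chain is a Kropina geodesic, proving Theorem \ref{thm:1}.

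The main obstacle is making the Fermat principle rigorous in the \emph{null} case: the classical statement is proved for a timelike Killing field and yields a Randers metric, whereas here the Killing field is null, which is precisely what produces a Kropina rather than a Randers metric. I would therefore need to (i) verify that the variational characterization ``critical points of the fibre-displacement functional $=$ null geodesics'' survives when the reference Killing field is null, handling the reparametrization (conformal) invariance of null geodesics so that the Kropina length emerges as the genuinely invariant quantity; and (ii) deal with the singular locus $\ker\omega=H$ of the Kropina metric by restricting throughout to curves with $\theta(\dot\gamma)\neq 0$, which is automatic for chains. A clean way to organize (i) is via symplectic reduction: null geodesics are the null bicharacteristics of the metric Hamiltonian on $T^{*}\mathcal{C}$, the conserved momentum $p_K=\theta(\dot\gamma)$ permits reduction by the $K$-action at a nonzero level, and I would check that the reduced flow on $T^{*}M$ coincides with the geodesic flow of $F$.
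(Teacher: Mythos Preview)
Your strategy---a null-Killing Fermat principle applied to the Fefferman metric---is exactly the paper's approach. The paper, however, establishes the null Fermat correspondence (its Theorem~\ref{fermat}) not via a variational fibre-displacement argument or symplectic reduction, but by a bare-hands computation: in adapted coordinates $\widetilde g = g_{ij}\,dx^idx^j + 2\omega_i\,dx^idx^0$ one writes out the Euler--Lagrange equations for the geodesic Lagrangian $L=\widetilde g(\dot{\widetilde\gamma},\dot{\widetilde\gamma})$ and for $F=g/\omega$, uses the conserved quantity $\omega_l\dot x^l=a^{-1}$ and the null constraint $2\dot x^0=-a\,g_{ij}\dot x^i\dot x^j$ to eliminate $\dot x^0$, and checks that $\partial F/\partial x^k=a\,\partial L/\partial x^k$ and $\partial F/\partial\xi^k=a\,\partial L/\partial\xi^k$, so the two EL systems are literally proportional. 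This is shorter and avoids the issues you flag in (i); your variational or reduction approach would also work and is conceptually attractive, but costs more to set up rigorously.

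One correction to your explicit formula: with $g_F=\pi^*g_\theta+2\,\pi^*\theta\odot\sigma$ and $g_\theta$ the Levi form, the null condition indeed gives $\sigma(\dot\Gamma)=-\tfrac12\,g_\theta(\dot\gamma,\dot\gamma)/\theta(\dot\gamma)$, but $\int\sigma(\dot\Gamma)$ is \emph{not} the fibre displacement, since $\sigma$ has a horizontal part. Writing $\sigma=c\,(ds+\alpha)$ with $\alpha$ horizontal, the actual displacement in $s$ is $-\tfrac{1}{2c}\int\bigl(g_\theta/\theta+2c\,\alpha\bigr)$; the bracketed expression is the correct Kropina metric on $M$, which is precisely the paper's formula~\eqref{Kropina-Fefferman} with its extra $1$-form terms coming from the Tanaka--Webster connection trace and scalar curvature. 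So ``$F=g_\theta/\theta$'' is an oversimplification; the theorem only claims \emph{some} Kropina metric, so this does not break your argument, but your identification of the fibre-displacement functional with $-\tfrac12\int g_\theta/\theta$ needs this adjustment.
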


By  a {\it geodesic} of a Kropina metric $F= g/\omega$ we   mean any smooth regular solution  $\gamma(t)$ 
of the Euler-Lagrange equation for the Lagrangian   $F$ satisfying the additional property
 that  $F(\gamma(t), \gamma'(t))$ is defined  (i.e., $ \omega(\gamma'(t))\ne 0$)
for all $t$.  

 In Section 4.3, we give a formula for
this Kropina metric. When $M$ is the boundary of a strictly pseudoconvex domain 
and $\rho$ is Fefferman's defining function for $M$ then we can   express this
 metric as $F=(\partial\overline\partial%
\rho)/{\rm Im}(\partial \rho)$  where we regard $\partial\overline\partial%
\rho$ as a symmetric 2-tensor restricted to $M$ and ${\rm Im}(\partial \rho)$ as a one-form restricted to $M$. 

In the above theorem and throughout this paper,  with the exception of Section \ref{indicatrix},  we consider geodesics
without preferred parameterization.  Clearly, $F(x, \xi)= -F(x, -\xi)$, so that for a geodesic $\gamma(t)$ the curve $\gamma(C -t)$ for an appropriate constant $C$ is   also a geodesic. For any point $(x,\xi)\in  TM$ such that $\omega(\xi)\ne 0$,  there exists a local geodesic of $g/\omega$,  unique up to re-pameterization,  which starts at $x$  tangent to  $\xi$.     

To formulate the next result and apply it to CR
geometry, we only need to know that $F$ has the form $g/\theta$, where $%
\theta $ is a contact form for the underlying contact structure on the CR
manifold $M$. The metric $g$ is defined up to the transformation $g\mapsto
g+\theta \cdot \beta $ with a closed 1-form $\beta $. Clearly this
transformation corresponds to an addition of the closed form $\beta $ to the
Kropina metric, and does not change its geodesics.

 The Kropina metric also has a relation to a different topic in CR geometry: 
We consider the energy functional $E(\gamma)$, the integral of the Kropina metric over 
a curve $\gamma $ transversal to the contact distribution in the CR manifold $M$. 
Suppose $M$ bounds an asymptotic complex hyperbolic domain $\Omega$ and $\gamma $ 
bounds a minimal surface $\Sigma $ in $\Omega $. In \cite{CMYZ}, $E(\gamma )$ was shown
to appear as the log term coefficient in the area renormalization expansion of $\Sigma $ for $\dim M=3.$

The next result concerns the  projective equivalence of Kropina metrics. 
According to the classical definition,  two geometric structures (Riemannian, Finsler,
or affine connections) are projectively equivalent  if they have the same
geodesics. In the case of Kropina metrics, we will  modify  this
definition.  The reason for this modification is that Kropina
geodesics are not defined precisely  along  directions  lying in the kernel of $%
\omega $. Thus if two Kropina metrics are projectively equivalent according to  the
classical definition, then their 1-forms coincide up to scale, which is an extremely
strong additional condition. Note that Theorem \ref{connection} below shows that for certain Kropina metrics we  can not reconstruct the kernel of $\omega$  by the geodesic equation. 
\weg{On the other hand, in certain cases one can
also naturally define geodesics of a Kropina metric in the direction of
vectors lying in the kernel of $\omega $ with a kind of limit procedure.
Actually we will see that for certain Kropina metrics there is no difference
on the level of equations for geodesics between vectors lying and not lying
in the kernel of $\omega $. Our definition of projective equivalence will
allow to include these \textquotedblleft additional\textquotedblright\
geodesics into consideration.}

We call a set of curves on the manifold $M$ \textit{sufficiently big} if for
any point $p\in M$ the set of tangent vectors at $p$ for these curves
contains a nonempty open subset of $T_{p}M$. We call two Kropina metrics 
\textit{projectively equivalent}, if there exists a sufficiently big set of
curves which are geodesics for both metrics.

Our second result is the following theorem:

\begin{theorem}
\label{thm:2} Suppose two Kropina metrics $F=g/\omega$, $\widehat F=\widehat
g/\widehat \omega$ are projectively equivalent and satisfy the following
conditions:

\begin{itemize}
\item $\mathrm{ker}\,\omega$ is non-integrable   i.e., $%
\omega\wedge d\omega \ne 0$ at almost every point; 

\item $g,\widehat{g}$ are non-degenerate on $\mathrm{ker}\,\omega $, $%
\mathrm{ker}\,\widehat{\omega }$ respectively.
\end{itemize}

Then, $\omega = \alpha\widehat\omega$ for a certain non-vanishing function $%
\alpha$ and there exist a constant $c\ne 0$ and a closed 1-form $\beta$ such
that $\widehat F = c F + \beta$.
\end{theorem}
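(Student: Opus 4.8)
The plan is to convert projective equivalence into an identity between the geodesic sprays of the two metrics, valid for \emph{all} admissible directions, and then to read off $\omega\parallel\widehat\omega$ from the singularities of these sprays along the respective kernels. First I would pass from the Kropina Lagrangians to their energies $L=\tfrac12F^{2}$, $\widehat L=\tfrac12\widehat F^{2}$ and to the associated geodesic sprays $G^{i}(x,\xi)$, $\widehat G^{i}(x,\xi)$, which are defined and smooth on $\{\omega(\xi)\neq0\}$ and $\{\widehat\omega(\xi)\neq0\}$ respectively. Two sprays have the same unparametrized geodesics through $(p,\xi)$ exactly when $\widehat G^{i}(p,\xi)-G^{i}(p,\xi)=P(p,\xi)\,\xi^{i}$ for a scalar $P$ homogeneous of degree one in $\xi$, and the assumption that the common geodesics form a sufficiently big family says this holds for $\xi$ in a nonempty open cone at each $p$. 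Since the Hessian of $L$ in $\xi$, its inverse, and hence $G^{i}$ are rational in $\xi$ (with denominators built from $\omega(\xi)$), both sides are real-analytic on their common domain, so agreement on an open cone forces the identity $\widehat G^{i}-G^{i}=P\,\xi^{i}$ for every admissible $\xi$. This upgrades ``sufficiently big'' to a genuine identity of rational functions of $\xi$, which I can analyze term by term.

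Next I would record the singular structure of the Kropina spray. Writing the extremals in the gauge $\omega(\dot\gamma)\equiv1$, they obey a magnetic-type equation whose only transverse force is carried by $d\omega$, namely a multiple of the vector $g$-dual to $\iota_{\dot\gamma}d\omega=d\omega(\dot\gamma,\cdot)$; equivalently $G^{i}(x,\xi)$ has a pole along the hyperplane $\{\omega(\xi)=0\}$ in each fibre. Because $g$ is non-degenerate on $\ker\omega$, the numerator $g(\xi,\xi)$ does not vanish identically there, so the pole is genuine and its leading coefficient (the residue) is, up to a scalar, the $g$-dual of $\iota_{\xi}d\omega$. The crucial point is that this residue is $g$-orthogonal to $\xi$, since $(\iota_{\xi}d\omega)(\xi)=d\omega(\xi,\xi)=0$; hence it is \emph{not} proportional to $\xi$ as soon as $\iota_{\xi}d\omega\neq0$.

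The hard part, and the whole content of the theorem, is to deduce $\ker\omega=\ker\widehat\omega$, i.e. $\omega=\alpha\widehat\omega$. This is exactly where non-integrability enters: $\omega\wedge d\omega\neq0$ is equivalent to $d\omega|_{\ker\omega}\neq0$, which provides directions $\xi\in\ker\omega$ with $\iota_{\xi}d\omega\neq0$, so the residue above is nonzero and transverse to $\xi$. Now suppose the two kernels differed on a set of positive measure; near a generic point of $\{\omega(\xi)=0\}$ the spray $\widehat G^{i}$ is regular, so $\widehat G^{i}-G^{i}$ inherits the pole of $-G^{i}$, and for $\widehat G^{i}-G^{i}=P\,\xi^{i}$ to hold the singular part of $G^{i}$ along $\{\omega(\xi)=0\}$ would have to be proportional to $\xi^{i}$, contradicting the transversality of the residue. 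Hence the pole loci coincide fibrewise, giving $\omega=\alpha\widehat\omega$ for a nonvanishing function $\alpha$. I expect this matching of singular parts to be the main obstacle, since it must be carried out carefully enough to control the limit $\omega(\xi)\to0$ and to handle the measure-zero exceptional set.

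Finally, with the kernels identified I would substitute $\widehat\omega=\omega/\alpha$, so that $\widehat F=\alpha\widehat g/\omega=\widetilde g/\omega$ with $\widetilde g=\alpha\widehat g$, reducing everything to two Kropina metrics $g/\omega$ and $\widetilde g/\omega$ with the \emph{same} $\omega$. Matching the regular parts of $\widehat G^{i}-G^{i}=P\,\xi^{i}$ then forces $\widetilde g=c\,g+\omega\odot\beta$ for a function $c$ and a $1$-form $\beta$; substituting this back, the vanishing of the remaining force terms, using that $g$ is non-degenerate on $\ker\omega$ together with $d\omega|_{\ker\omega}\neq0$, forces $dc=0$ and $d\beta=0$. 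Since $(\omega\odot\beta)(\xi,\xi)/\omega(\xi)=\beta(\xi)$, this says precisely $\widehat F=cF+\beta$ with $c$ a nonzero constant and $\beta$ closed, completing the proof.
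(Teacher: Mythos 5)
Your first two steps are essentially the paper's own argument in different clothing: the paper works with the affine lines of $2$-jets of unparameterized geodesics and a rational ``optimal solution'' $\eta_*(\xi)$ in place of your spray $G^i$, extends the equality from an open cone to all admissible $\xi$ by exactly your rationality argument, and locates $\mathrm{ker}\,\omega$ as the closure of the pole locus, the pole being genuine precisely because the leading coefficient is (a multiple of) the $g$-dual of $\iota_\xi d\omega$ projected off $\mathbb{R}\xi$ --- nonzero for generic $\xi\in\mathrm{ker}\,\omega$ by non-integrability and by avoiding $g$-null directions. (Two small caveats there: you should first reduce to $g$ non-degenerate on all of $T_pM$, as the paper does by replacing $g$ with $g+\omega\cdot df$, since you repeatedly raise indices with $g$; and the existence/smoothness of the spray on all of $\{\omega(\xi)\neq 0\}$ requires invertibility of the Hessian of $F^2/2$, which is not automatic for indefinite $g$ --- the paper sidesteps this by working with the Euler--Lagrange system $A(\xi)\eta=b(\xi)$ directly.)

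The genuine gap is in your final paragraph. After reducing to a common $1$-form $\omega$, you assert that ``matching the regular parts'' of $\widehat G^i-G^i=P\,\xi^i$ \emph{forces} $\widetilde g=c\,g+\omega\odot\beta$. This is the core of the theorem and it does not follow by inspection; moreover it is the \emph{singular} parts, not the regular ones, that carry the needed information. Concretely, matching the leading terms as $\omega(\xi)\to 0$ yields only the identity
\begin{equation*}
\bigl(\widehat h_{\alpha\beta}\xi^\alpha\xi^\beta\bigr)\,\widehat h^{\mu\nu}B_{\gamma\nu}\xi^\gamma
=\bigl(h_{\alpha\beta}\xi^\alpha\xi^\beta\bigr)\,h^{\mu\nu}B_{\gamma\nu}\xi^\gamma ,
\qquad \xi\in\mathrm{ker}\,\omega ,
\end{equation*}
where $h,\widehat h,B$ are the restrictions of $g,\widehat g,d\omega$ to $\mathrm{ker}\,\omega$. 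Deducing $\widehat h=c\,h$ from this is a nontrivial algebraic step: in the paper it requires symmetrizing, taking a trace, and then a factorization argument in the polynomial ring $\mathbb{R}[\xi^1,\dots,\xi^{n-1}]$ (irreducibility of the linear form coming from a nonzero row of $\widehat h^{-1}B$, ruled out degenerate factorizations via non-degeneracy of $h$), with a separate argument when $\dim\mathrm{ker}\,\omega=2$, where one instead shows $(\widehat h^{-1}B)(h^{-1}B)^{-1}$ is scalar. Without this step your proof establishes only that the kernels agree, not the conformal relation between $g$ and $\widehat g$ on the kernel, and hence not the conclusion $\widehat F=cF+\beta$. (Once that relation is in hand, your closing argument --- substitute $\widehat g=\alpha g+2\beta\cdot\omega$, compare the remaining terms, and use non-integrability to promote ``derivatives of $\alpha$ vanish along $\mathrm{ker}\,\omega$'' to ``$\alpha$ is constant,'' whence $\beta$ is closed --- is exactly the paper's final proposition and is fine in outline.)
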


Combining Theorem \ref{thm:1} and Theorem \ref{thm:2}, we obtain the
following generalization of the results of \cite{cheng, CZ}:

\begin{corollary}
\label{chain-diffeo} Suppose two non-degenerate CR structures $(H,J)$ and $(%
\widehat{H},\widehat{J})$ have the same sufficiently big family of chains.
Then these two CR structures coincide or are conjugate: that is, $H=\widehat{%
H}$ and either $J=\widehat{J}$ or $J=-\widehat{J}$.
\end{corollary}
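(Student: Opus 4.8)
The plan is to derive Corollary~\ref{chain-diffeo} by combining the two main theorems in a direct way. By Theorem~\ref{thm:1}, each non-degenerate CR structure produces a Kropina metric whose geodesics (as unparameterized curves) are exactly the chains. Concretely, if $(H,J)$ has contact form $\theta$ and Levi form giving the metric $g$ on $H$, the associated Kropina metric is $F = g/\theta$ (as described in the excerpt, $F = (\partial\overline\partial\rho)/\mathrm{Im}(\partial\rho)$ in the boundary model), and similarly $(\widehat H,\widehat J)$ yields $\widehat F = \widehat g/\widehat\theta$. The hypothesis that the two CR structures share a \emph{sufficiently big} family of chains translates immediately, via Theorem~\ref{thm:1}, into the statement that $F$ and $\widehat F$ are projectively equivalent in the sense defined just before Theorem~\ref{thm:2}: the common chains form a sufficiently big set of curves that are geodesics for both Kropina metrics.

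The next step is to verify the two structural hypotheses of Theorem~\ref{thm:2} in this CR setting. First, the kernel of the defining $1$-form is the contact distribution $H$, and non-integrability $\theta\wedge d\theta\ne 0$ is precisely the contact (nondegeneracy) condition built into the definition of a non-degenerate CR structure; so $\mathrm{ker}\,\theta$ is non-integrable automatically. Second, the restriction of $g$ to $\mathrm{ker}\,\theta = H$ is (up to sign normalization) the Levi form, which is non-degenerate by the non-degeneracy assumption on the CR structure; the same holds for $\widehat g$ on $\widehat H$. Thus Theorem~\ref{thm:2} applies and yields a non-vanishing function $\alpha$ with $\widehat\theta = \alpha\,\theta$, together with a constant $c\ne 0$ and a closed $1$-form $\beta$ such that $\widehat F = cF + \beta$.

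From $\widehat\theta = \alpha\theta$ I would first conclude the equality of the contact distributions: $H = \mathrm{ker}\,\theta = \mathrm{ker}\,\widehat\theta = \widehat H$. Then the relation $\widehat F = cF + \beta$ must be unpacked at the level of the defining data. Writing the Kropina metrics as $F = g/\theta$ and $\widehat F = \widehat g/\widehat\theta = \widehat g/(\alpha\theta)$, the identity $\widehat g/(\alpha\theta) = c\,(g/\theta) + \beta$ becomes, after clearing the common denominator $\theta$, an identity of symmetric $2$-tensors (mod the $\theta\cdot(\text{closed form})$ ambiguity already noted in the excerpt as not affecting the metric's geodesic data) relating $\widehat g$ to $g$ restricted to $H$. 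The upshot is that the Levi forms of the two structures agree on $H$ up to a positive conformal factor, which forces the underlying conformal/complex data on $H$ to coincide.

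The main obstacle, and the step requiring the most care, is the final passage from the tensorial identity $\widehat g \doteq (\text{scalar})\cdot g$ on $H$ to the dichotomy $J = \widehat J$ or $J = -\widehat J$. Here I would use that the complex structure $J$ on a non-degenerate CR manifold is recovered from the Levi form together with the orientation data encoded in $d\theta|_H$: a positive rescaling of the Levi form compatible with the same contact distribution determines $J$ only up to its complex conjugate $-J$, since both $J$ and $-J$ are compatible with the same real symmetric form. The sign of the function $\alpha$ (equivalently, whether the identification of the contact forms is orientation-preserving or reversing) is exactly what distinguishes $J = \widehat J$ from $J = -\widehat J$. I would therefore analyze the two sign cases of $\alpha$ separately, checking that the Hermitian compatibility $g(J\cdot,J\cdot) = g(\cdot,\cdot)$ together with $\widehat g \doteq \lambda g$ on $H$ leaves precisely these two possibilities for $\widehat J$, thereby establishing that the CR structures either coincide or are conjugate.
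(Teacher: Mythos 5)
Your first two steps coincide with the paper's proof and are correct: Theorem~\ref{thm:1} converts the common chains into a sufficiently big family of common Kropina geodesics, the contact condition and the non-degeneracy of the Levi form are exactly the two hypotheses of Theorem~\ref{thm:2}, and its conclusion gives $H=\widehat H$ together with $\widehat F=cF+\beta$. The genuine gap is in your final step. You propose to deduce $\widehat J=\pm J$ from the conformal relation of the Levi forms on $H$, on the grounds that a real symmetric form on $H$ admits only the two compatible complex structures $\pm J$. That claim is false as soon as $\dim H=2n\geq 4$: the set of complex structures Hermitian-compatible with a fixed positive definite form on $\mathbb{R}^{2n}$ is the homogeneous space $O(2n)/U(n)$, which has dimension $n(n-1)>0$, so compatibility with $g|_H$ leaves infinitely many candidates for $\widehat J$, not two. (Two further misstatements point the same way: the factor relating the Levi forms is the \emph{constant} $c$ from Theorem~\ref{thm:2}, not a positive function, and $c<0$ is precisely the conjugate case rather than something to be ruled out; a case analysis on the sign of $\alpha$ plays no role.)

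What closes the argument --- and is how the paper finishes --- is that, once $H=\widehat H$, you may build \emph{both} Kropina metrics from one and the same contact form $\theta$, so that both Levi forms are expressed through the same non-degenerate $2$-form $d\theta|_H$: write $F=g/\theta$, $\widehat F=\widehat g/\theta$ with $g|_H(X,Y)=d\theta(X,JY)$ and $\widehat g|_H(X,Y)=d\theta(X,\widehat JY)$. The relation $\widehat F=cF+\beta$ gives $\widehat g=cg+\beta\cdot\theta$ on $TM\setminus H$, hence on all of $TM$ by continuity, and restricting to $H$ kills the $\beta\cdot\theta$ term, leaving
\begin{equation*}
d\theta(X,\widehat JY)=c\,d\theta(X,JY),\qquad X,Y\in H.
\end{equation*}
Now it is the non-degeneracy of the \emph{skew} form $d\theta|_H$ (not of the symmetric Levi form) that forces the endomorphism identity $\widehat J=cJ$, and then $\widehat J^2=J^2=-\mathrm{id}$ gives $c^2=1$, i.e.\ $c=\pm 1$. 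This is a two-line algebraic finish and requires no classification of complex structures compatible with a metric; without this use of $d\theta|_H$ your argument cannot be completed in dimensions $\geq 5$.
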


Note that  \cite{cheng} assumes  that \textit{all} chains of one
structure are chains of the other structure. 
As explained
above,  this implies that the corresponding 1-forms are proportional. The latter assumption essentially simplifies the proof. 
 Similarly,   \cite{CZ}   requires that the corresponding contact distributions coincide.  

Theorem \ref{thm:2} describes all pairs of projectively equivalent Kropina
metrics such that for at least one of   them  the kernel
distribution of the corresponding 1-form is non-integrable. Let us now
consider the remaining case, i.e., when for both 1-forms the kernel
distributions are integrable. In this case without loss of generality we can
assume that the 1-forms are closed.  The next theorem shows that then the
geodesics are geodesics of a certain affine connection.

\begin{theorem}
\label{connection} If the 1-form $\omega$ is closed, then for any Kropina
metric $F=g/\omega$ there exists an affine connection $\nabla=
(\Gamma_{ij}{}^k)$ such that each geodesic of $F$ is a geodesic of $\nabla$.
\end{theorem}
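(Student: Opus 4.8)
The plan is to use the closedness of $\omega$ to turn the reparametrization-invariant variational problem for $F$ into a constrained \emph{quadratic} one, and then to extract a torsion-free affine connection by inverting a bordered matrix whose invertibility is guaranteed exactly by the non-degeneracy of $g$ on $\ker\omega$.

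First I would work locally and write $\omega = df$, which is possible since $d\omega = 0$. Any regular curve $\gamma$ transversal to $\ker\omega$ may be reparametrized by $s = f\circ\gamma$, so that $\omega(\gamma') = 1$ with ${}' = d/ds$; along such a curve $F(\gamma,\gamma') = g(\gamma',\gamma')$. Because $\int F\,dt$ is reparametrization invariant ($F$ is positively homogeneous of degree one in the velocity), the geodesics of $F$ are, in this preferred parametrization, precisely the extremals of $\int g(\gamma',\gamma')\,ds$ subject to the pointwise constraint $\omega(\gamma') = 1$. Introducing a Lagrange multiplier $\lambda(s)$ and writing the Euler--Lagrange equations for $g_{ij}\gamma'^i\gamma'^j + \lambda(\omega_i\gamma'^i - 1)$, the undifferentiated $\lambda$-terms combine into $\lambda(\partial_i\omega_k - \partial_k\omega_i)\gamma'^i$, which vanishes precisely because $d\omega = 0$. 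This is the one place the hypothesis on $\omega$ enters, and it is exactly what leaves an equation with coefficients polynomial (quadratic) in the velocities:
\[ g_{kj}\gamma''^j + \Gamma_{kij}\gamma'^i\gamma'^j = c\,\omega_k, \]
where $\Gamma_{kij} = \tfrac12(\partial_i g_{kj} + \partial_j g_{ki} - \partial_k g_{ij})$ are the Christoffel symbols of the first kind of $g$ and $c = -\tfrac12\lambda'$ is an a priori unknown scalar along $\gamma$.

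Next I would adjoin the equation from differentiating the constraint, $\frac{d}{ds}\omega(\gamma') = 0$, namely $\omega_j\gamma''^j = -(\partial_i\omega_j)\gamma'^i\gamma'^j$. Treating $(\gamma''^j,c)$ as the $N+1$ unknowns ($N = \dim M$), these relations form the bordered linear system
\[ \begin{pmatrix} g_{kj} & -\omega_k \\ \omega_j & 0 \end{pmatrix} \begin{pmatrix} \gamma''^j \\ c \end{pmatrix} = \begin{pmatrix} -\Gamma_{kij}\gamma'^i\gamma'^j \\ -(\partial_i\omega_j)\gamma'^i\gamma'^j \end{pmatrix}, \]
whose right-hand side is quadratic in $\gamma'$ and whose matrix $B$ depends only on the base point. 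The decisive algebraic point is that $B$ is invertible if and only if $g$ is non-degenerate on $\ker\omega$: a null vector $(a,c)$ of $B$ satisfies $g(a,\cdot)=c\,\omega$ and $\omega(a)=0$, so $a\in\ker\omega$ and $g(a,w)=c\,\omega(w)=0$ for all $w\in\ker\omega$, which forces $a$ into the radical of $g|_{\ker\omega}$ and hence $a=0$, after which $c\,\omega=0$ gives $c=0$. This argument also covers the degenerate-$g$ case permitted in the paper, since then $\ker g\cap\ker\omega = 0$. Inverting $B$ therefore yields $\gamma''^k = A^k{}_{ij}\gamma'^i\gamma'^j$ with $A^k{}_{ij}$ symmetric in $i,j$ and depending only on the point; these coefficients are built canonically from $g,\omega$ and their first derivatives, hence are independent of the auxiliary primitive $f$, and they patch to a global torsion-free affine connection $\nabla$ with $\Gamma_{ij}{}^k = -A^k{}_{ij}$.

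Finally I would record the conclusion: every geodesic of $F$, put into the parametrization $s = f\circ\gamma$, satisfies $\gamma''^k + \Gamma_{ij}{}^k\gamma'^i\gamma'^j = 0$, i.e. it is an affinely parametrized geodesic of $\nabla$, and in an arbitrary parametrization it is a geodesic of $\nabla$ up to reparametrization, as claimed. I expect the main obstacle to be expository rather than substantive: one must justify carefully that the reparametrized $F$-geodesics are exactly the constrained extremals (a standard consequence of reparametrization invariance together with the multiplier rule, the multiplier absorbing the difference between $\int F\,ds$ and $\int g\,ds$ off the constraint surface) and that the possibly degenerate $g$ is handled uniformly by the same bordered-matrix computation. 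The two genuinely decisive steps, the cancellation forced by $d\omega=0$ and the invertibility of $B$, are both short.
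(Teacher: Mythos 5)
Your proposal is correct, and it reaches the theorem by a genuinely different route than the paper. The paper's proof (Theorem \ref{thm:con}) first uses Lemma \ref{modify} to reduce to pseudo-Riemannian $g$, then simply \emph{exhibits} the connection $\Gamma_{ij}{}^k=\Gamma^g_{ij}{}^k+|\omega|_g^{-2}(\nabla^g_i\omega_j)\omega^k$ and verifies, via the normal-coordinate description $A\eta=b$ of Euler--Lagrange $2$-jets from Section \ref{sec:kropina}, that $\nabla$-geodesics solve the Euler--Lagrange equations for $F$; combined with uniqueness of Kropina geodesics (Corollary \ref{geodesic}) this yields the claim. You instead \emph{derive} the connection from a constrained variational principle: gauge-fix the parameter by the local primitive $f$, apply the multiplier rule, and invert a bordered matrix. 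The two constructions produce the same connection: eliminating your multiplier $c$ (for non-degenerate $g$) from the pair $\nabla^g_{\gamma'}\gamma'=c\,\omega_g$ and $(\nabla^g\omega)(\gamma',\gamma')+c\,|\omega|_g^2=0$ gives exactly the paper's geodesic equation $\nabla^g_{\gamma'}\gamma'=-|\omega|_g^{-2}(\nabla^g\omega)(\gamma',\gamma')\,\omega_g$, and your differentiated constraint is precisely the fact, implicit in the paper, that $\omega(\dot\gamma)$ is constant along $\nabla$-geodesics. Your route buys three things: it explains where the correction term proportional to $\omega^k$ comes from (it is the multiplier), it proves directly the implication actually asserted ($F$-geodesic $\Rightarrow$ $\nabla$-geodesic) rather than the converse plus a uniqueness argument, and the bordered-matrix computation handles degenerate $g$ uniformly, with no need for Lemma \ref{modify}. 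The price is that the two steps you call expository do need real, if standard, arguments: (i) the identification of $F$-geodesics with constrained extremals uses closedness a second time --- because $\omega=df$, every comparison curve from $p$ to $q$ reparametrizes to the \emph{same} interval $[f(p),f(q)]$, which is what legitimizes the gauge fixing, and normality of the multiplier rule holds because the constraint merely realizes curves as graphs over $f$, so there are no abnormal extremals; (ii) to see that $-A^k{}_{ij}$ patches to a torsion-free connection, observe that your bordered system is the coordinate-free pair ``Euler--Lagrange expression of the Lagrangian $g(\gamma',\gamma')$ equals $c\,\omega$'' together with ``$\tfrac{d}{ds}\omega(\gamma')=0$'', so its solution set in $J^2_pM$ is coordinate-independent and the quadratic solution map $\xi\mapsto\eta(\xi)$ transforms under \eqref{fiber} exactly as Christoffel symbols must. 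One small slip: the degenerate case is covered not because $\mathrm{ker}\,g\cap\mathrm{ker}\,\omega=0$, but because the radical of $g|_{\mathrm{ker}\,\omega}$ vanishes (the paper's standing assumption), which is what your invertibility argument in fact uses.
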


The precise formula for the connection is in Theorem \ref{thm:con}. It is torsion-free.

In particular, in dimension 2, all Kropina metrics are projectively
equivalent to affine connections. This is actually known and was one of the
motivations for introducing Kropina metrics, see \cite{Kropina}.

Note that the question when two affine connections are projectively
equivalent is well-understood, see e.g. \cite{Matveev2012}.

  Chern and Moser \cite[p. 222]{ChernMoser} told us to think of chains as the CR
versions of geodesics.  If their analogy is a good one, then any two
sufficiently nearby points ought to be connected by a chain, and if $M$ were
compact and connected, then any two points at all ought to be connected by a
chain. The first assertion does hold for strictly pseudoconvex CR manifolds:
nearby points are connected by chains. See \cite{Jac}, p. 185, and the
original references therein, including \cite{Jac1,Koch}. Surprisingly, the
second assertion is false, even if the compact manifold is locally CR
equivalent to the standard model, $S^{3}$ with its canonical strictly
pseudoconvex structure. This example, the Burns-Shnider counterexample, is
detailed in \cite[p. 185]{Jac}. See also the original reference, \cite{BS}.
 
The fact that   chains are geodesics of a Kropina metric allows us  to employ variational methods and  techniques of metric geometry to investigate chains. 
We will use these methods to 
  reprove and generalize   the famous result of  \cite{Jac1, Koch} 
 on local chain connectivity.

\begin{theorem}
\label{connect-thm} Let $F = g/\omega$ be a Kropina metric on $M$ with $g$
positive definite.  Then, the following statements hold: 

\begin{itemize}
\item[(A)] If at $p\in M$ we have
 $\omega
\wedge d \omega \ne 0$, then there exists a  neighborhood $U$ of $p$  such
that for  any  $q \in U$ one  can  join $p$ to $q$ by a length minimizing
Kropina geodesic which does not leave $U$.  This neighborhood $U$ can be chosen to be arbitrary small.

\item[(B)] Suppose that  $M$ is compact   and assume that the set of the points $p\in M$ 
such that $\omega
\wedge d \omega \ne 0$ is  connected and  everywhere dense in $M$. 
Then,  any two points of $M$ can be
joined by a length-minimizing Kropina geodesic $\gamma$  such that  $F(\gamma(t), \gamma'(t))>0 $ at each point. 
\end{itemize}
\end{theorem}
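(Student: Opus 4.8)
The plan is to treat both statements as length-minimization problems handled by the direct method of the calculus of variations, exploiting two structural features of $F=g/\omega$ when $g$ is positive definite. First, since $g>0$ the requirement that $F(\gamma,\dot\gamma)>0$ forces $\omega(\dot\gamma)>0$, so the admissible curves are exactly those transversal to $\mathrm{ker}\,\omega$ and positively oriented by $\omega$; along such curves $L(\gamma)=\int F(\gamma,\dot\gamma)\,dt$ is positive, invariant under orientation-preserving reparametrization, and defines a length structure. Second, at each point the indicatrix $\{g(\xi,\xi)=\omega(\xi)\}$ is, because $g$ is positive definite, a round $g$-sphere passing through the origin with tangent hyperplane $\mathrm{ker}\,\omega$ there; its interior $\{F\le1,\ \omega>0\}$ is convex, so on the open half-space $\{\omega>0\}$ the function $F$ is a convex, positively $1$-homogeneous integrand which blows up, $F\to+\infty$, as $\omega(\xi)\to0^+$. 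The convexity will give the lower semicontinuity and regularity I need, and the blow-up at $\mathrm{ker}\,\omega$ is an infinite length barrier that will keep minimizers strictly transversal.

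For (A) the first point is that nearby points can be joined by positively transversal curves of small length, and this is exactly where $\omega\wedge d\omega\neq0$ enters: if $\mathrm{ker}\,\omega$ were integrable then locally $\omega=f\,d\tau$ and $\tau$ would be strictly monotone along every positively transversal curve, obstructing any connection between points on the same level set of $\tau$; the contact condition removes this obstruction. In Darboux coordinates $\omega=dt-\sum_i y^i\,dx^i$ I would build connectors from three moves --- slow motion in the $\partial_t$ direction (cheap), motion in the $(x,y)$ directions with $\dot t$ large enough to keep $\omega(\dot\gamma)>0$, and small loops in the $(x^i,y^i)$-planes whose enclosed area produces a net $\oint y^i\,dx^i$ and so lets $t$ effectively decrease while staying transversal --- and estimate their lengths to see that the Kropina distance $d(p,q)=\inf_\gamma L(\gamma)$ is finite, small, and realized by curves in an arbitrarily small $U$. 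Granted this, I would run the direct method over positively transversal curves from $p$ to $q$ in $\overline{U}$: reparametrize by $g$-arclength so the curves are uniformly Lipschitz, extract a uniformly convergent subsequence by Arzel\`a--Ascoli, and apply lower semicontinuity of length under uniform convergence (valid for any length structure, here respecting the barrier) to obtain a limit $\gamma$ with $L(\gamma)\le d<\infty$. Finiteness of its length forces $\gamma$ to stay transversal, $\omega(\dot\gamma)>0$, since a tangency would contribute infinite length; comparison of $F$ from below with a fixed Riemannian length keeps $\gamma$ inside $U$ (and shows $U$ may be taken arbitrarily small); and on the transversal region $F$ is a smooth, strictly convex Lagrangian, so $\gamma$ is smooth and solves its Euler--Lagrange equation, i.e.\ is a Kropina geodesic in the sense defined after Theorem~\ref{thm:1}.

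For (B) I would upgrade this local picture to a global one by compactness. Part (A) makes the dense open contact locus $\{\omega\wedge d\omega\neq0\}$ into a length space for $d$; its connectedness together with density in $M$ lets me chain short transversal connectors to join any two points of $M$ by a positively transversal curve of finite length, so $d$ is finite on all of $M\times M$. Compactness of $M$ then supplies the completeness required for a Hopf--Rinow type argument in this asymmetric, conic length space: minimizing sequences between fixed endpoints are equibounded and, after $g$-arclength reparametrization, equicontinuous, so Arzel\`a--Ascoli and lower semicontinuity again yield a global length minimizer, which by the same barrier argument is everywhere transversal, $F(\gamma,\dot\gamma)>0$, and smooth.

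The main obstacle throughout is the singular, non-Finslerian nature of $F$, which is undefined on $\mathrm{ker}\,\omega$, so the standard existence and regularity theory does not apply off the shelf; the delicate point is to make the lower-semicontinuity and compactness arguments interact correctly with the infinite barrier $F\to+\infty$ so that the limiting minimizer has finite length and is therefore uniformly transversal and regular, which is precisely where the convexity of the indicatrix does the essential work. In (B) there is the additional difficulty of controlling the minimizer near the non-contact locus $\{\omega\wedge d\omega=0\}$; the density and connectedness hypotheses are designed to circumvent this by allowing the connecting and minimizing curves to be pushed into the contact region, where the length structure is well behaved.
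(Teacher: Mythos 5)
Your overall scaffolding (finiteness of the distance via a Chow--Rashevsky-type argument in Darboux coordinates, then Arzel\`a--Ascoli plus lower semicontinuity to produce a length-minimizing curve in the metric-space sense) matches the paper's, which gets these steps by citing \cite{Chow} and \cite[\S 2.5]{BBI}. But there is a genuine gap at the decisive step: your claim that ``finiteness of its length forces $\gamma$ to stay transversal, since a tangency would contribute infinite length'' is false, and with it the ``infinite barrier'' picture on which your regularity argument rests. The point is that the indicatrix $\{g(\xi,\xi)=\omega(\xi)\}$ is a $g$-sphere \emph{passing through the origin}: $F$ blows up as $\omega(\xi)\to 0^{+}$ only along vectors bounded away from $0$, whereas an $F$-unit-speed curve satisfies $|\dot\gamma|_{g}^{2}=\omega(\dot\gamma)$ and can approach a tangency to $\mathrm{ker}\,\omega$ simply by letting its $g$-speed tend to $0$, at a cost of exactly $1$ per unit of parameter. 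Even in $g$-arclength parametrization, where $F(\gamma,\dot\gamma)=1/\omega(\dot\gamma)$, a tangency at time $t_{0}$ with, say, $\omega(\dot\gamma(t))\sim|t-t_{0}|^{1/2}$ has finite length. So a minimizer could a priori have times where $\dot\gamma$ fails to exist or $\omega(\dot\gamma)=0$, and your final step (``on the transversal region $F$ is a smooth, strictly convex Lagrangian, so $\gamma$ is smooth and solves its Euler--Lagrange equation'') needs a uniform lower bound $\omega(\dot\gamma)\ge\delta>0$ that you never establish.

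This is exactly the difficulty the paper's proof is organized around, and it is resolved by hard estimates rather than a soft barrier argument. First, an auxiliary genuine (irreversible) Finsler metric $\tilde F$ is constructed which agrees with $F$ on $\{F=1,\ \omega(v)>\delta\}$ and satisfies $\tilde F\le F$ where $\omega(v)>0$; standard Finsler theory then shows the set of ``regular'' times (where $\dot\gamma$ exists and $\omega(\dot\gamma)>0$) is open, with a uniform extension time depending on $\delta$. Second, the key Lemma \ref{lem:smooth}: along an arc-length-parametrized minimal geodesic all of whose times in $[0,T)$ are regular, the Euler--Lagrange equations combined with the unit-speed relation $|\dot\gamma|_{g}^{2}=\omega(\dot\gamma)$ yield a differential inequality of the form $\frac{d}{dt}\,\omega(\dot\gamma)\ge -C\,\omega(\dot\gamma)$, so Gronwall gives $\omega(\dot\gamma(t))\ge\omega(\dot\gamma(0))e^{-CT}>0$ and the endpoint $T$ is regular as well. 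Third, a backward-geodesic trick (working with $-F=g/(-\omega)$) handles regularity at $t=0$, after which the structure of the open set of regular times forces every time to be regular. Some quantitative substitute for this Gronwall estimate is indispensable: without it, the direct method only produces a metric-geometry minimizer, not a Kropina geodesic in the sense of the theorem.
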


We will see that for  strictly pseudoconvex
CR manifolds the corresponding metric $g$ can be made positive definite locally. This follows
from Lemma \ref{modify} applied to the Kropina metric associated to a strictly pseudoconvex
CR manifolds. Moreover, on a strictly pseudoconvex CR manifold admitting  a pseudo-Einstein contact form with positive
Tanaka-Webster scalar curvature, the metric $g$ can be made positive definite globally (see (\ref{global-kropina})). On
strictly pseudoconvex CR manifolds we have by definition  that  $\omega
\wedge d \omega \ne 0$  at all points. Therefore, we obtain:

\begin{corollary}
\label{cr-connect-thm} Let $M$ be a strictly pseudoconvex CR manifold. Then the following statements hold: 

\begin{itemize}
  \item[(A)] Each point $p\in M$ has a   neighborhood $U$ such that   any $q \in U$ is connected to  $p$  by 
a chain lying  in $U$. 

\item[(B)] If $M$ is connected,  compact and admits a pseudo-Einstein contact form with positive
Tanaka-Webster scalar curvature, then, any two points of $M$ can be joined by a chain.
\end{itemize}
\end{corollary}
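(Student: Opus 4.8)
The plan is to reduce Corollary~\ref{cr-connect-thm} to Theorem~\ref{connect-thm} by exhibiting, on a strictly pseudoconvex CR manifold, a Kropina metric $F=g/\omega$ whose geodesics are exactly the chains and whose defining data satisfy the hypotheses of Theorem~\ref{connect-thm}. By Theorem~\ref{thm:1} we already know chains are geodesics of some Kropina metric of the form $g/\theta$, where $\theta$ is a contact form; and by the strict pseudoconvexity, $\theta\wedge d\theta^{n}\ne 0$ everywhere, which gives the non-integrability condition $\omega\wedge d\omega\ne 0$ at every point. The one genuine discrepancy is the sign/definiteness of $g$: the metric $g$ coming from the Fefferman construction is of mixed signature (it descends from the indefinite Fefferman metric), whereas Theorem~\ref{connect-thm} requires $g$ positive definite. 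The whole corollary therefore hinges on repairing the signature of $g$ without changing the geodesics.

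\emph{Part (A).} The key observation is that the geodesics of a Kropina metric $g/\omega$ are unchanged under the modifications $g\mapsto g+\theta\cdot\beta$ with $\beta$ closed, as recalled in the excerpt. The idea is to use Lemma~\ref{modify} (invoked in the text) to show that, in a small enough neighborhood of any point $p$, one can add such a term so that the modified tensor $\tilde g$ is positive definite on a neighborhood $U$, at least on $\ker\omega$ and in the relevant directions. Concretely, since positive definiteness is an open condition and the obstruction is a finite-dimensional signature issue that is controlled by the single transversal direction, one adjusts $g$ by a closed one-form multiple of $\theta$ to flip the problematic eigenvalue; locally a closed $\beta$ is just $d(\text{function})$, so there is enough freedom. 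Once $g$ is positive definite on $U$ and $\omega\wedge d\omega\ne 0$ at $p$, statement (A) of Theorem~\ref{connect-thm} applies verbatim and produces, for each $q$ in an arbitrarily small neighborhood, a length-minimizing Kropina geodesic from $p$ to $q$ inside $U$; that geodesic is a chain by Theorem~\ref{thm:1}. This yields part (A) of the corollary.

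\emph{Part (B).} Here the local trick is not enough: we need $g$ positive definite globally, and we need the connectedness/density hypothesis of Theorem~\ref{connect-thm}(B). The plan is to invoke the global formula (\ref{global-kropina}) referred to in the text, which under the assumption of a pseudo-Einstein contact form with positive Tanaka--Webster scalar curvature produces a \emph{globally} positive definite representative $g$ for the Kropina metric associated to the chains. The pseudo-Einstein, positive-scalar-curvature hypotheses are exactly what makes the relevant global term have a definite sign, so the curvature/definiteness bookkeeping from CR invariant theory is what must be checked. Since $M$ is strictly pseudoconvex we have $\omega\wedge d\omega\ne 0$ at \emph{all} points, so the set where non-integrability holds is all of $M$, which is connected (by hypothesis) and trivially dense. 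Thus all hypotheses of Theorem~\ref{connect-thm}(B) are met, and we conclude that any two points of $M$ are joined by a length-minimizing Kropina geodesic $\gamma$ with $F(\gamma,\gamma')>0$; by Theorem~\ref{thm:1} this geodesic is a chain, proving (B).

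The main obstacle, in both parts, is the positivity of $g$: everything else (non-integrability, the variational connectivity statements) is handed to us by strict pseudoconvexity and Theorem~\ref{connect-thm}. Locally the obstacle is mild and is dispatched by the closed-form modification of Lemma~\ref{modify}; globally it is genuinely restrictive and is precisely why the extra pseudo-Einstein/positive-curvature hypothesis enters. I would therefore spend most of the write-up verifying that formula (\ref{global-kropina}) does give a positive definite $g$ under those hypotheses, and checking that the induced geodesics still coincide with the chains after the modification.
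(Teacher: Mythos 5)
Your proposal is correct and follows the paper's own argument essentially verbatim: part (A) is the paper's reduction via Lemma~\ref{modify} (which applies with the positive-definite conclusion because $g$ restricted to $H$ is the Levi form, positive definite by strict pseudoconvexity, and the modification $g\mapsto g+\omega\cdot df$ only adds the exact form $df$ to $F$, leaving the geodesics, i.e.\ the chains, unchanged), followed by Theorem~\ref{connect-thm}(A); part (B) is the paper's use of the global Kropina metric \eqref{global-kropina}, whose tensor $g=h_{\alpha\overline\beta}\,\theta^\alpha\cdot\theta^{\overline\beta}+\tfrac{R}{n(n+1)}\theta\cdot\theta$ is positive definite precisely when $R>0$, combined with the fact that strict pseudoconvexity gives $\theta\wedge d\theta\neq 0$ everywhere, so Theorem~\ref{connect-thm}(B) applies. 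The positivity check you flag as the main remaining work is immediate from the orthogonal splitting $TM=H\oplus\mathbb{R}T$, so there is no gap.
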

 
 The Burns-Shnider   example    mentioned above is a CR structure on a compact manifold $S^{2n}\times S^{1}$ such that not every two points can be connected by a chain. 
It follows from Corollary \ref{cr-connect-thm} that we cannot find a
pseudo-Einstein contact form with positive scalar curvature for this CR structure. We discuss this fact in  detail in  Section \ref{BS}.  

Our paper is organised as follows. In Section \ref{sec:fermat}, we recall the
Fermat principle from general relativity and generalize it to the case when
the Killing vector field is null; this will give us the proof of Theorem \ref%
{thm:1}. In Section  \ref{sec:kropina}, we examine some properties of the set of
2-jets of solutions to the Euler-Lagrange equation for the Kropina metric
and prove Theorem \ref{thm:2} and Theorem \ref{connection}. In Section  \ref%
{chain-Kropina}, we describe the Kropina metric associated to the Fefferman
metric, and apply our results to prove Theorem \ref{thm:1} and Corollary \ref%
{chain-diffeo}. We also analyse  the Burns-Shnider example  in detail and show where the positivity of the scalar curvature fails.  In Section  \ref{indicatrix}, we describe 
  the indicatrix of the Kropina metric   and   prove
Theorem \ref{connect-thm}.    All objects in our papers are assumed to be sufficiently smooth; $C^2$-smoothness is enough for all proofs  related to Kropina metrics.   \bigskip 

\textbf{Acknowledgements: } The main results were obtained during the visit
of V.M. to Taipei supported by Academia Sinica and DFG. The topic was
suggested by R.M. in the open problem session of the workshop \textit{%
Analysis and Geometry on pseudohermitian manifolds} held at the American
Institute of Mathematics (AIM) and organized by Sorin Dragomir, Howard
Jacobowitz, and Paul Yang in November 2017. AIM supported the participation
of J.-H. C., V.M. and R.M. at this workshop. R.M. would like to thank the
NSF, grant DMS-20030177 and email discussions with Andrei Agrachev and Mikhail Zhitomirskii. 
 V.M. thanks Sergei Ivanov for a hint  which helped   prove Theorem \ref{connect-thm}.
J.-H. C. would like to thank the Ministry of Science and Technology of
Taiwan, R.O.C. for the support of the project: MOST 106 - 2115 - M - 001 -
013- and the National Center for Theoretical Sciences for the constant
support.

\bigskip

\section{Fermat Principle and Proof of Theorem \protect\ref{thm:1}}

\label{sec:fermat}

We start by recalling the Fermat Principle. For simplicity we restrict to a
local version.

Consider a Lorentzian metric $\widetilde{g}$ on $\widetilde{M}$ admitting a
time-like Killing vector field $K$ and a local space-like hypersurface $M$.
Consider the (Randers) Finsler metric on $M$ given by the formula 
\begin{equation*}
F_R(x,\xi )=\sqrt{\widetilde{g}(\xi ,\xi )+\widetilde{g}(K,\xi )^{2}}+%
\widetilde{g}(K,\xi )
\end{equation*}%
\noindent for $\xi \in T_{x}M$. The projection of a null geodesic $\gamma(t)$
to $M$ is the curve $\phi (\tau (t))\circ \gamma (t)$ where $\phi $ denotes
the flow of $K$ and the `time function' $\tau (t)$ is such that for each $t$
the point $\phi (\tau (t))\circ \gamma (t)$ lies on $M$. If we work locally,
in a small neighborhood of a point of $M$, no ambiguity appears. Then we
have:

\bigskip

\textbf{Fermat Principle} (see e.g. \cite{sanchez,perlick}): The projected
null geodesics $\phi (\tau (t))\circ \gamma (t)$ are geodesics of the metric 
$F_R$. Conversely, for any point $p\in M$ and for any geodesic $\gamma$ of $F_R$ 
starting from $p$, there exists a null geodesic $\widetilde\gamma$ of $\widetilde g$ 
such that $\widetilde g(K, \dot{\widetilde\gamma})<0$ starting from $p$ whose projection 
is $\gamma$. This geodesic is unique up to orientation-preserving 
reparameterisations.

\bigskip

The condition  above that the geodesic is oriented is important: indeed,  if the one-form $\xi\mapsto \widetilde g(K,\xi)$ is not closed, the metric $F_R$  is not geodesically reversible and 
for most geodesics $\gamma$ of $F_R$ the  same curve but with the reversed orientation is not a geodesic of $F_R$, see e.g. \cite{M2012}. Actually,  geodesics with reversed orientation, so called {\it backward geodesics}, correspond to projections of   null geodesics with $\widetilde g(K, \dot{\widetilde\gamma})>0$ to $M$.

Let us now generalize this statement to the case when the Killing vector
field is null.

Let $(\widetilde{M},\widetilde{g})$ be an $(n+1)$-dimensional Lorentzian or
pseudo-Riemannian manifold with a nonvanishing null Killing vector field $K$%
. Let $M$ be a hypersurface transverse to $K$. Using the flow generated by $K
$, we form a local coordinate system $(x^{0},x^{1},\dots ,x^{n})$ around a
point $p\in M$ such that $M=\{x^{0}=0\}$ and $K=\partial /\partial x^{0}$.
Then, $\widetilde{g}$ is written in the form 
\begin{align*}
\widetilde{g}& =g_{ij}dx^{i}dx^{j}+2\omega _{i}dx^{i}dx^{0} \\
& =%
\begin{pmatrix}
0 & \omega _{j} \\ 
\omega _{i} & g_{ij}%
\end{pmatrix}%
,
\end{align*}%
where the components $g_{ij}$, $\omega _{i}$ are functions of $(x^{1},\dots
,x^{n})$ and are independent of $x^{0}$ since $\widetilde{g}$ is invariant
under the flow of $K$ (Hereafter, Latin indices $i,j,k,\dots $ run from $%
1$ to $n$ and we adopt Einstein's summation convention.) We  view $%
\omega :=\omega _{i}dx^{i}$ as a 1-form on $M$ and $g$ as a (perhaps
degenerate) metric on $M$. Clearly $\omega =(K\lrcorner \widetilde{g})|_{TM}$%
. The \textit{Kropina metric} $F$ on $M$ associated to $\widetilde{g}$ is
defined by 
\begin{equation}
F(x,\xi ):=\frac{\widetilde{g}(\xi ,\xi )}{\widetilde{g}(K,\xi )}=\frac{%
g_{ij}\xi ^{i}\xi ^{j}}{\omega _{l}\xi ^{l}}  \label{Kropina-Lorentz}
\end{equation}%
for $\xi \in T_{x}M\setminus \mathrm{ker}\,\omega $. Note that $F$ depends
only on the conformal class of $\widetilde{g}$. If we consider another
hypersurface $M^{\prime }$ defined by $x^{0}=f(x^{1},\dots ,x^{n})$ and
identify it with $M$ by the flow of $K$, then the Kropina metric changes by
adding of the exact form $2df$ and so has the same geodesics.

Now let $\pi (x^{0},x^{i})=(x^{i})$ be the (local) projection $\widetilde{M}%
\rightarrow M$ along the integral curves of $K$. Then the correspondence
between 
  null geodesics of $\widetilde{g}$ and
  geodesics of $F$ is as follows:

\begin{theorem} [cf. {\protect\cite[Theorem 7.8]{Caponio}}] \label{fermat} 
If $\widetilde{\gamma}(t)$ is a null geodesic of $\widetilde
g $ with $\widetilde{g}(K, \dot{\widetilde{\gamma}}(0))\neq 0$, then $%
\gamma(t):=\pi(\widetilde{\gamma}(t))$ is a geodesic of $F$. Conversely, if $%
\gamma(t)$ is a geodesic for $F$ then  there exists a null geodesic $\widetilde{%
\gamma}(t)$ for $\widetilde g$ with $\pi(\widetilde{\gamma}(t))=\gamma(t)$  and  which is    uniquely determined 
 by the choice of $%
\widetilde{\gamma}(0)\in\pi^{-1}(\gamma(0))$.
\end{theorem}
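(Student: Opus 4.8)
The plan is to reduce the statement to a single equivalence between the Euler--Lagrange system of $\widetilde g$ and that of $F$, exploiting that $x^0$ is a cyclic coordinate. I would work with the energy Lagrangian $L(\dot{\widetilde\gamma}) = \widetilde g(\dot{\widetilde\gamma},\dot{\widetilde\gamma}) = g_{ij}\dot x^i\dot x^j + 2\omega_i\dot x^i\dot x^0$, whose extremals are the affinely parameterized geodesics of $\widetilde g$, and abbreviate $Q := g_{ij}\dot x^i\dot x^j$ and $P := \omega_l\dot x^l = \widetilde g(K,\dot{\widetilde\gamma})$, so that $F = Q/P$ and the null condition $\widetilde g(\dot{\widetilde\gamma},\dot{\widetilde\gamma}) = 0$ becomes $\dot x^0 = -F/2$. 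Since $\widetilde g$ does not depend on $x^0$, the Euler--Lagrange equation in the $x^0$-direction is exactly the Killing conservation law: $P$ is constant along any geodesic. For a null geodesic with $\widetilde g(K,\dot{\widetilde\gamma}(0)) \neq 0$ this constant $c = P$ is nonzero everywhere, so the projection $\gamma = \pi\circ\widetilde\gamma$ is regular and $F$ is defined along it.

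The heart of the argument is the comparison of the remaining ($x^k$) equations. Differentiating $F = Q/P$ gives the standard expressions for $\partial F/\partial\dot x^k$ and $\partial F/\partial x^k$; I would then multiply the Kropina equation $\tfrac{d}{dt}\tfrac{\partial F}{\partial\dot x^k} = \tfrac{\partial F}{\partial x^k}$ by the constant $c = P$, pull it through $\tfrac{d}{dt}$ (legitimate precisely because $P$ is conserved), and substitute $Q/P = F = -2\dot x^0$. A short calculation turns this into
\[
\frac{d}{dt}\!\left(2g_{kj}\dot x^j + 2\omega_k\dot x^0\right) = \partial_k g_{ij}\dot x^i\dot x^j + 2\,\partial_k\omega_i\dot x^i\dot x^0,
\]
which is exactly the $x^k$-component of the Euler--Lagrange equation for $L$. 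Hence, granted $P$ constant and $\dot x^0 = -F/2$, the spatial equations of $L$ hold if and only if $\gamma$ solves the Euler--Lagrange equation for $F$. Notably this manipulation never inverts $g$ or $\widetilde g$, so it remains valid for the degenerate $g$ allowed in the paper.

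Both assertions follow from this equivalence. In the forward direction an affinely parameterized null geodesic $\widetilde\gamma$ with $P \equiv c \neq 0$ automatically satisfies both hypotheses, so its projection solves the $F$-equation and is a Kropina geodesic. For the converse, given a Kropina geodesic $\gamma$, after reparameterizing so that $\omega(\dot\gamma) \equiv c$ is constant (possible since $\omega(\dot\gamma)$ is nowhere zero, hence of constant sign), I would define the lift by the null relation, setting $x^0(t) = x^0(0) - \tfrac12\int_0^t F(\gamma,\dot\gamma)\,ds$ from the prescribed point $\widetilde\gamma(0) \in \pi^{-1}(\gamma(0))$; then $\widetilde\gamma = (x^0,\gamma)$ is null by construction with $P \equiv c$, so its $x^0$-equation holds and its $x^k$-equations hold by the equivalence, making it a null geodesic. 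Uniqueness is immediate, since nullity forces $\dot x^0 = -F/2$ and hence determines $x^0(t)$ from $x^0(0)$.

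The point that needs care --- and which I expect to be the main obstacle --- is the bookkeeping of parameterizations. Because $F$ is positively homogeneous of degree one, its Euler--Lagrange equation is reparameterization invariant and degenerate, whereas the equivalence above is stated in the ``gauge'' $P \equiv c$, corresponding to the affine parameterization of $\widetilde\gamma$. I would therefore carry out the equivalence in that gauge and then check that under a reparameterization of $\gamma$ the homogeneity of $F$ rescales the integrand of $x^0$ by exactly the Jacobian, so that the lifted value $x^0(t)$ --- and thus the curve $\widetilde\gamma$ over $\gamma$ --- is well defined independently of the chosen parameterization. Reconciling this gauge freedom with the precise ``uniquely determined by $\widetilde\gamma(0)$'' statement is the only genuinely delicate step; the underlying algebra is routine.
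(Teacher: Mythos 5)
Your proposal is correct and takes essentially the same route as the paper's proof: exploit the cyclic coordinate $x^0$ to get conservation of $P=\widetilde g(K,\dot{\widetilde\gamma})$, eliminate $\dot x^0$ via the null condition, and observe that the spatial Euler--Lagrange expressions of $L$ and of $F$ then differ by the constant factor $P$, with the converse lift built by first reparameterizing $\gamma$ so that $\omega_l\dot x^l$ is constant and then integrating the nullity relation from the prescribed $\widetilde\gamma(0)$. The parameterization issue you flag at the end is handled in the paper in exactly the same way, by fixing the gauge $\omega_l\dot x^l=\mathrm{const}$ before lifting and reading the theorem's uniqueness statement in that gauge.
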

In other words, as in the Fermat Principle, null geodesics of $\widetilde g$ are essentially the same as  geodesics of the Kropina metric  $F$: locally, given a null geodesic starting at $p\in M\subset\widetilde M$ its projection to $M$ is a geodesic of $F$, and given a geodesic of $F$ starting at  $p\in M$ there locally  exists an unique  null  geodesic of $\widetilde g$ starting at $p$ whose projection  is the geodesic of $F$.     
\begin{proof}
We work in the local coordinate system defined above. Suppose $\widetilde{
\gamma }(t)=(x^{0}(t),x^{i}(t))$ is a null geodesic of $\widetilde{g}$ with $%
\widetilde{g}(K,\dot{\widetilde{\gamma }}(0))=\omega _{l}\dot{x}^{l}(0)\neq
0 $. Let 
\begin{equation*}
L:=g_{ij}\xi ^{i}\xi ^{j}+2\omega _{l}\xi ^{l}\xi ^{0}
\end{equation*}%
be the Lagrangian for the geodesic of $\widetilde{g}$. The Euler-Lagrange
equation 
\begin{equation*}
\frac{d}{dt}\Bigl(\frac{\partial L}{\partial \xi ^{0}}\Bigr)-\frac{\partial L%
}{\partial x^{0}}=0
\end{equation*}%
implies that $\omega _{l}\dot{x}^{l}$ is constant, so we set $a:=(\omega _{l}%
\dot{x}^{l})^{-1}\in \mathbb{R}$. The nullity condition gives 
\begin{equation*}
2\dot{x}^{0}=-ag_{ij}\dot{x}^{i}\dot{x}^{j}.
\end{equation*}%
Then we compute 
\begin{align*}
\frac{\partial L}{\partial x^{k}}& =(\partial _{k}g_{ij})\dot{x}^{i}\dot{x}%
^{j}+2(\partial _{k}\omega _{l})\dot{x}^{l}\dot{x}^{0}=(\partial _{k}g_{ij})%
\dot{x}^{i}\dot{x}^{j}-a(\partial _{k}\omega _{l})g_{ij}\dot{x}^{l}\dot{x}%
^{i}\dot{x}^{j}, \\
\frac{\partial L}{\partial \xi ^{k}}& =2g_{kj}\dot{x}^{j}+2\omega _{k}\dot{x}%
^{0}=2g_{kj}\dot{x}^{j}-a\omega _{k}g_{ij}\dot{x}^{i}\dot{x}^{j}.
\end{align*}%
On the other hand, we have 
\begin{align*}
\frac{\partial F}{\partial x^{k}}& =(\omega _{l}\dot{x}^{l})^{-1}(\partial
_{k}g_{ij})\dot{x}^{i}\dot{x}^{j}-(\omega _{l}\dot{x}^{l})^{-2}(\partial
_{k}\omega _{l})\dot{x}^{l}g_{ij}\dot{x}^{i}\dot{x}^{j} \\
& =a(\partial _{k}g_{ij})\dot{x}^{i}\dot{x}^{j}-a^{2}(\partial _{k}\omega
_{l})g_{ij}\dot{x}^{l}\dot{x}^{i}\dot{x}^{j}, \\
\frac{\partial F}{\partial \xi ^{k}}& =2(\omega _{l}\dot{x}^{l})^{-1}g_{kj}%
\dot{x}^{j}-(\omega _{l}\dot{x}^{l})^{-2}\omega _{k}g_{ij}\dot{x}^{i}\dot{x}%
^{j} \\
& =2ag_{kj}\dot{x}^{j}-a^{2}\omega _{k}g_{ij}\dot{x}^{i}\dot{x}^{j}.
\end{align*}%
Hence we obtain 
\begin{equation*}
\frac{\partial F}{\partial x^{k}}=a\frac{\partial L}{\partial x^{k}},\quad 
\frac{\partial F}{\partial \xi ^{k}}=a\frac{\partial L}{\partial \xi ^{k}}.
\end{equation*}%
Since $a$ is constant, the Euler-Lagrange equation for $L$ implies that for $%
F$.

Conversely, suppose $\gamma (t)=(x^{i}(t))$ is a geodesic of $F$. By
definition, we have $\omega _{l}\dot{x}^{l}(0)\neq 0$ and by changing the
parametrization we may assume that $\omega _{l}\dot{x}^{l}$ is constant. For
any point $\widetilde{y}\in \pi ^{-1}(\gamma (t))$, there exists a unique
null tangent vector at $\widetilde{y}$ which projects to $\dot{\gamma}(t)$,
and this assignment defines a null vector field along the fiber $\pi
^{-1}(\gamma )$. Then, for each choice of $\widetilde{\gamma }(0)\in \pi
^{-1}(\gamma (0))$ the integral curve of this vector field projects to $%
\gamma $, and by the above computations, satisfies the Euler-Lagrange
equation for $L$. Thus we have completed the proof.
\end{proof}

We can also perform the inverse construction: Suppose that $F(x,\xi
)=g_{ij}\xi ^{i}\xi ^{j}/\omega _{l}\xi ^{l}$ is a Kropina metric on $M$
such that $g$ is non-degenerate on $\mathrm{ker}\,\omega $. Then $\widetilde{%
g}:=g_{ij}dx^{i}dx^{j}+2\omega _{i}dx^{i}dx^{0}$ defines a pseudo-Riemannian
metric on $M\times \mathbb{R}$ with $\partial /\partial x^{0}$ being a null
Killing vector field, and the associated Kropina metric is given by $F$.
Thus, by Theorem \ref{fermat}, we have the following

\begin{corollary}
\label{geodesic} Let $F(x,\xi )=g_{ij}\xi ^{i}\xi ^{j}/\omega _{l}\xi ^{l}$
be a Kropina metric on $M$ such that $g$ is non-degenerate on $\mathrm{ker}%
\,\omega $. For any $p\in M$ and $\xi \in T_{p}M{\setminus \mathrm{ker}%
\,\omega}$, there exists a unique geodesic $\gamma (t)$ of $F$ with $\gamma
(0)=p$, $\dot{\gamma}(0)=\xi $.
\end{corollary}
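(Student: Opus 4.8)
The plan is to deduce the corollary directly from the inverse construction described just above it together with Theorem~\ref{fermat}. The only point that genuinely needs checking is that $\widetilde g = g_{ij}dx^i dx^j + 2\omega_i dx^i dx^0$ really is a non-degenerate pseudo-Riemannian metric on $M\times\mathbb R$; granting this, $K=\partial/\partial x^0$ is a nonvanishing null Killing field with $\omega = (K\lrcorner\widetilde g)|_{TM}$, the associated Kropina metric is exactly $F$, and both halves of the statement then follow from the two halves of Theorem~\ref{fermat}.

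First I would verify the non-degeneracy of $\widetilde g$, which is where the hypothesis that $g$ is non-degenerate on $\mathrm{ker}\,\omega$ enters. Suppose $v = v^0\partial_0 + v^i\partial_i$ satisfies $\widetilde g(v,\cdot)=0$. Pairing with $\partial_0$ gives $\omega_i v^i=0$, so the spatial part $\bar v := v^i\partial_i$ lies in $\mathrm{ker}\,\omega$. Pairing with $\partial_k$ gives $v^0\omega_k + g_{ik}v^i = 0$ for each $k$; contracting this relation with the components of an arbitrary $w\in\mathrm{ker}\,\omega$ kills the $\omega_k$ term and yields $g(\bar v,w)=0$. Since $g$ is non-degenerate on $\mathrm{ker}\,\omega$ and $\bar v\in\mathrm{ker}\,\omega$, this forces $\bar v=0$; the remaining relation $v^0\omega_k=0$ together with $\omega\neq 0$ then forces $v^0=0$, so $\widetilde g$ is non-degenerate.

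For existence, fix $p$ and $\xi$ with $\omega(\xi)\neq 0$ and choose any $\widetilde p\in\pi^{-1}(p)$. Because $\omega(\xi)\neq 0$, the nullity condition $g_{ij}\xi^i\xi^j + 2\omega_i\xi^i\,\xi^0 = 0$ is solved by a single value of $\xi^0$, so there is a unique null vector $\widetilde\xi$ at $\widetilde p$ projecting to $\xi$. Taking the unique $\widetilde g$-geodesic with initial data $(\widetilde p,\widetilde\xi)$ --- which exists and is unique by the standard ODE theory for the now non-degenerate metric $\widetilde g$ --- I obtain a curve whose velocity stays null, since $\widetilde g(\dot{\widetilde\gamma},\dot{\widetilde\gamma})$ is conserved along geodesics. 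Moreover $\widetilde g(K,\dot{\widetilde\gamma})$ is conserved because $K$ is Killing, so it stays equal to $\omega(\xi)\neq 0$; hence the projection $\gamma=\pi(\widetilde\gamma)$ stays transverse to $\mathrm{ker}\,\omega$ and, by Theorem~\ref{fermat}, is a geodesic of $F$ with $\gamma(0)=p$, $\dot\gamma(0)=\xi$.

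For uniqueness I would use the converse half of Theorem~\ref{fermat}. Normalizing so that $\omega(\dot\gamma)$ is constant (consistent with $\dot\gamma(0)=\xi$), any two geodesics $\gamma_1,\gamma_2$ of $F$ with the given initial data lift to null geodesics $\widetilde\gamma_1,\widetilde\gamma_2$ of $\widetilde g$ through the common point $\widetilde p$; their initial velocities are null and project to $\xi$, hence agree by the uniqueness of the null lift used above, and then $\widetilde\gamma_1=\widetilde\gamma_2$ by uniqueness of $\widetilde g$-geodesics, giving $\gamma_1=\gamma_2$. The main obstacle is really just the non-degeneracy computation of the second paragraph; everything else is an assembly of Theorem~\ref{fermat} with the two conservation laws, the one piece of bookkeeping being to reconcile the unparameterized-geodesic convention for $F$ with the fixed initial velocity via the normalization $\omega(\dot\gamma)=\mathrm{const}$.
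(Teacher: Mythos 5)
Your proposal is correct and takes essentially the same route as the paper: the paper obtains Corollary \ref{geodesic} precisely by the inverse construction $\widetilde{g}=g_{ij}dx^{i}dx^{j}+2\omega_{i}dx^{i}dx^{0}$ on $M\times\mathbb{R}$ combined with the two directions of Theorem \ref{fermat}. The details you supply---the non-degeneracy check for $\widetilde{g}$, the unique null lift of $\xi$, the two conservation laws, and the normalization $\omega(\dot{\gamma})=\mathrm{const}$---are exactly the justifications left implicit in the paper's one-line deduction.
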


\textsc{Proof of Theorem \ref{thm:1}.} Apply Theorem \ref{fermat} to the
Fefferman metric (see Section  \ref{chain-Kropina}) to obtain that chains are
geodesics of the associated Kropina metric. From the construction of the
Fefferman metric described in Section  \ref{chain-Kropina} we also see that the
one-form $\omega $ forming the denominator of the Kropina  metric is a
contact form for the contact distribution $H$. \qed 

\section{Projective equivalence of Kropina metrics and proof of  
Theorems \protect\ref{thm:2} and \protect\ref{connection}} 

\label{sec:kropina}

Let $F(x,\xi )=g(\xi ,\xi )/\omega (\xi )$ be a Kropina metric on $M$.
For most of this section we assume that

\begin{itemize}
\item $H:= \mathrm{ker}\,\omega$ is non-integrable at every point;

\item $g$ is non-degenerate on $H $.
\end{itemize}

Our   first  goal is to show that a sufficiently big family of
geodesics determines $F $ up to  transformations of the form $\widehat{F}%
=cF+\beta $, where $c$ is a constant and $\beta $ is a closed 1-form. We
begin by reducing to the case where $g$ is a pseudo-Riemannian metric.

\begin{lemma}
\label{modify} In a neighborhood of any point $p\in M$, there exists a
smooth function $f$ such that $g+\omega\cdot df$ is a pseudo-Riemannian metric. If $g$ is positive definite on $H$, then   $g+\omega\cdot df$ is 
also  positive definite for a certain $f$. 
\end{lemma}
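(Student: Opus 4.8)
The plan is to reduce the claim to a single point $p$ and then spread non-degeneracy over a neighborhood by an openness argument. Two elementary facts drive this reduction: non-degeneracy (and positive definiteness) of a continuously varying symmetric $2$-tensor is an open condition, and every covector $\alpha\in T_p^*M$ is realized as $df_p$ for some smooth $f$ (take $f$ linear in local coordinates centred at $p$). Note that one cannot simply choose $df$ proportional to $\omega$ as a covector \emph{field}: since $\ker\omega$ is non-integrable, $\omega\wedge d\omega\neq 0$, so $\omega$ is not closed and $\mu\,\omega$ is not exact. Thus I will only arrange $df_p$ to be a suitable multiple of $\omega_p$ \emph{at the point} $p$, and let continuity handle the rest.

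For the pointwise step, work in $V=T_pM$ with $\omega=\omega_p$, $g=g_p$ and $H=\ker\omega$. Because $g|_H$ is non-degenerate, the map $h\mapsto g(h,\cdot)|_H$ is an isomorphism $H\to H^*$, so any transversal vector can be corrected by an element of $H$ to a vector $T$ with $\omega(T)=1$ that is $g$-orthogonal to $H$. In the splitting $V=H\oplus\mathbb{R}T$ the matrix of $g$ is then block-diagonal, $\mathrm{diag}(g|_H,\,c)$ with $c=g(T,T)$, while the rank-one tensor $\omega\cdot\omega$ has its only nonzero entry, call it $\kappa\neq 0$, in the $(T,T)$ slot. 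Hence $g+\mu\,\omega\cdot\omega$ has matrix $\mathrm{diag}(g|_H,\,c+\kappa\mu)$, whose determinant $\det(g|_H)(c+\kappa\mu)$ is an affine function of $\mu$ with nonzero leading coefficient $\det(g|_H)\kappa$; it is therefore non-degenerate for all but one value of $\mu$. In the positive-definite case, $g|_H>0$ and choosing $\mu$ so that the transversal entry $c+\kappa\mu$ is positive makes the whole block-diagonal form positive definite.

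Finally I globalize: fix such a $\mu$, put $\alpha:=\mu\,\omega_p$, and choose $f$ with $df_p=\alpha$ as above. At $p$ we have $g+\omega\cdot df=g_p+\mu\,\omega_p\cdot\omega_p$, which is non-degenerate (resp. positive definite) by the previous step; by openness it remains so on some neighborhood $U$ of $p$, where $g+\omega\cdot df$ is then the desired pseudo-Riemannian (resp. Riemannian) metric. The only conceptually delicate point is the first one — recognizing that proportionality to $\omega$ may be imposed only at $p$ because $\omega$ is not closed — after which the orthogonalization trick renders the linear algebra routine and openness concludes the argument.
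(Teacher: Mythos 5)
Your proof is correct and takes essentially the same approach as the paper's: both build the $g$-orthogonal splitting $T_pM=H_p\oplus\mathbb{R}T$ (your $T$ is the paper's $X$, the kernel of $g:T_pM\to H_p^{*}$), choose $f$ with $df_p$ proportional to $\omega_p$ so that only the $(T,T)$-entry of the form is altered and can be made nonzero (resp.\ positive), and then conclude on a neighborhood by openness of non-degeneracy (resp.\ positive definiteness). Your additional remark that proportionality $df=\mu\,\omega$ can only be imposed at the single point $p$ (since $\omega$ need not be closed) is a correct observation that the paper leaves implicit.
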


\begin{proof}
Since $g|_{H}$ is non-degenerate, we have the orthogonal decomposition $%
T_{p}M=\mathbb{R}X\oplus H_p$, where $X$ is a basis of the one-dimensional
kernel of $g:T_{p}M\rightarrow H_p ^{\ast}$. If we choose a function $f$ so
that $\mathrm{ker}\,(df)_{p}=H_p$ and $g(X,X)+\omega (X)df_p (X)>0$, then $%
g+\omega \cdot df$ is non-degenerate near $p$. In particular, when $g|_H$ is positive definite, $g+\omega\cdot df$ is a Riemannian metric. 
\end{proof}

Next, we give an algebraic description of the space of the 1st and the 2nd
derivatives of solutions to the unparameterized geodesic equations --- i.e.,
solutions to the Euler-Lagrange equations for $F$. To this end, let $J^2 M$
denote the space of 2nd jets of curves on $M$. $J^2 M $ forms a fiber bundle
over $M$ with fiber $\mathbb{R}^{2n}$. 
A local coordinate system $(x^{1},\dots ,x^{n})$ around a point $p\in M$
induces fiber coordinates $\xi^i, \eta^j$, $i, j =1, \ldots ,n$ on $J^2 M$
with  $\xi^i$ standing for $\dot x^i $ and $\eta^j$ for $\ddot x^j$ with $%
x^i (t)$ being a curve germ in $M$.  There is a canonical  projection $J^2 M \to TM$ which
just keeps the first order Taylor information,  and so sends $(x^i, \xi^j,
\eta^k)$ to $(x^i, \xi^j)$. This projection gives $J^2 M$ the structure of
an affine bundle over $TM$. Indeed, if $(\widetilde{x}^{1},\dots ,\widetilde{%
x}^{n})$ is another system of coordinates around $p$, by taking the second
derivative of a curve expressed in terms of $\widetilde{x}^i$ instead of $x^i
$ we compute the affine relation 
\begin{equation}  \label{fiber}
\widetilde{\eta }^{i}=\frac{\partial \widetilde{x}^{i}}{\partial x^{j}}%
(p)\eta ^{j}+\frac{\partial ^{2}\widetilde{x}^{i}}{\partial x^{j}\partial
x^{k}}(p)\xi ^{j}\xi ^{k}.
\end{equation}
relating the fiber coordinates $\eta$ and $\widetilde \eta$. Finally, if $%
c(t)$ is curve germ passing through $p$ at $t =0$ then $(j^2 c)(0)$ denotes
its second order Taylor expansion, or ``2nd jet'' and is a well-defined
object, independent of coordinates, whose expression in our coordinates is $%
\xi^i = \dot x^i (0)$ and $\eta^j = \ddot x^j (0)$.

Let 
\begin{equation*}
\mathcal{E}_p= \{(j^2 c )(0): c \text{ a solution to the EL equations for $F$
having } c(0) = p \} \subset J^2 _p M.
\end{equation*}
We will give an algebraic description of this subvariety. Due to the
homogeneity of $F$,  this
variety is given by a system of algebraic equations which are homogeneous in
the velocities $\xi$. The Euler-Lagrange equations are 
\begin{equation}
\frac{d}{dt}\Bigl(\frac{\partial F}{\partial \xi ^{k}}\Bigr)-\frac{\partial F%
}{\partial x^{k}}=0.  \label{EL}
\end{equation}%
%
The left-hand side is computed as 
\begin{equation}
\begin{aligned} &\quad\bigl(\omega_l \dot x^l)^{-1}(2g_{kj}\ddot
x^j+2(\partial_m g_{kj})\dot x^m\dot x^j-(\partial_k g_{ij})\dot x^i\dot
x^j\bigr) \\ &+\bigl(\omega_l \dot x^l)^{-2}(-2g_{ki}\omega_j\dot x^i\ddot
x^j-2\omega_k g_{ij}\dot x^i\ddot x^j\\ &\quad\quad\quad\quad\quad
-\omega_k(\partial_m g_{ij})\dot x^m\dot x^i \dot x^j
-2g_{kj}(\partial_m\omega_l)\dot x^j\dot x^m\dot x^l \\
&\quad\quad\quad\quad\quad-g_{ij}(\partial_m\omega_k)\dot x^i\dot x^j\dot
x^m+g_{ij}(\partial_k\omega_l) \dot x^i\dot x^j\dot x^l \bigr) \\
&+(\omega_l \dot x^l)^{-3} \bigl(2g_{il}\omega_k\omega_j\dot x^i\dot
x^l\ddot x^j+2g_{ij}\omega_k(\partial_m\omega_l) \dot x^i\dot x^j\dot
x^m\dot x^l \bigr). \end{aligned}  \label{EL-formula}
\end{equation}%
Replace $\dot{x}^{i}$, $\ddot{x}^{i}$ in \eqref{EL-formula} by $\xi ^{i}$, $%
\eta ^{i}$ respectively, and evaluate all coefficients at $p$. The  result
is 
\begin{equation*}
(\omega_l\xi^l)^{-1}  2(A_{kj}(\xi) \eta ^{j}-b_{k}(\xi)) ,
\end{equation*}%
where 
\begin{equation}
\begin{aligned} A_{kj}(\xi)&=g_{kj}- (\omega_l\xi^l)^{-1}
(g_{ki}\omega_j\xi^i+\omega_k g_{ij}\xi^i)+ (\omega_l\xi^l)^{-2}
g_{il}\omega_k\omega_j\xi^i\xi^l, \\ 
 2b_k (\xi)&=-2(\partial_m
g_{kj})\xi^m\xi^j+(\partial_k g_{ij})\xi^i\xi^j \\
&\quad+(\omega_l\xi^l)^{-1}\bigl(\omega_k(\partial_m g_{ij})\xi^m\xi^i \xi^j
+2g_{kj}(\partial_m\omega_l)\xi^j\xi^m\xi^l \\
&\quad\quad\quad\quad\quad\quad\quad+g_{ij}(\partial_m\omega_k)\xi^i\xi^j%
\xi^m-g_{ij}(\partial_k\omega_l) \xi^i\xi^j\xi^l \bigr) \\ &\quad-2
(\omega_l\xi^l)^{-2} g_{ij}\omega_k(\partial_m\omega_l)\xi^i\xi^j\xi^m\xi^l.
\end{aligned}  \label{A-b}
\end{equation}

\begin{lemma}
\label{kernel} \label{ker-A} If $\xi\notin H$, then $\mathrm{ker} (A_{kj}
(\xi))=\mathbb{R}\xi$.
\end{lemma}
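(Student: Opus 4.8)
The plan is to read off $A_{kj}(\xi)$ from \eqref{A-b} as a low-rank perturbation of $g$ and then exploit the non-degeneracy of $g$ on $H$. First I would fix notation: set $w:=\omega_l\xi^l=\omega(\xi)$, which is nonzero because $\xi\notin H$, and write $v_k:=g_{kj}\xi^j$ and $s:=g_{ij}\xi^i\xi^j=g(\xi,\xi)$. Using the symmetry of $g$, the displayed formula for $A$ collapses to
\[
A_{kj}(\xi)=g_{kj}-w^{-1}\bigl(v_k\omega_j+\omega_k v_j\bigr)+w^{-2}s\,\omega_k\omega_j .
\]
The easy inclusion $\mathbb{R}\xi\subseteq\ker\bigl(A(\xi)\bigr)$ follows by contracting with $\xi^j$ and using $v_j\xi^j=s$ and $\omega_j\xi^j=w$, which yields $A_{kj}\xi^j=v_k-w^{-1}(v_k w+\omega_k s)+w^{-2}s\,\omega_k w=0$. (Conceptually this is Euler's relation: a short computation shows $A_{kj}=\tfrac{w}{2}\,\partial^2F/\partial\xi^k\partial\xi^j$, so $A(\xi)$ is, up to a nonzero factor, the fundamental tensor of the $1$-homogeneous function $F$, and the radial direction always lies in its kernel.)

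For the reverse inclusion I would take any $\zeta=(\zeta^j)$ with $A_{kj}(\xi)\zeta^j=0$ and rewrite the condition. Abbreviating $g(\xi,\zeta)=v_j\zeta^j$ and $\omega(\zeta)=\omega_j\zeta^j$, the vanishing equation reads
\[
g_{kj}\zeta^j=w^{-1}\omega(\zeta)\,v_k+\bigl(w^{-1}g(\xi,\zeta)-w^{-2}s\,\omega(\zeta)\bigr)\omega_k ,
\]
which says that the covector $g(\zeta,\cdot)$ is the combination $\alpha\,g(\xi,\cdot)+\beta\,\omega$ with $\alpha:=w^{-1}\omega(\zeta)$. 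The decisive move is to set $\eta:=\zeta-\alpha\xi$; then the identity becomes $g(\eta,\cdot)=\beta\,\omega$ as a covector, while the very choice of $\alpha$ forces $\omega(\eta)=\omega(\zeta)-\alpha w=0$, so that $\eta\in H$.

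The hypotheses close the argument in the final step. Pairing $g(\eta,\cdot)=\beta\,\omega$ with an arbitrary $h\in H$ gives $g(\eta,h)=\beta\,\omega(h)=0$, so $\eta\in H$ lies in the radical of $g|_H$; since $g$ is non-degenerate on $H$ this radical is trivial, whence $\eta=0$ and therefore $\zeta=\alpha\xi\in\mathbb{R}\xi$. Combined with the first inclusion this gives $\ker\bigl(A(\xi)\bigr)=\mathbb{R}\xi$. I do not expect a genuine obstacle: the calculations are routine, and the only point demanding care is the bookkeeping that pins down $\alpha$ so that $\eta$ lands in $H$, since it is exactly this choice that converts the kernel equation into a $g$-orthogonality statement against $H$, after which non-degeneracy of $g|_H$ finishes the proof. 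It is worth recording that only non-degeneracy of $g$ on $H$ — and not on all of $T_pM$ — is used, in line with the standing assumptions of this section.
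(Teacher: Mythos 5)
Your proof is correct and follows essentially the same route as the paper: verify $A_{kj}(\xi)\xi^j=0$ by direct contraction, split a kernel vector along $T_pM=\mathbb{R}\xi\oplus H$ (your $\alpha=\omega(\zeta)/\omega(\xi)$ is exactly this decomposition), observe that the $\omega$-correction terms in $A$ drop out when paired against $H$ so the $H$-component is $g$-orthogonal to $H$, and conclude by non-degeneracy of $g|_H$. Your side remark identifying $A_{kj}$ with $\tfrac{\omega(\xi)}{2}\,\partial^2F/\partial\xi^k\partial\xi^j$, so that the easy inclusion is Euler's relation for the degree-zero homogeneous $\partial F/\partial\xi^k$, is a nice conceptual addition (the paper makes a related reparameterization-invariance remark separately), but the substance of the argument is the same.
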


\begin{proof}
One can readily check by hand that $A_{kj} (\xi) \xi^j=0$. Since $T_p M = H
\oplus \mathbb{R} \xi$, the lemma is proved once we show that if $v \in H$
satisfies $A_{kj} v^j = 0$ then $v = 0$. So let $v \in H$ satisfy $A_{kj}
v^j = 0$. Choose any other $w \in H$. A glance at the expression for $A$
shows that $A_{kj} w^k v^j = g_{kj} w^k v^j$. We thus have that $v$ is $g$%
-perpendicular to all of $H$. The non-degeneracy of $g$ restricted to $H$
implies that $v = 0$.
\end{proof}

\begin{remark}
The fact that $A_{kj} (\xi) \xi^j=0$ follows directly from the
parameterization invariance of $\int F$. Indeed, whenever $\gamma(t)$ is an
extremal and $\lambda (t)$ is a reparameterization of the time interval, $%
\gamma (\lambda (t))$ is an extremal. Differentiating, we find that if $%
\xi^i, \eta^j$ are the 2-jets of a solution, then so is $\tilde \xi ^i =
\alpha \xi^i, \tilde \eta^j = \alpha ^2 \eta^j + \beta \xi^i$ where $\alpha
= \dot \lambda$, $\beta = \ddot \lambda$. Now $\xi^i, \eta^j$ satisfy 
$A_{kj}
(\xi) \eta^j - b_k (\xi) = 0$, as does $\tilde \xi^i, \tilde \eta^j$. A bit
of algebra, combined with the fact that $A$ and $b$ are homogeneous of
degree $0$, $2$, in $\xi$ yields $A_{kj} \xi^j = 0$.
\end{remark}

\begin{proposition}
In terms of the fiber coordinates $(\xi, \eta)$ we have 
\begin{equation*}
\mathcal{E}_p =\{(\xi ,\eta )\in \mathbb{R}^{n}\times \mathbb{R}^{n}\ |\
\omega _{l}\xi ^{l}\neq 0, A_{kj}(\xi)\eta ^{j}=b_{k}(\xi)\}.
\end{equation*}
\end{proposition}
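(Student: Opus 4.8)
The plan is to prove the two claimed inclusions separately, putting essentially all the work into the non-trivial one. The computation preceding the statement already shows that, after substituting the $2$-jet $(\xi,\eta)$ of a curve germ into the left-hand side of \eqref{EL} via \eqref{EL-formula} and evaluating the coefficients at $p$, that left-hand side becomes $(\omega_l\xi^l)^{-1}\,2\,(A_{kj}(\xi)\eta^j-b_k(\xi))$. For the inclusion $\mathcal{E}_p\subseteq\{(\xi,\eta)\mid \omega_l\xi^l\neq 0,\ A_{kj}(\xi)\eta^j=b_k(\xi)\}$ I would take any solution $c$ of \eqref{EL} with $c(0)=p$ and set $\xi=\dot c(0)$, $\eta=\ddot c(0)$. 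Since $c$ is a geodesic of the Kropina metric we have $\omega(\dot c)\neq 0$ everywhere, so $\omega_l\xi^l\neq 0$; and evaluating \eqref{EL} at $t=0$, where the prefactor $(\omega_l\xi^l)^{-1}2$ is nonzero, forces $A_{kj}(\xi)\eta^j=b_k(\xi)$. This direction is immediate from the already computed formula.

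The reverse inclusion is the substantive one, and the obstacle is that $A(\xi)$ is degenerate: by Lemma \ref{kernel}, $\ker A(\xi)=\mathbb{R}\xi$ whenever $\xi\notin H$. Thus the algebraic equation $A_{kj}(\xi)\eta^j=b_k(\xi)$ does not determine $\eta$; its solution set, if nonempty, is an affine line $\eta_0+\mathbb{R}\xi$. To realize every point of the algebraic set as a genuine $2$-jet I would first produce one solution curve through $p$ with velocity $\xi$. Given $(\xi,\eta)$ in the algebraic set we have $\xi\notin\ker\omega$, so Corollary \ref{geodesic} supplies a geodesic $\gamma$ of $F$ with $\gamma(0)=p$ and $\dot\gamma(0)=\xi$; writing $\eta_0=\ddot\gamma(0)$, the forward inclusion just proved gives $A_{kj}(\xi)\eta_0^j=b_k(\xi)$.

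It then remains to hit the prescribed $\eta$, and this is exactly where the kernel direction $\mathbb{R}\xi$ is consumed by reparameterization freedom. Subtracting the two relations gives $A_{kj}(\xi)(\eta^j-\eta_0^j)=0$, so by Lemma \ref{kernel} there is $s\in\mathbb{R}$ with $\eta=\eta_0+s\xi$. Since $F$ is positively homogeneous of degree one in $\xi$, the functional $\int F$ and hence its extremals are invariant under orientation-preserving reparameterization (as recorded in the Remark above). Choosing $\lambda$ with $\lambda(0)=0$, $\dot\lambda(0)=1$, $\ddot\lambda(0)=s$, the curve $\gamma\circ\lambda$ is again a solution of \eqref{EL}, still satisfies $(\gamma\circ\lambda)(0)=p$ and $\tfrac{d}{dt}(\gamma\circ\lambda)(0)=\xi$, while $\tfrac{d^2}{dt^2}(\gamma\circ\lambda)(0)=\ddot\lambda(0)\xi+\dot\lambda(0)^2\eta_0=\eta_0+s\xi=\eta$. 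Hence $(\xi,\eta)=(j^2(\gamma\circ\lambda))(0)\in\mathcal{E}_p$, completing the reverse inclusion. The main point to get right is this interplay: Lemma \ref{kernel} pins the indeterminacy of $\eta$ down to exactly the reparameterization direction, Corollary \ref{geodesic} guarantees one solution exists, and homogeneity lets reparameterization sweep out the full affine line of admissible second jets.
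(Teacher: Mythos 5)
Your proof is correct and follows essentially the same route as the paper: use Corollary \ref{geodesic} to produce one geodesic with velocity $\xi$, apply Lemma \ref{kernel} to see that the prescribed $\eta$ differs from its second jet by a multiple of $\xi$, and absorb that difference by an orientation-preserving reparameterization. The only difference is that you spell out the forward inclusion and the chain-rule computation explicitly, which the paper leaves implicit.
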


\begin{proof}
Denote the right-hand side by $\mathcal{E}^\prime$. We have $\mathcal{E}_p
\subset\mathcal{E}^\prime$ by the above computation of the Euler-Lagrange
equation. Conversely, suppose that $(\xi, \eta)\in\mathcal{E}^\prime$. By
Corollary \ref{geodesic}, there exists a geodesic $x(t)$ of $F$ such that $%
x(0)=p$, $\dot x(0)=\xi$. Then $(\xi, \ddot x(0))\in\mathcal{E}^\prime$, so by
Lemma \ref{ker-A}, we have $\eta=\ddot x(0)+c\, \xi$ for some constant $c$.
If we take a reparametrization $x^\prime(t)=x(\varphi(t))$ such that $%
\dot\varphi(0)=1$, $\ddot\varphi(0)=c$, then $({\dot x}^\prime(0), {\ddot x}%
^\prime(0))=(\xi, \eta)$, so we have $(\xi, \eta)\in\mathcal{E}_p$.
\end{proof}

We now proceed to show that the 2nd derivatives of solutions blow up as
their initial conditions approach $H = {\rm ker}\,\omega$. To this end, write $pr:
J^2 _p M \to T_p M$ for the canonical projection and set 
\begin{equation*}
\ell_{\xi }:=\mathcal{E}_p\cap pr^{-1}(\xi ).
\end{equation*}
By Lemma \ref{kernel}, for $\xi \not\in H$ we have that $\ell_{\xi }$ is an
affine line whose direction is $\xi$. The line $\ell_{\xi}$ lives in $J^2 _p M$
which is diffeomorphic to $\mathbb{R}^n \times \mathbb{R}^n$. We now imagine 
$\xi$ varying so that $\xi \to \xi_0$ We will say that $\ell_{\xi} \to \infty$
as $\xi \to \xi_0$, if, as $\xi \to \xi_0$, the line $\ell_{\xi}$ leaves every
compact set of $J^2 _p M$.

\begin{lemma}
\label{infinity}  There is a Zariski dense open subset $H {\setminus 
\mathcal{N}}$ of $H$ such that for all $\xi_0$ in this set we have $%
\ell_{\xi} \to \infty$ whenever $\xi \to \xi_0$.
\end{lemma}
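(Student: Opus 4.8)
The plan is to control the affine line $\ell_\xi$ by analyzing the singular behaviour of the defining equation $A_{kj}(\xi)\eta^j=b_k(\xi)$ as $s:=\omega_l\xi^l\to 0$. Since $\ker A(\xi)=\mathbb{R}\xi$ by Lemma \ref{kernel} and $\xi\to\xi_0$ stays bounded and nonzero, the line escapes every compact set precisely when its component transverse to $\xi$ blows up. To make this transverse component concrete I would fix a vector $T$ with $\omega(T)=1$, decompose $T_pM=H\oplus\mathbb{R}T$, and write each $\eta\in\ell_\xi$ as $\eta=\eta_H+\tau T$ with $\eta_H\in H$ and $\tau=\omega(\eta)$. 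Because $\omega(\xi)=s\neq 0$, the scalar $\tau$ is an affine coordinate along $\ell_\xi$, and $\eta_H$ is then pinned down by $\tau$ through the non-degenerate form $g|_H$.

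First I would pair the equation $A(\xi)\eta=b(\xi)$ with an arbitrary $w\in H$. Using $\omega(w)=0$, the expression \eqref{A-b} for $A$ collapses to $A_{kj}(\xi)w^k=g_{kj}w^k-s^{-1}g(\xi,w)\omega_j$, so the left-hand side becomes $g(w,\eta_H)-s^{-1}g(\xi_H,w)\tau$ after substituting $\xi=\xi_H+sT$ and cancelling the $g(w,T)\tau$ terms. On the right, the $s^{-2}$ term of $b$ and the term carrying $\omega_k(\partial_m g_{ij})$ both contain the factor $\omega_k w^k=0$ and drop out, while the survivors assemble into $\beta_0(w)+s^{-1}\bigl[g(\xi,w)(\partial_m\omega_l)\xi^m\xi^l+\tfrac12 g(\xi,\xi)\,d\omega(\xi,w)\bigr]$, where $\beta_0(w)$ stays bounded as $s\to 0$. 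Thus the $H$-component of the equation reads
\[
g(w,\eta_H)=\beta_0(w)+s^{-1}\Phi_\xi(w),\qquad \Phi_\xi(w):=g(\xi_H,w)\tau+g(\xi,w)(\partial_m\omega_l)\xi^m\xi^l+\tfrac12 g(\xi,\xi)\,d\omega(\xi,w).
\]
Since $g|_H$ is non-degenerate, $\eta_H$ can stay bounded along a sequence $\xi\to\xi_0$ with $\tau\to\tau_0$ only if $s^{-1}\Phi_\xi(w)$ stays bounded for every $w\in H$, which forces the limiting functional
\[
\Phi_{\xi_0,\tau_0}(w)=g(\xi_0,w)\bigl[\tau_0+(\partial_m\omega_l)\xi_0^m\xi_0^l\bigr]+\tfrac12 g(\xi_0,\xi_0)\,d\omega(\xi_0,w)
\]
to vanish identically on $H$.

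Contrapositively, $\ell_\xi\to\infty$ whenever no real $\tau_0$ makes $\Phi_{\xi_0,\tau_0}|_H$ vanish. When $g(\xi_0,\xi_0)\neq 0$, vanishing for some $\tau_0$ is equivalent to the proportionality $d\omega(\xi_0,\cdot)|_H\in\mathbb{R}\cdot g(\xi_0,\cdot)|_H$ (the free scalar being supplied by $\tau_0$), and $g(\xi_0,\cdot)|_H\neq 0$ for $\xi_0\neq 0$. I would therefore let $\mathcal{N}$ be the union of the null cone $\{g(\xi_0,\xi_0)=0\}$ with the locus where $d\omega(\xi_0,\cdot)|_H$ and $g(\xi_0,\cdot)|_H$ are linearly dependent; the latter is cut out by the vanishing of the $2\times 2$ minors of these two functionals, so both conditions are algebraic in $\xi_0$ and $\mathcal{N}$ is Zariski closed. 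By construction $\ell_\xi\to\infty$ for every $\xi_0\in H\setminus\mathcal{N}$.

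The heart of the argument, and the step I expect to be the main obstacle, is to show that $\mathcal{N}$ is a \emph{proper} subvariety, since only here does the hypothesis $\omega\wedge d\omega\neq 0$ enter. Using $g|_H$ to identify $H$ with $H^\ast$, I would write $d\omega(v,\cdot)|_H=g(\Omega v,\cdot)$ for a unique endomorphism $\Omega$ of $H$; antisymmetry of $d\omega$ makes $\Omega$ skew-adjoint for $g|_H$. If $d\omega(\xi_0,\cdot)|_H$ and $g(\xi_0,\cdot)|_H$ were dependent for \emph{every} $\xi_0$, then $\Omega\xi_0\in\mathbb{R}\xi_0$ for all $\xi_0$, i.e.\ $\Omega$ would be a scalar multiple of the identity; but a $g$-skew-adjoint scalar must be $0$, forcing $d\omega|_H\equiv 0$ and contradicting non-integrability. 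Hence the dependence locus, and with it $\mathcal{N}$, is a proper algebraic subset, so $H\setminus\mathcal{N}$ is the desired Zariski dense open set on which $\ell_\xi\to\infty$. The only genuinely technical point is the bookkeeping in the second paragraph isolating the $s^{-1}$ term of the $H$-projection of \eqref{A-b}; everything else is linear algebra driven by the non-degeneracy of $g|_H$ and the nonvanishing of $d\omega|_H$.
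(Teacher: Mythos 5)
Your proof is correct, and it takes a genuinely different route from the paper's. The paper first invokes Lemma \ref{modify} to make $g$ non-degenerate on all of $T_pM$, passes to $g$-normal coordinates, and solves the system explicitly: $\ell_\xi=\pi(b(\xi))+\mathbb{R}\xi$, with $\pi$ the $g$-orthogonal projection onto $H$; it then fixes an auxiliary Euclidean metric $\mathring g$ and shows that the point of $\ell_\xi$ closest to the origin (the ``optimal solution'' $\eta_*$ of \eqref{optimal-eta}) diverges, with exceptional set $\mathcal{N}=\{|\xi|_g^2=0\ \mathrm{or}\ (\xi\lrcorner d\omega)|_H=0\}$. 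You never solve the system: you test $A(\xi)\eta=b(\xi)$ against $w\in H$, isolate the part singular in $1/\omega(\xi)$, and argue contrapositively that a bounded solution along $\xi\to\xi_0$ forces your limiting functional $\Phi_{\xi_0,\tau_0}|_H$ to vanish --- an argument that works in arbitrary coordinates, needs no auxiliary metric, and uses only the non-degeneracy of $g|_H$, so Lemma \ref{modify} is not needed. Your exceptional set (null cone union the locus where $d\omega(\xi_0,\cdot)|_H$ and $g(\xi_0,\cdot)|_H$ are dependent) in fact coincides with the paper's: since $d\omega(\xi_0,\xi_0)=0$, a non-null $\xi_0$ with $d\omega(\xi_0,\cdot)|_H=\lambda\,g(\xi_0,\cdot)|_H$ forces $\lambda=0$, which is exactly the $c=0$ versus $c\neq 0$ dichotomy at the end of the paper's proof. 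What the paper's heavier computation buys is reusable output: the explicit rational function $\xi\mapsto\eta_*(\xi)$ is precisely what is used immediately afterwards, both to prove that projectively equivalent Kropina metrics have equal kernels (poles of $\eta_*$ detect $\mathrm{ker}\,\omega$) and to derive \eqref{limit}; your leaner argument, while fully sufficient for the lemma, would not by itself support those later steps. A small merit on your side: your skew-adjointness argument (the endomorphism $\Omega$ with every vector an eigenvector is scalar, hence zero) actually proves Zariski density of $H\setminus\mathcal{N}$, a point the paper dispatches with only the remark that $H$ is non-integrable.
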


Before giving the proof we describe the Zariski dense subset of the lemma.
According to Lemma \ref{modify}, we may assume that $g$ is non-degenerate.
We set 
\begin{equation*}
\mathcal{N}:=\{\xi \in T_{p}M\ |\ |\xi |_{g}^{2}=0\ \mathrm{or}\ (\xi
\lrcorner d\omega )|_H=0\}.
\end{equation*}%
Since $H $ is non-integrable, $H{\setminus \mathcal{N}}$ is a Zariski open
dense subset of $H$.

\begin{proof}
We will make use of the formula 
\begin{equation*}
\pi (v )=v -\frac{\omega (v)}{|\omega |_{g}^{2}}\omega _{g}
\end{equation*}
for the $g$-orthogonal projection $\pi :T_{p}M\rightarrow H$. (Note that $%
|\omega |_{g}^{2}\neq 0$ since if $|\omega |_{g}^{2}=0$, we have $\omega
_{g}:=(\omega ^{i})\in H = \mathrm{ker}\,\omega $ and $g(\omega _{g},v )=0$
for all $v \in H $, contradicting to the assumption that $g|_H$ is
non-degenerate.)

Since the assertion of the lemma is independent of coordinates, it suffices
to establish the lemma in one choice of coordinate systems. We will work in
the normal (=geodesic) coordinate system of $g$ centered at $p$. Recall that
these coordinates are specified by the condition that $\partial_k g_{ij}=0$
at $p$ for all $k,i,j$. Let $\nabla$ be the Levi-Civita connection of $g$.
We define 
\begin{align*}
(\nabla\omega)(\xi_1, \xi_2)&:=(\nabla_i \omega_j)\xi_1^i\xi_2^j , \\
(d\omega)_g(\xi)^i&:=\frac{1}{2}(\nabla_j\omega^i-\nabla^i\omega_j)\xi^j.
\end{align*}
Then $g((d\omega)_g(\xi_1), \xi_2)=d\omega(\xi_1, \xi_2)$. In these
coordinates the endomorphism $A=(A_k{}^j)$ and the vector $b=(b^i)$ are
written as 
\begin{equation}  \label{A-b-g}
\begin{aligned} A\eta&=\eta-\frac{\omega(\eta)}{\omega(\xi)}\xi-\frac{g(\xi,
\eta)}{\omega(\xi)}\omega_g+
\frac{|\xi|_g^2\omega(\eta)}{\omega(\xi)^2}\omega_g, \\
b&=\frac{(\nabla\omega)(\xi,
\xi)}{\omega(\xi)}\xi+\frac{|\xi|_g^2}{\omega(\xi)}(d\omega)_g(\xi)
-\frac{|\xi|_g^2 (\nabla\omega)(\xi, \xi)}{\omega(\xi)^2}\omega_g. 
\end{aligned}
\end{equation}
Since $g(\xi, b)=0$, we have 
\begin{equation*}
g(\xi, \pi(b))=-\frac{\omega(\xi)\omega(b)}{|\omega|^2_g}.
\end{equation*}
Then, it follows that 
\begin{align*}
A\pi(b)&=\pi(b)-\frac{g(\xi, \pi(b))}{\omega(\xi)}\omega_g \\
&=\pi(b)+\frac{\omega(b)}{|\omega|_g^2}\omega_g \\
&=b.
\end{align*}
Hence the solution space to $ A(\xi) \eta=b(\xi)$ is 
$ \pi(b(\xi))+\mathbb{R}\xi$. 
In other words 
\begin{equation*}
\ell_{\xi} =  \pi(b(\xi))+\mathbb{R}\xi
\end{equation*}
as a subspace of the affine $\mathbb{R}^n$ coordinatized by $\eta$.

Now choose any Euclidean structure $\mathring{g}$ on this $\mathbb{R}^n$.
There is a unique point $\eta_* \in \ell_{\xi}$ closest to the origin
relative to this metric:  $|\eta_*|_{\mathring g}=\mathrm{dist}_{\mathring
g}(0, \ell_\xi)$. We call this point the \textit{optimal solution}. Our proof
will be complete by showing that $|\eta_*|_{\mathring g} \to \infty$ as $\xi
\to \xi_0 \in H {\setminus {\mathcal{N}}}$.

Since 
\begin{align*}
\pi(b)&= 
\frac{|\xi|_g^2}{\omega(\xi)} \pi((d\omega)_g(\xi))
+\frac{(\nabla\omega)(\xi, \xi)}{\omega(\xi)}\pi(\xi)  \\
&=\frac{|\xi|_g^2}{\omega(\xi)} \pi((d\omega)_g(\xi)) 
+\frac{(\nabla\omega)(\xi, \xi)}{\omega(\xi)}\xi
-\frac{(\nabla\omega)(\xi, \xi)}{|\omega|_g^2}\omega_g,
\end{align*}
we have 
\begin{equation}  \label{optimal-eta}
\begin{aligned}  \eta_* &=\pi(b)-\frac{\mathring g(\xi,
\pi(b))}{|\xi|_{\mathring g}^2}\xi \\ &=\frac{|\xi|^2_g}{\omega(\xi)}
\Bigl(\pi(d\omega)_g(\xi)-\frac{\mathring g(\xi,
\pi(d\omega)_g(\xi))}{|\xi|^2_{\mathring g}}\xi\Bigr) \\ &\quad\quad
-\frac{(\nabla\omega)(\xi, \xi)}{|\omega|_g^2}
\Bigl(\omega_g-\frac{\mathring g(\xi, \omega_g)}{|\xi|_{\mathring
g}^2}\xi\Bigr). \end{aligned}
\end{equation}
The second term is bounded as $\xi\to\xi_0$. To see that the first term
diverges, we will show that 
\begin{equation*}
\pi(d\omega)_g(\xi_0)-\frac{\mathring g(\xi_0, \pi(d\omega)_g(\xi_0))}{%
|\xi_0|^2_{\mathring g}}\xi_0\neq0.
\end{equation*}
Suppose the equality holds. Then, if 
\begin{equation*}
c:=\frac{\mathring g(\xi_0, \pi(d\omega)_g(\xi_0))}{|\xi_0|^2_{\mathring g}}%
=0,
\end{equation*}
we have $\pi(d\omega)_g(\xi_0)=0$. Taking the inner product of $%
\pi(d\omega)_g(\xi_0)$ with any $v \in H$ yields $d \omega (v, \xi_0) = 0$
which contradicts the assumption $\xi_0\not\in\mathcal{N}$. On the other
hand, if $c\neq0$, we have $\pi(d\omega)_g(\xi_0) = c \xi_0$ and taking $g$%
-inner products of both sides of this equation with $\xi_0$ yields $%
d\omega(\xi_0, \xi_0)=c g( \xi_0, \xi_0)$,or $0 = |\xi_0 |_g ^2$ which again
contradicts $\xi_0\not\in\mathcal{N}$. Thus we have $|\eta_*|_{\mathring
g}\to\infty$.
\end{proof}

We note that the optimal solution $\eta_*$ is represented by a rational
function of $\xi$ in any coordinate system $(x^1, \dots , x^n)$, and it is
regular on $T_p M {\setminus H}$.

\begin{corollary}
\label{cor:1} Under the assumptions of this section, there is no affine
connection $\nabla$ on $M$ whose geodesics agree with the $F$-geodesics.
\end{corollary}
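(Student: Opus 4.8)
The plan is to argue by contradiction, playing the blow-up of the $F$-geodesic $2$-jets from Lemma \ref{infinity} against the polynomial — hence bounded — behaviour of the $2$-jets of any affine connection. So suppose, for contradiction, that there is an affine connection $\nabla=(\Gamma_{ij}{}^k)$ whose (unparameterized) geodesics agree with the $F$-geodesics. Fix $p\in M$ and work in the fiber $J^2_p M$ with the coordinates $(\xi,\eta)$ introduced above.

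First I would describe the fibers of the jet-variety of $\nabla$. An unparameterized geodesic of $\nabla$ satisfies $\ddot x^k+\Gamma_{ij}{}^k\dot x^i\dot x^j=\lambda\,\dot x^k$ for some function $\lambda$, so a geodesic through $p$ with velocity $\xi$ has $2$-jet of the form $\eta^k=-\Gamma_{ij}{}^k(p)\xi^i\xi^j+c\,\xi^k$ with $c\in\mathbb{R}$ arbitrary. Writing $\Gamma(\xi,\xi)^k:=\Gamma_{ij}{}^k(p)\xi^i\xi^j$, this says that the fiber of $\nabla$-jets over $\xi$ is the affine line $-\Gamma(\xi,\xi)+\mathbb{R}\xi$. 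The crucial feature is that this line depends on $\xi$ through the homogeneous quadratic polynomial $\Gamma(\xi,\xi)$, which is continuous in $\xi$ and therefore stays bounded as $\xi$ approaches any $\xi_0\in H$.

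Next I would match these lines to the $F$-geodesic lines $\ell_\xi$ for $\xi\notin H$. Since the two families of unparameterized geodesics coincide, every $F$-geodesic through $p$ with velocity $\xi\notin H$ is a reparameterization of a $\nabla$-geodesic, so $\ell_\xi\subseteq-\Gamma(\xi,\xi)+\mathbb{R}\xi$. By Lemma \ref{kernel} the left-hand side is an affine line whose direction is $\mathbb{R}\xi$, which is also the direction of the right-hand side; two affine lines in the same fiber sharing a direction, one contained in the other, must be equal, so $\ell_\xi=-\Gamma(\xi,\xi)+\mathbb{R}\xi$ for all $\xi\notin H$. This translation from the phrase ``geodesics agree'' to an identity of fibers is the step one must be careful about, and is the main (if modest) obstacle: one has to rule out the a priori possibility of mere containment, which Lemma \ref{kernel} does by fixing the direction of $\ell_\xi$.

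Finally I would extract the contradiction from the distance-to-origin of these lines. Fix the auxiliary Euclidean structure $\mathring g$ on the $\eta$-fiber as in the proof of Lemma \ref{infinity}, and let $\eta_*(\xi)\in\ell_\xi$ be the optimal solution, i.e. the point of $\ell_\xi$ closest to the origin. On the one hand, since $-\Gamma(\xi,\xi)$ lies on $\ell_\xi$, we have $|\eta_*(\xi)|_{\mathring g}\le|\Gamma(\xi,\xi)|_{\mathring g}$, and the right-hand side remains bounded as $\xi\to\xi_0$ for every $\xi_0\in H$. On the other hand, Lemma \ref{infinity} provides some $\xi_0\in H\setminus\mathcal{N}$ for which $|\eta_*(\xi)|_{\mathring g}\to\infty$ as $\xi\to\xi_0$. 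These two conclusions are incompatible, which completes the proof. (Note the consistency with Theorem \ref{connection}: there the existence of such a $\nabla$ requires $\omega$ to be closed, whereas here the standing hypothesis $\omega\wedge d\omega\ne0$ is precisely what forces the blow-up and hence the non-existence.)
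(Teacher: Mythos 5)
Your proof is correct and follows essentially the same route as the paper: both compare the affine line of $\nabla$-geodesic $2$-jets, $-\Gamma(\xi,\xi)+\mathbb{R}\xi$, which stays bounded as $\xi\to\xi_0\in H\setminus\mathcal{N}$ because $\Gamma(\xi,\xi)$ is polynomial in $\xi$, against the Kropina lines $\ell_\xi$, which blow up by Lemma \ref{infinity}. Your extra care in upgrading ``geodesics agree'' to the fiberwise identity $\ell_\xi=\ell^\nabla_\xi$ via Lemma \ref{kernel} is a point the paper passes over more quickly, but the argument is the same.
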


\begin{proof}
Let $\Gamma^i _{jk}$ be the Christoffel symbols of an affine connection $%
\nabla$ defined near $p$ so that the $\nabla$-geodesic equations read $\ddot
x^i + \Gamma^i _{jk} \dot x^j\dot x^k = 0$. Let ${\mathcal{F}}_p$ be the
space of 2nd jets of unparameterized geodesics through $p$. It will suffice
to show that ${\mathcal{F}}_p \ne {\mathcal{E}}_p$ where ${\mathcal{E}}_p$
is the space of 2nd jets of unparameterized Kropina geodesics, as before. In
parallel to what we did just above,  $\ell ^{\nabla} _{\xi} = pr^{-1}(\xi )
\cap {\mathcal{F}}_p$ be the affine line in $J^2 M_p$ corresponding to all
the (unparameterized) $\nabla$-geodesics whose first derivative at $p$ is $%
\xi$. Then $\tilde \ell_{\xi}$ consists of the affine line parameterized as $%
\eta(\lambda) = \eta_0 (\xi) +\lambda \xi$ where $\eta_0 ^i (\xi) = -
\Gamma^i _{jk} \xi^j \xi^k$, and where the parameter $\lambda$ of the line
arises by reparameterizing $\nabla$-geodesics and can be identified with the
second derivative of the time reparameterization. Since $\eta_0 (\xi) \to
\infty$ if and only if $\xi \to \infty$ we see that the lines $\tilde
\ell_{\xi}$ cannot go to infinity as $\xi \to \xi_0 \in H {\setminus 
\mathcal{N}}$. It follows that these two sets of lines disagree in some
neighborhood of $H$ and hence that ${\mathcal{E}}_p \ne {\mathcal{F}}_p$.
\end{proof}

\begin{proposition}
Let $F=g/\omega$, $\widehat F=\widehat g/\widehat \omega$ be two Kropina
metrics on $M$ such that

\begin{itemize}
\item $\mathrm{ker}\,\omega$ is non-integrable at every point;

\item $g, \widehat g$ are non-degenerate on $\mathrm{ker}\,\omega$, $\mathrm{%
ker}\,\widehat\omega$ respectively.
\end{itemize}

If $F$ and $\widehat F$ are projectively equivalent, then $\mathrm{ker}%
\,\omega=\mathrm{ker}\,\widehat\omega$. In particular, $%
\mathrm{ker}\,\widehat\omega$ is also non-integrable at every point.
\end{proposition}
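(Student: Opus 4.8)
The plan is to work pointwise at an arbitrary $p\in M$ and to compare the affine lines of second jets that the two metrics produce in each direction. Write $\ell_\xi=\mathcal E_p\cap pr^{-1}(\xi)$ and $\widehat\ell_\xi=\widehat{\mathcal E}_p\cap pr^{-1}(\xi)$ for the second-jet lines of $F$ and $\widehat F$ in the direction $\xi$, and set $H=\mathrm{ker}\,\omega$, $\widehat H=\mathrm{ker}\,\widehat\omega$. By the definition of projective equivalence there is a nonempty open set $\Omega_p\subset T_pM$ of directions, each tangent at $p$ to a curve that is a geodesic of both $F$ and $\widehat F$. Since $F$ and $\widehat F$ are homogeneous of degree one, their Euler--Lagrange equations are reparameterization invariant, so any parameterization of such a common geodesic has a second jet lying simultaneously in $\mathcal E_p$ and $\widehat{\mathcal E}_p$; after shrinking $\Omega_p$ to avoid the proper closed set $H\cup\widehat H$, this jet lies in $\ell_\xi\cap\widehat\ell_\xi$. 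By Lemma \ref{kernel} both $\ell_\xi$ and $\widehat\ell_\xi$ are affine lines with the same direction $\mathbb{R}\xi$, and two parallel lines that meet coincide, so $\ell_\xi=\widehat\ell_\xi$ for every $\xi\in\Omega_p$.

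Next I would upgrade this equality from the open cone $\Omega_p$ to a generic set of directions. By Lemma \ref{modify}, applied to each metric near $p$, I may assume $g$ and $\widehat g$ are non-degenerate, which leaves the respective geodesics and hence $\mathcal E_p,\widehat{\mathcal E}_p$ unchanged. As noted after the proof of Lemma \ref{infinity}, the optimal solutions $\eta_*(\xi)$ and $\widehat\eta_*(\xi)$ pinning down the base points of $\ell_\xi$ and $\widehat\ell_\xi$ are then rational functions of $\xi$, regular on $T_pM\setminus H$ and $T_pM\setminus\widehat H$ respectively. The equality $\ell_\xi=\widehat\ell_\xi$ is equivalent to $\eta_*(\xi)-\widehat\eta_*(\xi)\in\mathbb{R}\xi$, i.e.\ to the vanishing of the minors $(\eta_*-\widehat\eta_*)^i\xi^j-(\eta_*-\widehat\eta_*)^j\xi^i$; clearing denominators, these become polynomials in $\xi$. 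Vanishing on the nonempty open set $\Omega_p$ forces them to vanish identically, so $\ell_\xi=\widehat\ell_\xi$ for all $\xi\in T_pM\setminus(H\cup\widehat H)$.

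The contradiction then comes from the blow-up Lemma \ref{infinity}. Suppose $H\neq\widehat H$. As both are hyperplanes, $H\not\subset\widehat H$; since $H\setminus\mathcal N$ is Zariski dense in $H$ while $H\cap\widehat H$ is a proper subspace of $H$, I can choose $\xi_0\in H$ with $\xi_0\notin\mathcal N$ and $\xi_0\notin\widehat H$. Letting $\xi\to\xi_0$ with $\xi\notin H$ (which automatically keeps $\xi\notin\widehat H$ in a small neighborhood of $\xi_0$), Lemma \ref{infinity} gives $\ell_\xi\to\infty$, whereas the regularity of $\widehat\eta_*$ at $\xi_0\notin\widehat H$ shows that $\widehat\ell_\xi$ converges to the fixed line $\widehat\ell_{\xi_0}$ and hence does \emph{not} leave every compact set. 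This contradicts $\ell_\xi=\widehat\ell_\xi$. Therefore $H\subseteq\widehat H$, and equality of dimensions gives $H=\widehat H$; since $H$ is non-integrable, so is $\widehat H$.

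The main obstacle is the passage carried out in the second paragraph. The hypothesis supplies common geodesics only in an open cone of directions, while the feature that distinguishes $\mathrm{ker}\,\omega$ --- the divergence of Lemma \ref{infinity} --- lives on the boundary locus $H$, which $\Omega_p$ need not approach. The rationality of the optimal solution is exactly what bridges this gap, letting me continue the identity $\ell_\xi=\widehat\ell_\xi$ off $\Omega_p$ all the way to generic points of $H$. One must also arrange the genericity of $\xi_0$ to avoid $\mathcal N$ (so Lemma \ref{infinity} applies to $F$) and simultaneously avoid $\widehat H$ (so $\widehat\eta_*$ stays regular for $\widehat F$), which is possible precisely because we have assumed $H\neq\widehat H$.
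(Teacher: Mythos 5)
Your proposal is correct and takes essentially the same route as the paper's proof: reduce to non-degenerate metrics via Lemma \ref{modify}, show the second-jet lines of the two metrics agree on an open cone of directions, extend this identity by rationality of the optimal solutions, and invoke the blow-up of Lemma \ref{infinity} to force $\mathrm{ker}\,\omega=\mathrm{ker}\,\widehat\omega$. The only difference is cosmetic: the paper concludes by noting that the coinciding rational functions $\eta_*$ and $\widehat\eta_*$ must have coinciding pole loci (whose closures are the two kernels), whereas you reach the same contradiction at a single generic $\xi_0\in H\setminus(\mathcal{N}\cup\widehat H)$.
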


\begin{proof}
By Lemma \ref{modify}, we may assume that $g, \widehat g$ are
pseudo-Riemannian. Suppose that there exists a point $p\in M$ with $\mathrm{%
ker}\,\omega_p\neq\mathrm{ker}\,\widehat\omega_p$. We consider the subsets $%
\mathcal{E}, \widehat{\mathcal{E}}\subset \mathcal{A}$ defined from
solutions to the Euler-Lagrange equations of $F, \widehat F$. Choose a
Euclidean structure $\mathring g$ on $T_p M$ and a coordinate system $(x^1,
\dots, x^n)$, and let $\eta_*, \widehat\eta_*$ be the optimal solutions. By
the assumption, there is a nonempty open subset $U\subset\mathbb{R}^n$ such
that $\mathcal{E}\cap (U\times\mathbb{R}^n)=\widehat{\mathcal{E}}\cap
(U\times\mathbb{R}^n)$. Since $\eta_*(\xi)=\widehat\eta_*(\xi)$ on $U$ and
both are rational functions of $\xi$, they coincide globally. In particular,
their poles must coincide. But we know that the closure of the locus of
poles of $\eta_*$ is $\mathrm{ker}\,\omega$. Then, the closure of the locus
of poles of $\widehat\eta_*$ is $\mathrm{ker}\,\omega$. Hence the two
kernels are equal, and in particular the distribution $\mathrm{ker}%
\,\widehat\omega$ is also non-integrable.
\end{proof}

Next we will show that $g$ and $\widehat{g}$ are conformally related on $%
\mathrm{ker}\,\omega =\mathrm{ker}\,\widehat{\omega }$. We consider $\eta
_{\ast },\widehat{\eta }_{\ast }$ in normal coordinates of $g,\widehat{g}$,
and identify them with elements of $T_{p}M$. Then, by \eqref{fiber} we have 
\begin{equation*}
\widehat{\eta }_{\ast }=\eta _{\ast }+O(1)
\end{equation*}%
as $\xi \rightarrow \xi _{0}\in \mathrm{ker}\,\omega {\setminus (\mathcal{N}%
\cup \widehat{\mathcal{N}})}$. It follows from \eqref{optimal-eta} that 
\begin{equation}  \label{limit}
\begin{aligned} &|\xi_0|_{\widehat g}^2\widehat\pi(d\omega)_{\widehat
g}(\xi_0)-\frac{|\xi_0|_{\widehat g}^2}{|\xi_0|_{\mathring g}^2} \mathring
g(\xi_0, \widehat\pi(d\omega)_{\widehat g}(\xi_0))\xi_0 \\
&=|\xi_0|_g^2\pi(d\omega)_g(\xi_0)-\frac{|\xi_0|_g^2}{|\xi_0|_{\mathring
g}^2} \mathring g(\xi_0, \pi(d\omega)_g(\xi_0))\xi_0. \end{aligned}
\end{equation}%
Let us denote the restrictions of $\mathring{g},g,\widehat{g},d\omega $ to $%
\mathrm{ker}\,\omega $ respectively by 
\begin{equation*}
\mathring{h}_{\alpha \beta },\quad h_{\alpha \beta },\quad \widehat{h}%
_{\alpha \beta },\quad B_{\alpha \beta },
\end{equation*}%
where Greek indices run from 1 to $n-1$. Note that $B_{\alpha \beta }$ is
skew symmetric and $(B_{\alpha \beta })\neq 0$ by non-integrability. Since %
\eqref{limit} holds for any $\xi _{0}\in \mathrm{ker}\,\omega {\setminus (%
\mathcal{N}\cup \widehat{\mathcal{N}})}$, we have 
\begin{equation}
\begin{aligned} &\mathring h_{(\sigma\rho}\widehat h_{\alpha\beta}\widehat
h^{\mu\nu}B_{\gamma)\nu} -\widehat h_{(\sigma\rho}\mathring
h_{\alpha|\nu|}\widehat h^{\nu\tau} B_{\beta|\tau|}\delta_{\gamma)}{}^\mu \\
&=\mathring h_{(\sigma\rho} h_{\alpha\beta} h^{\mu\nu}B_{\gamma)\nu} -
h_{(\sigma\rho}\mathring h_{\alpha|\nu|}
h^{\nu\tau}B_{\beta|\tau|}\delta_{\gamma)}{}^\mu, \end{aligned}
\label{index}
\end{equation}%
where $(\cdots )$ denotes the symmetrization over the indices $\sigma ,\rho
,\alpha ,\beta ,\gamma $. Taking the trace of $\mu ,\gamma $ gives 
\begin{equation*}
-(n+1)\widehat{h}_{(\sigma \rho }\mathring{h}_{\alpha |\nu |}\widehat{h}%
^{\nu \tau }B_{\beta )\tau }=-(n+1)h_{(\sigma \rho }\mathring{h}_{\alpha
|\nu |}h^{\nu \tau }B_{\beta )\tau }.
\end{equation*}%
Thus we have 
\begin{equation*}
\widehat{h}_{(\sigma \rho }\mathring{h}_{\alpha |\nu |}\widehat{h}^{\nu \tau
}B_{\beta |\tau |}\delta _{\gamma )}{}^{\mu }=h_{(\sigma \rho }\mathring{h}%
_{\alpha |\nu |}h^{\nu \tau }B_{\beta |\tau |}\delta _{\gamma )}{}^{\mu }
\end{equation*}%
and hence, by \eqref{index}, 
\begin{equation}
(\widehat{h}_{\alpha \beta }\xi ^{\alpha }\xi ^{\beta })\widehat{h}^{\mu \nu
}B_{\gamma \nu }\xi ^{\gamma }=(h_{\alpha \beta }\xi ^{\alpha }\xi ^{\beta
})h^{\mu \nu }B_{\gamma \nu }\xi ^{\gamma }  \label{conformal}
\end{equation}%
for all $\xi \in \mathrm{ker}\,\omega $.

First, we consider the case $n\ge4$. Looking at a nonzero row in the matrix 
$\widehat h^{\mu\nu}B_{\gamma\nu}$, we have 
\begin{equation*}
(\widehat h_{\alpha\beta}\xi^\alpha\xi^\beta)a_\gamma \xi^\gamma
=(h_{\alpha\beta}\xi^\alpha\xi^\beta)b_\gamma \xi^\gamma
\end{equation*}
for some $(a_\gamma)\neq0$ and $(b_\gamma)\neq0$. We regard this as an
equality in the polynomial ring $\mathbb{R}[\xi^1, \dots, \xi^{n-1}]$. Since 
$a_\gamma\xi^\gamma$ is irreducible, it divides either $h_{\alpha\beta}\xi^%
\alpha\xi^\beta$ or $b_\gamma \xi^\gamma$. In the former case, we have 
\begin{equation*}
h_{\alpha\beta}=a_{(\alpha} c_{\beta)}
\end{equation*}
for some $(c_\gamma)$. Since $\dim\mathrm{ker}\,\omega\ge3$, there exists $%
\xi_0\in\mathrm{ker}\,\omega{\setminus\{0\}}$ such that $a_\gamma\xi_0^%
\gamma=c_{\gamma}\xi_0^\gamma=0$, and hence $h_{\alpha\beta}\xi_0^\beta=0$.
This is contradiction since $h_{\alpha\beta}$ is non-degenerate. In the
latter case, we have $b_\gamma=c a_\gamma$ for some constant $c$ and $%
\widehat h_{\alpha\beta}=ch_{\alpha\beta}$.

Next, let $n=3$. In this case, since $\dim\mathrm{ker}\,\omega=2$,
non-integrability condition is equivalent to the contact condition and $B$
is non-degenerate. Then, the equation \eqref{conformal} implies that $%
(\widehat h^{-1}B)(h^{-1}B)^{-1}$ is a scalar multiple of identity, since it
is represented by a diagonal matrix in any basis of $\mathrm{ker}\,\omega$.
Thus $\widehat h$ is a multiple of $h$.

As a result
\begin{equation*}
\widehat{g}=\alpha g+2\beta \cdot \omega
\end{equation*}%
with a function $\alpha $ and a 1-form $\beta $.

Thus we complete the proof of Theorem \ref{thm:2} if we prove the following
proposition:

\begin{proposition}
Let $F=g/\omega ,\widehat{F}=\widehat{g}/\omega $ be two projectively
equivalent Kropina metrics on $M$. Suppose that there exist a function $%
\alpha $ and a 1-form $\beta $ such that $\widehat{g}=\alpha g+2\beta \cdot
\omega $. Then $\alpha $ is constant and $\beta $ is closed.
\end{proposition}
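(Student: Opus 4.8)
The plan is to exploit the linear structure relating the two metrics. Since $\mathrm{ker}\,\omega=\mathrm{ker}\,\widehat\omega$ with the same defining form $\omega$, the hypothesis $\widehat g=\alpha g+2\beta\cdot\omega$ gives, as functions on $TM$,
\[
\widehat F=\alpha F+2\beta,
\]
where I write $\beta$ also for the fibrewise-linear function $\xi\mapsto\beta_i\xi^i$. Both $F$ and $\widehat F$ are positively homogeneous of degree $1$ in $\xi$. The first point to record is that for any degree-one homogeneous Lagrangian $L$ the Euler--Lagrange expression $E_k(L):=\frac{d}{dt}\frac{\partial L}{\partial\xi^k}-\frac{\partial L}{\partial x^k}$ transforms under a reparameterization $t\mapsto s$ as $E_k(L)[\widetilde\gamma](s)=\dot t(s)\,E_k(L)[\gamma](t(s))$, because the momentum $\partial L/\partial\xi^k$ is homogeneous of degree $0$. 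Hence the vanishing of $E_k(L)$ is reparameterization-invariant, and if a curve $\gamma$ is simultaneously an (unparameterized) geodesic of $F$ and of $\widehat F$, then in one and the same parameterization we have both $E_k(F)=0$ and $E_k(\widehat F)=0$ along $\gamma$.

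Next I would use linearity of $E_k$ together with $\widehat F=\alpha F+2\beta$. A direct computation, using $E_k(\beta)=(d\beta)_{ik}\xi^i$ with $(d\beta)_{ik}=\partial_i\beta_k-\partial_k\beta_i$, gives
\[
E_k(\widehat F)=\alpha\,E_k(F)+(\partial_i\alpha)\xi^i\,\frac{\partial F}{\partial\xi^k}-(\partial_k\alpha)F+2(d\beta)_{ik}\xi^i .
\]
Along a common geodesic both $E_k(F)$ and $E_k(\widehat F)$ vanish, so
\[
(\partial_i\alpha)\xi^i\,\frac{\partial F}{\partial\xi^k}-(\partial_k\alpha)F+2(d\beta)_{ik}\xi^i=0.\qquad(\star)
\]
By projective equivalence the directions $\xi$ of common geodesics fill a nonempty open cone in each $T_pM$, so $(\star)$ holds for $\xi$ in an open set, hence --- after clearing the denominators $\omega(\xi)$ --- as a polynomial identity in $\xi$ at each point $p$, with coefficients the values at $p$ of $g$, $\omega$, $d\alpha$ and $d\beta$. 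Note that no derivatives of $g$ survive, since they are absorbed in the identity $E_k(F)=0$.

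Finally I would run a divisibility argument in the polynomial ring $\mathbb{R}[\xi^1,\dots,\xi^n]$. Writing $W:=\omega_l\xi^l$, $Q:=g_{ij}\xi^i\xi^j$ and $P:=(\partial_i\alpha)\xi^i$ and substituting $\partial F/\partial\xi^k=2g_{kj}\xi^j/W-Q\omega_k/W^2$ and $F=Q/W$, multiplication of $(\star)$ by $W^2$ yields
\[
2Pg_{kj}\xi^jW-PQ\omega_k-(\partial_k\alpha)QW+2(d\beta)_{ik}\xi^iW^2=0.
\]
Every term except $-PQ\omega_k$ is divisible by $W$; the linear form $W$ is irreducible and does not divide $Q$ (otherwise $g$ restricted to $\mathrm{ker}\,\omega$ would be degenerate, contrary to hypothesis), so choosing an index $k$ with $\omega_k\neq0$ (which exists since $\omega\neq0$) gives $W\mid PQ$, whence $W\mid P$ and therefore $P=cW$, i.e. $d\alpha=c\,\omega$ at $p$ for a scalar $c$. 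Substituting $P=cW$, $\partial_k\alpha=c\omega_k$ and dividing by $2W$ reduces the identity to $W\bigl(c\,g_{kj}\xi^j+(d\beta)_{ik}\xi^i\bigr)=cQ\omega_k$; the same irreducibility argument forces $c=0$, whence $d\alpha=0$ and then $(d\beta)_{ik}\xi^i\equiv0$, i.e. $d\beta=0$. Thus $\alpha$ is constant and $\beta$ is closed.

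I expect the main obstacle to be the first step: justifying rigorously that a single parameterization can be chosen in which the Euler--Lagrange expressions of both metrics vanish simultaneously. This rests on the degree-one homogeneity, and the resulting covariance $E_k(L)[\widetilde\gamma]=\dot t\,E_k(L)[\gamma]$, rather than on any delicate analysis, but it is the conceptual crux; once $(\star)$ is in hand, the remaining divisibility computation is routine given the non-degeneracy of $g$ on $\mathrm{ker}\,\omega$.
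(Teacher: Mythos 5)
Your proof is correct, and its first half coincides with the paper's own argument: the paper, working with its 2-jet equations $A_{kj}\eta^j=b_k$ and $\widehat A_{kj}\eta^j=\widehat b_k$ (a formalism in which the reparameterization freedom you flag as the conceptual crux is already absorbed into the affine lines $\ell_\xi$, so that point is not really an obstacle), computes exactly the combination $2(\widehat A_{kj}\eta^j-\widehat b_k)-2\alpha(A_{kj}\eta^j-b_k)$, notes that $\eta$ drops out, and concludes that the resulting expression --- which is precisely your $(\star)$ multiplied by $\omega(\xi)$ --- must vanish identically in $\xi$. The genuine difference is in how this identity is then exploited. The paper multiplies by $\omega(\xi)$ once more and lets $\xi\to\xi_0\in\mathrm{ker}\,\omega$; only the term $-\omega_k|\xi_0|_g^2\,\xi_0^m\partial_m\alpha$ survives, so it obtains merely that $d\alpha$ annihilates $\mathrm{ker}\,\omega$, and it must then invoke the standing assumption of Section \ref{sec:kropina} that $\mathrm{ker}\,\omega$ is non-integrable (bracket-generating) to upgrade this to $\alpha$ being constant. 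Your divisibility argument in $\mathbb{R}[\xi^1,\dots,\xi^n]$ --- $W\mid PQ$ with $W$ irreducible and $W\nmid Q$ by non-degeneracy of $g$ on $\mathrm{ker}\,\omega$, giving $d\alpha=c\,\omega$ pointwise, then a second pass forcing $c=0$ --- never uses non-integrability. You therefore prove a slightly stronger statement: the conclusion holds for any two projectively equivalent Kropina metrics with the same $\omega$ and with $g$ non-degenerate on $\mathrm{ker}\,\omega$ (assuming $\dim M\geq 2$), including the integrable case governed by Theorem \ref{connection}, whereas the paper's proof of this proposition is tied to the non-integrable setting. Both proofs use the non-degeneracy hypothesis at the analogous spot (in the paper as $|\xi_0|_g^2\neq 0$ for generic $\xi_0\in\mathrm{ker}\,\omega$, in yours as $W\nmid Q$), and both conclude $d\beta=0$ in the same way once $\alpha$ is known to be constant.
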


\begin{proof}
We take local coordinates around a point $p\in M$ and consider the defining
equations of $\mathcal{E}$ and $\widehat{\mathcal{E}}$. Since $\widehat{g}%
_{ij}=\alpha g_{ij}+\beta _{i}\omega _{j}+\beta _{j}\omega _{i}$, a direct
computation using \eqref{A-b} gives 
\begin{align*}
 2(\widehat{A}_{kj}\eta ^{j}-\widehat{b}_{k})-2\alpha(A_{kj}\eta
^{j}-b_{k})& =2\omega _{l}\xi ^{l}(\partial _{m}\beta _{k}-\partial
_{k}\beta _{m})\xi ^{m} \\
& \quad +2(\partial _{m}\alpha )g_{kj}\xi ^{m}\xi ^{j}-(\partial _{k}\alpha
)g_{ij}\xi ^{i}\xi ^{j} \\
& \quad -(\omega _{l}\xi ^{l})^{-1}\omega _{k}(\partial _{m}\alpha
)g_{ij}\xi ^{m}\xi ^{i}\xi ^{j}.
\end{align*}%
By the assumption, the equations $ A\eta =b$ and 
$ \widehat{A}\eta =\widehat{b}$ have the same solutions, so the right-hand side of the above equation
must vanish for any $\xi $. Multiplying it by $\omega _{l}\xi ^{l}$ and
taking the limit $\xi \rightarrow \xi _{0}\in \mathrm{ker}\,\omega $, we
have 
\begin{equation*}
\omega _{k}|\xi _{0}|_{g}^{2}\,\xi _{0}^{m}(\partial _{m}\alpha )=0.
\end{equation*}%
Thus, the derivatives of $\alpha $ in the directions of $\mathrm{ker}%
\,\omega $ vanish at any point. Since $\mathrm{ker}\,\omega $ is
non-integrable, this implies that $\alpha $ is constant. Then it follows
that $\partial _{m}\beta _{k}-\partial _{k}\beta _{m}=0$, so $\beta $ is
closed.
\end{proof}

Finally we consider the case where $\omega$ is integrable, and prove Theorem %
\ref{connection}. In this case, we may assume that $\omega$ is closed by
multiplying $g$ and $\omega$ by a function. We note that in the
non-integrable case, geodesics of the Kropina metric are never given by
geodesics of an affine connection due to the singularity described in Lemma %
\ref{infinity}. In contrast, we have the following theorem:

\begin{theorem}
\label{thm:con} Let $g$ be a pseudo-Riemannian metric and $\omega$ a closed
1-form on $M$. Let $\nabla$ be the affine connection whose Christoffel
symbols are given by 
\begin{equation}\label{cn}
\Gamma_{ij}{}^k=\Gamma^g_{ij}{}^k+\frac{1}{ |\omega|_g^2}(\nabla^g_i\omega_j
)\omega^k,
\end{equation}
where $\nabla^g$ is the Levi-Civita connection of $g$ and $\Gamma^g_{ij}{}^k$
are its Christoffel symbols. Then, any geodesic of the Kropina metric $%
F=g/\omega$ is a geodesic of $\nabla$.
\end{theorem}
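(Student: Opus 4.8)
The plan is to reduce the statement to a pointwise comparison of second-order jets, using the explicit description of the Kropina jet line obtained in the proof of Lemma~\ref{infinity}. Since the conclusion is coordinate-independent and purely pointwise, I would fix an arbitrary $p\in M$ and work in $g$-normal coordinates centered at $p$, so that $\Gamma^g_{ij}{}^k(p)=0$ and the formulas \eqref{A-b-g} for $A$ and $b$ are at my disposal. First I would note that the connection \eqref{cn} only makes sense when $|\omega|_g^2\neq 0$, which (for $g$ non-degenerate on $TM$) is exactly the condition that $g$ be non-degenerate on $H=\ker\omega$; hence the machinery of this section applies, and, as established in the proof of Lemma~\ref{infinity}, the $F$-geodesics through $p$ have second-order jets filling the affine line $\ell_\xi=\pi(b(\xi))+\mathbb{R}\xi$, where $\pi$ is the $g$-orthogonal projection onto $H$.

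The key simplification comes from $\omega$ being closed. I would observe that then $(\nabla^g\omega)$ is symmetric, so the added term in \eqref{cn} is symmetric in $i,j$ and $\nabla$ is torsion-free, and moreover $(d\omega)_g(\xi)\equiv 0$, which annihilates the middle term of $b$ in \eqref{A-b-g}. Using $\pi(\omega_g)=0$ (immediate from $\omega(\omega_g)=|\omega|_g^2$) together with $\pi(\xi)=\xi-\tfrac{\omega(\xi)}{|\omega|_g^2}\omega_g$, I would then compute
\[
\pi(b)=\frac{(\nabla\omega)(\xi,\xi)}{\omega(\xi)}\,\xi-\frac{(\nabla\omega)(\xi,\xi)}{|\omega|_g^2}\,\omega_g\equiv-\frac{(\nabla\omega)(\xi,\xi)}{|\omega|_g^2}\,\omega_g \pmod{\mathbb{R}\xi}.
\]
On the other side, the unparameterized $\nabla$-geodesics through $p$ with velocity $\xi$ have second-order jets filling the line $\eta_0(\xi)+\mathbb{R}\xi$ with $\eta_0^k=-\Gamma_{ij}{}^k\xi^i\xi^j$; evaluating \eqref{cn} at $p$ in $g$-normal coordinates (where $\Gamma^g(p)=0$) gives exactly $\eta_0=-\tfrac{(\nabla\omega)(\xi,\xi)}{|\omega|_g^2}\omega_g$.

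Comparing the two expressions shows $\pi(b)-\eta_0\in\mathbb{R}\xi$, so the affine lines $\ell_\xi=\pi(b)+\mathbb{R}\xi$ and $\eta_0+\mathbb{R}\xi$ coincide for every $\xi$ with $\omega(\xi)\neq 0$. Thus at $p$ the set of second-order jets of $F$-geodesics equals the set of second-order jets of reparameterized $\nabla$-geodesics; since $p$ is arbitrary, every $F$-geodesic satisfies the unparameterized $\nabla$-geodesic equation at each of its points and is therefore a geodesic of $\nabla$. I expect the only delicate points to be bookkeeping: correctly reading off $\ell_\xi$ from the computation in the proof of Lemma~\ref{infinity}, keeping track of the raised-index dual $\omega_g$ and of $\pi$, and remembering that both geodesic notions are unparameterized, so that agreement only modulo $\mathbb{R}\xi$ is what must be verified. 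There is no analytic obstacle here, in contrast with the non-integrable case, precisely because closedness of $\omega$ removes the $(d\omega)_g$ term that caused the blow-up in Lemma~\ref{infinity}.
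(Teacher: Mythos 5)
Your proposal is correct and takes essentially the same route as the paper's proof: both work in $g$-normal coordinates, use the jet description $A\eta=b$ with $\pi(b)$ as a particular solution and $\ker A=\mathbb{R}\xi$, use closedness of $\omega$ to annihilate the $(d\omega)_g$ term, and observe that $-\frac{(\nabla^g\omega)(\xi,\xi)}{|\omega|_g^2}\omega_g$ (the $\nabla$-geodesic jet once $\Gamma^g(p)=0$) differs from $\pi(b)$ by a multiple of $\xi$. The only organizational difference is that the paper starts from a $\nabla$-geodesic and checks that its jet solves the Euler--Lagrange equation, with the stated direction then following from uniqueness of $F$-geodesics, whereas you compare the two affine jet-lines directly; the underlying computation is identical.
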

Note that the last term in \eqref{cn} is a $(1, 2)$ tensor field  symmetric in the lower indices so 
$\nabla$  is a torsion free affine connection.  
\begin{proof}
Let $\gamma(t)$ be a geodesic of $\nabla$. We will show that $\gamma$
satisfies the Euler-Lagrange equation \eqref{EL} for $F$. Take an arbitrary
point $p\in\gamma$. For simplicity, we let $p=\gamma(0)$. Take normal
coordinates $(x^1, \dots, x^n)$ of $g$ centered at $p$. Then, a curve $
(x^i(t))$ satisfies \eqref{EL} at $p$ if and only if $(\xi, \eta)=(\dot
x^i(0), \ddot x^i(0))$ satisfies $ A\eta=b$ with $A, b$ given by 
\eqref{A-b-g}. We recall that one of the solutions is given by 
$\eta= \pi (b)$, and using $d\omega=0$, we have 
\begin{equation*}
\pi(b)=\frac{(\nabla^g\omega)(\xi, \xi)}{\omega(\xi)}\xi-\frac{
(\nabla^g\omega)(\xi, \xi)}{|\omega|_g^2}\omega_g.
\end{equation*}
Since $\xi\in\mathrm{ker}\, A$, 
\begin{equation*}
-\frac{(\nabla^g\omega)(\xi, \xi)}{ |\omega|_g^2}\omega_g
\end{equation*}
is also a solution. On the other hand, the geodesic equation for $\gamma$
implies 
\begin{equation*}
\ddot\gamma(0)+\frac{(\nabla^g\omega)(\dot\gamma(0), \dot\gamma(0))}{
 |\omega|_g^2}\omega_g=0.
\end{equation*}
Hence $\gamma(t)$ solves \eqref{EL}.
\end{proof}

\section{Kropina metric for chains on CR manifolds}

\label{chain-Kropina}

\subsection{CR manifolds}

\label{CR} Let $M$ be a $C^{\infty }$-manifold of dimension $2n+1$. A 
\textit{CR structure} $(H,J)$ is a corank-1 distribution $H\subset TM$
together with a complex structure $J\in \Gamma (\mathrm{End}(H))$. We assume
that the CR structure satisfies the integrability condition that $\Gamma
(T^{1,0}M)$ is closed under Lie bracket, where $T^{1,0}M\subset \mathbb{C}H$
is the eigenspace of $J$ corresponding to the eigenvalue $i=\sqrt{-1}$. For
a choice of a 1-form $\theta $ with $\mathrm{ker}\,\theta=H$, we define the 
\textit{Levi form} by 
\begin{equation*}
h_{\theta }(X,Y):=d\theta (X,JY),\quad X,Y\in H.
\end{equation*}
We say the CR structure is \textit{non-degenerate} if its Levi form is
non-degenerate, and we assume  non-degeneracy hereafter. Then $H$ becomes
a contact distribution.

The \textit{Reeb vector field} of $\theta $ is the vector field $T$ on $M$
characterized by the two conditions: 
\begin{equation*}
\theta (T)=1,\quad T\lrcorner \,d\theta =0.
\end{equation*}%
Any local frame for $\mathbb{C}TM$ of the form $\{T,Z_{\alpha },Z_{%
\overline{\alpha }}:=\overline{Z_{\alpha }}\}$ where $\{Z_{\alpha }\}_{1\leq
\alpha \leq n}$ is a local frame for $T^{1,0}M$ is called  an 
\textit{admissible frame}. The dual coframe $\{\theta ,\theta ^{\alpha
},\theta ^{\overline{\alpha }}\}$ is called an \textit{admissible coframe}.
Then we have 
\begin{equation*}
d\theta =ih_{\alpha \overline{\beta }}\theta ^{\alpha }\wedge \theta ^{%
\overline{\beta }}\quad \mathrm{with}\ h_{\alpha \overline{\beta }%
}:=h_{\theta }(Z_{\alpha },Z_{\overline{\beta }}).
\end{equation*}%
Since $\theta$ is contact the matrix of the Levi form $h_{\alpha \overline{%
\beta }} $ is invertible and we can use its inverse $h^{\alpha \overline{%
\beta }}$ to raise indices. If we rescale the contact form as $\widehat{%
\theta }=e^{\Upsilon }\theta $ with $\Upsilon \in C^{\infty }(M)$, the Reeb
vector field transforms as 
\begin{equation*}
e^{\Upsilon }\widehat{T}=T-i\Upsilon ^{\alpha }Z_{\alpha }+i\Upsilon ^{%
\overline{\alpha }}Z_{\overline{\alpha }},
\end{equation*}%
and accordingly the admissible coframe $\{\widehat{\theta },\widehat{\theta }%
^{\alpha },\widehat{\theta }^{\overline{\alpha }}\}$ dual to $\{\widehat{T}%
,Z_{\alpha },Z_{\overline{\alpha }}\}$ satisfies 
\begin{equation}
\widehat{\theta }^{\alpha }=\theta ^{\alpha }+i\Upsilon ^{\alpha }\theta .
\label{admissible}
\end{equation}

A contact form $\theta $ defines a canonical linear connection $\nabla $ on $%
TM$, called the \textit{Tanaka-Webster} connection \cite{Tanaka, Webster}. This connection
preserves $T^{1,0}M$, satisfies $\nabla T=0$, $\nabla h_{\theta }=0$, and
the structure equation 
\begin{equation*}
d\theta ^{\alpha }=\theta ^{\beta }\wedge \omega _{\beta }{}^{\alpha }+A_{%
\overline{\beta }}{}^{\alpha }\theta \wedge \theta ^{\overline{\beta }},
\end{equation*}%
where $\omega _{\beta }{}^{\alpha }$ is the connection 1-form, and $A_{%
\overline{\beta }}{}^{\alpha }$ is a tensor called \textit{Tanaka-Webster
torsion}. If we set $\omega_{\overline\alpha}{}^{\overline\beta}:=\overline{\omega_\alpha{}^\beta}$ and $A_\beta{}^{\overline\alpha}:=\overline{A_{\overline\beta}{}^\alpha}$, then we have 
\[
dh_{\alpha\overline\beta}=\omega_{\alpha\overline\beta}+\omega_{\overline\beta\alpha}, \quad A_{\alpha\beta}=A_{\beta\alpha}.
\]
Taking the trace of the curvature form $d\omega _{\beta
}{}^{\alpha }-\omega _{\beta }{}^{\gamma }\wedge \omega _{\gamma }{}^{\alpha
}$, we obtain the \textit{Tanaka-Webster Ricci tensor} $R_{\beta \overline{%
\gamma }}$: 
\begin{equation}
d\omega _{\alpha }{}^{\alpha }=R_{\beta \overline{\gamma }}\theta ^{\beta
}\wedge \theta ^{\overline{\gamma }}+(\nabla ^{\alpha }A_{\alpha \beta
})\theta ^{\beta }\wedge \theta -(\nabla ^{\overline{\alpha }}A_{\overline{%
\gamma }\overline{\alpha }})\theta ^{\overline{\gamma }}\wedge \theta .
\label{ricci}
\end{equation}%
The \textit{Tanaka-Webster scalar curvature} is defined by $R:=R_{\beta
}{}^{\beta }$. There are transformation formulas for these quantities under
rescaling $\widehat{\theta }=e^{\Upsilon }\theta $. For example, $R$
transforms as 
\begin{equation*}
e^{\Upsilon }\widehat{R}=R+(n+1)\Delta _{b}\Upsilon -n(n+1)\Upsilon _{\alpha
}\Upsilon ^{\alpha },
\end{equation*}%
where $\Delta _{b}:=-\nabla _{\alpha }\nabla ^{\alpha }-\nabla ^{\alpha
}\nabla _{\alpha }$; see e.g., \cite{Lee1}.

\subsection{The Fefferman metric and chains}
Chains on CR manifolds are first introduced by \'E. Cartan \cite{Cartan} in dimension three by using the Cartan connection. In this paper, we define chains via Fefferman's conformal structure by following \cite{fefferman, Lee1}.

The \textit{CR canonical bundle} is the CR version of the canonical line bundle of
complex geometry. It is the complex line bundle over $M$ defined by 
\begin{equation*}
K_{M}:=\{\zeta \in \wedge ^{n+1}\mathbb{C}T^{\ast }M\ |\ \overline{Z}%
\lrcorner \,\zeta =0\ \mathrm{for\ any\ }Z\in T^{1,0}M\}.
\end{equation*}%
Then the \textit{Fefferman space} is defined to be the circle bundle 
\begin{equation*}
\mathcal{C}:=K_{M}^{\ast }/\mathbb{R}_{+},
\end{equation*}%
where $K_{M}^{\ast }=K_{M}\setminus \{\mathrm{zero\ section}\}$.  Fix a 
contact form $\theta$. Each choice of  admissible coframe $\{\theta
,\theta ^{\alpha },\theta ^{\overline{\alpha }}\}$ yields  a local section %
 $\zeta _{0}=[\theta \wedge \theta ^{1}\wedge \cdots \wedge \theta ^{n}]$ for %
$\mathcal{C}$. (Here $[.]$ means the equivalence class under $\mathbb{R}_{+}$) The corresponding fiber coordinate $s\in \mathbb{R}/2\pi 
\mathbb{Z}$ is defined by writing  $\zeta =e^{is}\zeta _{0}\in \mathcal{C}$.

Let $\omega _{\alpha }{}^{\beta }$ be the Tanaka-Webster connection 1-form
with respect to $\{\theta ^{\alpha }\}$. One can see that 
\begin{equation*}
\sigma :=\frac{1}{n+2}\Bigl(ds-\mathrm{Im}\,\omega _{\alpha }{}^{\alpha }-%
\frac{R}{2(n+1)}\theta \Bigr)
\end{equation*}%
is invariant under changes of $\{\theta ^{\alpha }\}$ and defines a global
1-form on $\mathcal{C}$. For another choice of contact form $\widehat{\theta 
}=e^{\Upsilon }\theta $, we have 
\begin{equation}
\widehat{\sigma }=\sigma +\frac{i}{2}(\Upsilon _{\alpha }\theta ^{\alpha
}-\Upsilon _{\overline{\alpha }}\theta ^{\overline{\alpha }})-\frac{1}{2}%
\Upsilon _{\alpha }\Upsilon ^{\alpha }\theta .  \label{sigma}
\end{equation}%
We define the \textit{Fefferman metric} on $\mathcal{C}$ as the
pseudo-Riemannian (Lorentzian if the Levi form $h_{\alpha \overline{\beta }}$
is positive definite) metric 
\begin{equation*}
G:=h_{\alpha \overline{\beta }}\theta ^{\alpha }\cdot \theta ^{\overline{%
\beta }}+2\theta \cdot \sigma .
\end{equation*}%
By using the transformation formulas \eqref{admissible}, \eqref{sigma}, one
has 
\begin{equation*}
\widehat{G}=e^{\Upsilon }G
\end{equation*}%
for $\widehat{\theta }=e^{\Upsilon }\theta $. Thus, the conformal class of
the Fefferman metric is independent of $\theta $. We set 
\begin{equation*}
K:=(n+2)S,
\end{equation*}
where $S$ is the infinitesimal generator of the circle action on $\mathcal{C}$.
 $K$ is a null Killing vector field on $\mathcal{C}$ satisfying $%
K\lrcorner\, G=\theta$.

\begin{definition}
\label{def:4.1} A \textit{chain} is a curve $\gamma (t)$ on $M$ which is the
projection of a null geodesic $\widetilde{\gamma}(t)$ for the Fefferman
metric $G$ which satisfies $G( K, \dot{\widetilde{\gamma}}(0))\neq 0$.
\end{definition}

Note that $\theta (\dot\gamma(t))$ $\neq $ $0$ follows from the definition.
Since null geodesics are conformally invariant, chains are CR invariant
curves on $M$; see e.g., \cite[P. 306]{BFG}.

\subsection{The Kropina metric for the Fefferman metric}

We apply our construction of the Kropina metric to the Fefferman metric.
Let $\zeta_0=\theta\wedge\theta^1\wedge\cdots\wedge\theta^n$ be the section
of $\mathcal{C}$ on an open subset $U\subset M$ defined by an admissible
coframe $\{\theta, \theta^\alpha, \theta^{\overline\alpha}\}$.  
Identify $U$ with the hypersurface $\{s=0\}\subset\mathcal{C}$ 
transverse to $K$. From  definition \eqref{Kropina-Lorentz}, the
corresponding Kropina metric is given by 
\begin{equation}  \label{Kropina-Fefferman}
F_{\theta, \zeta_0} =\frac{h_{\alpha\overline\beta}\,\theta^\alpha\cdot%
\theta^{\overline\beta}}{\theta} -\frac{2}{n+2}\Bigl(\mathrm{Im}\,\omega_%
\alpha{}^\alpha+\frac{R}{2(n+1)}\theta\Bigr).
\end{equation}
By Theorem \ref{fermat}, we have

\begin{theorem}
\label{thm:chainsK} Chains on a non-degenerate CR manifold $M$ are locally
the geodesics of the Kropina metric $F_{\theta ,\zeta _{0}}$ defined by %
\eqref{Kropina-Fefferman}.
\end{theorem}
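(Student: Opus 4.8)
The plan is to derive \eqref{Kropina-Fefferman} directly from the general formula \eqref{Kropina-Lorentz} by computing the restriction of the Fefferman metric $G$ and of the contravariant form $\omega = (K \lrcorner\, G)|_{TU}$ to the chosen hypersurface $U = \{s = 0\}$, and then invoke Theorem \ref{fermat} to conclude that chains (which are projections of null geodesics of $G$ by Definition \ref{def:4.1}) are exactly geodesics of the resulting Kropina metric. First I would record that the section $\zeta_0 = \theta \wedge \theta^1 \wedge \cdots \wedge \theta^n$ identifies $U$ with $\{s=0\}$, and that the coordinate $s$ provides a fiber coordinate playing the role of $x^0$ in the coordinate setup preceding Theorem \ref{fermat}, with $K = \partial/\partial s$ up to the normalization $K = (n+2)S$. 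The key computational input is the explicit form $G = h_{\alpha\overline\beta}\,\theta^\alpha \cdot \theta^{\overline\beta} + 2\,\theta \cdot \sigma$ together with the definition of $\sigma$, namely $\sigma = \tfrac{1}{n+2}\bigl(ds - \mathrm{Im}\,\omega_\alpha{}^\alpha - \tfrac{R}{2(n+1)}\theta\bigr)$.

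Next I would split $G$ into its ``$ds$'' part and its ``horizontal'' part. Since $K \lrcorner\, G = \theta$, the one-form $\omega$ appearing as the denominator in \eqref{Kropina-Lorentz} is exactly $\theta$ restricted to $U$; this matches the claim in the proof of Theorem \ref{thm:1} that $\omega$ is a contact form for $H$. The numerator $\widetilde g(\xi,\xi)$ is the pullback of $G$ to $U$, i.e. the value of $G$ on vectors tangent to $\{s=0\}$. Setting $ds = 0$ in the expression for $\sigma$ gives $\sigma|_{TU} = \tfrac{1}{n+2}\bigl(-\mathrm{Im}\,\omega_\alpha{}^\alpha - \tfrac{R}{2(n+1)}\theta\bigr)$, so that
\begin{equation*}
G|_{TU} = h_{\alpha\overline\beta}\,\theta^\alpha \cdot \theta^{\overline\beta} - \frac{2}{n+2}\,\theta \cdot \Bigl(\mathrm{Im}\,\omega_\alpha{}^\alpha + \frac{R}{2(n+1)}\theta\Bigr).
\end{equation*}
Dividing by the denominator $\omega = \theta$ then yields precisely \eqref{Kropina-Fefferman}, because the term $\theta \cdot \theta$ inside the parentheses survives as a multiple of $\theta$ after dividing one factor of $\theta$ out, matching the $\tfrac{R}{2(n+1)}\theta$ summand in the displayed Kropina metric. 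I would be careful about the symmetric-product conventions (the factors of $2$ and the meaning of $\theta \cdot \sigma$) so that the coefficients $\tfrac{2}{n+2}$ and $\tfrac{R}{2(n+1)}$ come out exactly as stated.

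Finally, having identified the Kropina metric associated to $G$ on $U$ as $F_{\theta,\zeta_0}$, I would apply the first half of Theorem \ref{fermat}: a null geodesic $\widetilde\gamma$ of $G$ with $G(K,\dot{\widetilde\gamma}(0)) \neq 0$ projects along the flow of $K$ to a geodesic of $F_{\theta,\zeta_0}$. By Definition \ref{def:4.1} these projections are exactly the chains, and the condition $G(K,\dot{\widetilde\gamma}(0)) \neq 0$ is precisely the transversality hypothesis in the definition of a chain; note $\theta(\dot\gamma) \neq 0$ is the condition $\omega(\xi) \neq 0$ under which $F$ is defined. The word ``locally'' in the statement reflects that the identification of $U$ with $\{s=0\}$ and the flow-projection are only defined on a neighborhood, and that changing the admissible coframe $\{\theta^\alpha\}$ changes the section $\zeta_0$; by the remark following Theorem \ref{thm:1} and the transformation behavior noted after \eqref{Kropina-Lorentz}, such a change alters $F$ only by adding a closed (indeed exact) one-form, so the geodesics, and hence the chains, are unaffected.

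The main obstacle I expect is purely bookkeeping: getting the normalization of $K$, the symmetric products ``$\cdot$'', and the division by $\theta$ to reproduce the exact numerical coefficients in \eqref{Kropina-Fefferman}, and confirming that the restriction $G|_{TU}$ really is computed by simply setting $ds = 0$ rather than by a more subtle pullback involving the non-horizontality of $U$. There is no deep difficulty once the coordinate setup of Theorem \ref{fermat} is aligned with the fiber coordinate $s$; the content of the theorem is essentially a restatement of Theorem \ref{fermat} specialized to the Fefferman metric.
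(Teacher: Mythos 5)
Your proposal is correct and takes essentially the same route as the paper: the paper likewise derives \eqref{Kropina-Fefferman} by applying the construction \eqref{Kropina-Lorentz} to the Fefferman metric on the slice $\{s=0\}$ (using $K\lrcorner\, G=\theta$ as the denominator and restricting $\sigma$ by setting $ds=0$ in the numerator), and then deduces the theorem from Theorem \ref{fermat} together with Definition \ref{def:4.1}. The only point worth making explicit is that the inclusion ``every Kropina geodesic is locally a chain'' uses the converse half of Theorem \ref{fermat}, not only the first half you invoke, which is exactly how the paper's one-line deduction works as well.
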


If we consider the corresponding admissible coframe \eqref{admissible} for
rescaled contact form $\widehat\theta=e^\Upsilon\theta$, we see that $%
\widehat\zeta_0=\zeta_0$. Thus, the hypersurface does not change and it
follows that 
\begin{equation*}
F_{\widehat\theta, \widehat\zeta_0}=F_{\theta, \zeta_0}.
\end{equation*}
On the other hand, if we take another admissible coframe $%
\theta^{\prime\alpha} =\theta^\beta V_\beta{}^\alpha$ for the same $\theta$,
we have $\zeta_0^\prime=|\det(V_\beta{}^\alpha)|^{-1}\det(V_\beta{}%
^\alpha)\zeta_0$ and 
\begin{equation*}
F_{\theta, \zeta_0^\prime}=F_{\theta, \zeta_0}+\frac{1}{n+2}d\,\mathrm{Im}%
\bigl(\log\det(V_\beta{}^\alpha)\bigr).
\end{equation*}
The change corresponds to the fact that we have different hypersurfaces in $%
\mathcal{C}$. This transformation formula implies that we can define a
global Kropina metric for chains if $c_1(T^{1,0}M)=0$ in $H^2(M; \mathbb{R})$%
. In particular, if $M$ admits a pseudo-Einstein contact form, we have a
global Kropina metric, which is described as follows.

We first recall the pseudo-Einstein condition for contact forms; we refer the reader to 
\cite{Lee2, CY, Hirachi} for detail.  A contact form $\theta$ is called \textit{pseudo-Einstein} if
the associated Tanaka-Webster connection satisfies 
\begin{equation}  \label{pseudo-Ein}
R_{\alpha\overline\beta}=\frac{1}{n}Rh_{\alpha\overline\beta}, \quad
\nabla_\alpha R=in\nabla^{\beta}A_{\alpha\beta}.
\end{equation}
When $n\ge2$, the first equation implies the second, and when $n=1$ the
first equation always holds. If $\theta$ is pseudo-Einstein, a new contact
form $\widehat\theta=e^{\Upsilon}\theta$ is pseudo-Einstein if and only if $%
\Upsilon$ is CR pluriharmonic, namely, 
\begin{equation*}
\nabla_\alpha\nabla_{\overline\beta}\Upsilon=\frac{1}{n}(\nabla_\gamma%
\nabla^{\gamma}\Upsilon) h_{\alpha\overline{\beta}}, \quad
\nabla_\alpha\nabla_\beta\nabla^\beta\Upsilon=-inA_{\alpha\beta}\Upsilon^%
\beta.
\end{equation*}
Again, when $n\ge2$ the first equation implies the second, and when $n=1$
the first equation always holds. When $M$ is embedded in $\mathbb{C}^{n+1}$,
Fefferman's defining function $\rho$, which gives an approximate solution to
the complex Monge-Amp\`ere equation \cite{fefferman}, defines a
pseudo-Einstein contact form $\theta=(\mathrm{Im}\, \partial \rho)|_{TM}$.

\begin{lemma}[{\protect\cite[Lemma 4.1]{Lee2}}]
\label{coframe} Let $\theta$ be a pseudo-Einstein contact form. Then, in a
neighborhood of any point on $M$, there exists an admissible coframe such
that $\omega_\alpha{}^\alpha=-(i/n) R\,\theta$.
\end{lemma}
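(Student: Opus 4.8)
The plan is to reduce the existence of the desired coframe to the closedness of a single complex-valued $1$-form and then to integrate it locally. Starting from an arbitrary admissible coframe $\{\theta,\theta^\alpha,\theta^{\overline\alpha}\}$ with trace connection form $\omega_\alpha{}^\alpha$, I would set $\mu := \omega_\alpha{}^\alpha + \tfrac{i}{n}R\,\theta$ and show that $d\mu = 0$. To this end I would compute $d\bigl(-\tfrac{i}{n}R\theta\bigr) = -\tfrac{i}{n}\,dR\wedge\theta - \tfrac{i}{n}R\,d\theta$ and compare it term by term with the right-hand side of the structure equation \eqref{ricci} for $d\omega_\alpha{}^\alpha$.

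For the curvature term, the first pseudo-Einstein equation $R_{\beta\overline\gamma}=\tfrac1n R\,h_{\beta\overline\gamma}$ in \eqref{pseudo-Ein}, together with $d\theta = i h_{\alpha\overline\beta}\theta^\alpha\wedge\theta^{\overline\beta}$, gives $R_{\beta\overline\gamma}\theta^\beta\wedge\theta^{\overline\gamma} = -\tfrac{i}{n}R\,d\theta$, matching the $-\tfrac{i}{n}R\,d\theta$ term above. For the torsion terms I would expand $dR\wedge\theta = (\nabla_\beta R\,\theta^\beta + \nabla_{\overline\gamma}R\,\theta^{\overline\gamma})\wedge\theta$, since the $\theta$-component drops after wedging, and substitute the second pseudo-Einstein equation $\nabla_\alpha R = i n\,\nabla^\beta A_{\alpha\beta}$ and its conjugate. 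Tracking the factors of $i$ (and using $\overline{i}=-i$) yields $-\tfrac{i}{n}\nabla_\beta R = \nabla^\alpha A_{\alpha\beta}$ and $-\tfrac{i}{n}\nabla_{\overline\gamma}R = -\nabla^{\overline\alpha}A_{\overline\gamma\overline\alpha}$, which reproduce exactly the two torsion terms in \eqref{ricci}. Hence $d\omega_\alpha{}^\alpha = d\bigl(-\tfrac{i}{n}R\theta\bigr)$, so $\mu$ is closed.

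With $\mu$ closed, the Poincar\'e lemma provides, in a neighborhood of any given point, a complex-valued function $\psi$ with $d\psi = \mu$. I would then produce the required coframe by a frame change $\theta^{\prime\alpha} = \theta^\beta V_\beta{}^\alpha$, $V\in \mathrm{GL}(n,\mathbb{C})$, keeping $\theta$ fixed. Since the Tanaka-Webster connection depends only on $\theta$, its connection forms transform by the standard gauge rule, whose trace picks up the exact $1$-form $\operatorname{tr}(V^{-1}dV)=d\log\det V$, so $\omega^{\prime}_\alpha{}^\alpha = \omega_\alpha{}^\alpha + d\log\det V$. Choosing $V$ with $\det V = e^{-\psi}$ (for instance $V = e^{-\psi/n}\,\mathrm{Id}$, up to the sign convention in the gauge rule) gives $\omega^{\prime}_\alpha{}^\alpha = \omega_\alpha{}^\alpha - \mu = -\tfrac{i}{n}R\,\theta$, as desired.

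The only delicate point is the index computation establishing $d\mu = 0$: one must keep careful track of the factors of $i$ and of conjugation so that the curvature term and both torsion terms land with the correct signs. Everything else — the Poincar\'e lemma and the gauge transformation of the trace of a connection form — is standard, and the locality asserted in the statement is precisely the locality of the primitive $\psi$.
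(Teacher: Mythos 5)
Your proposal is correct and follows essentially the same route as the paper: show that $\omega_\alpha{}^\alpha+\tfrac{i}{n}R\,\theta$ is closed by combining the structure equation \eqref{ricci} with the pseudo-Einstein conditions \eqref{pseudo-Ein}, then absorb a local primitive via the gauge transformation of the trace connection form under a coframe change $\theta^{\prime\alpha}=\theta^\beta V_\beta{}^\alpha$. The only difference is cosmetic (you spell out the term-by-term verification of closedness and carry the opposite sign convention in $d\log\det V$, which you correctly flag as immaterial), so nothing further is needed.
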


\begin{proof}
By \eqref{ricci} and \eqref{pseudo-Ein}, we have $d(\omega_\alpha{}%
^\alpha+(i/n)R\,\theta)=0$ for any admissible coframe. Then, using the
transformation formula $\omega^\prime_\alpha{}^\alpha=\omega_\alpha{}%
^\alpha-d\log\det(V_\beta{}^\alpha)$ for the change of coframe $%
\theta^{\prime\alpha}=\theta^\beta V_\beta{}^\alpha$, we can construct a
coframe with $\omega_\alpha{}^\alpha+(i/n)R\,\theta=0$.
\end{proof}

We take a local section $\zeta$ of $\mathcal{C}$ defined via a coframe given
in Lemma \ref{coframe} around each point on $M$. Then the set $\{F_{\theta,
\zeta}\}$ defines a global Kropina metric 
\begin{equation}  \label{global-kropina}
F_\theta=\frac{h_{\alpha\overline\beta}\,\theta^\alpha\cdot\theta^{\overline%
\beta}}{\theta} +\frac{R}{n(n+1)}\theta.
\end{equation}
Note that the right-hand side is now defined invariantly for any admissible
coframe $\{\theta^\alpha\}$. If $\widehat\theta=e^\Upsilon\theta$ is another
pseudo-Einstein contact form, we have 
\begin{equation*}
F_{\widehat\theta}=F_\theta-i(\Upsilon_\alpha\theta^\alpha-\Upsilon_{%
\overline\alpha}\theta^{\overline\alpha})+\frac{1}{n}(\Delta_b\Upsilon)
\theta.
\end{equation*}
The CR pluriharmonicity of $\Upsilon$ implies that the 1-form in the
right-hand side is closed ({\cite[Lemma 3.2]{Hirachi}}). When $\theta=(%
\mathrm{Im}\, \partial \rho)|_{TM}$ with Fefferman's defining function $\rho$%
, we can also express the metric as $F_\theta=(\partial\overline\partial%
\rho)/\theta$, where we regard $\partial\overline\partial\rho$ as a
symmetric 2-tensor and restrict it to $M$. \ \newline

By the non-degeneracy of the CR structure, the Kropina metric $F_{\theta
,\zeta _{0}}$ satisfies the assumption of Theorem \ref{thm:2}. As an
immediate consequence of Corollary \ref{cor:1}, we have the following
theorem:

\begin{theorem}[{\protect\cite[Theorem 5.3]{CZ}}]
There is no affine connection on $M$ such that sufficiently big set of its
geodesics are chains of a certain CR structure.
\end{theorem}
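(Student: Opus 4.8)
The plan is to fix a non-degenerate CR structure $(H,J)$ on $M$, pass to the associated Kropina metric, and reduce to Corollary \ref{cor:1} by upgrading the ``sufficiently big'' hypothesis to a global statement via the rationality of the optimal solution. First I would take $F = F_{\theta,\zeta_0}$ from \eqref{Kropina-Fefferman}, whose geodesics are exactly the chains of $(H,J)$ by Theorem \ref{thm:chainsK}. Its denominator one-form is the contact form $\theta$, so $\ker\omega = H$ is non-integrable at every point, and its numerator restricts on $H$ to the Levi form, which is non-degenerate; hence $F$ meets the two standing hypotheses of Section \ref{sec:kropina}, and in particular Lemma \ref{infinity} and Corollary \ref{cor:1} are available.

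Next I would argue by contradiction: suppose $\nabla$ is an affine connection on $M$ such that a sufficiently big family of its unparameterized geodesics are chains, and fix a point $p$. By the definition of ``sufficiently big'' there is a nonempty open set $U \subset T_p M$ of directions $\xi$ for which the $\nabla$-geodesic through $p$ with velocity $\xi$ is a chain. For $\xi \notin H$ there is a unique chain and a unique $\nabla$-geodesic through $p$ tangent to $\xi$ (Corollary \ref{geodesic}), so these two unparameterized curves coincide for every $\xi \in U$. In the $2$-jet language of Section \ref{sec:kropina} this says that the affine lines $\ell^\nabla_\xi = pr^{-1}(\xi)\cap \mathcal{F}_p$ and $\ell_\xi = pr^{-1}(\xi)\cap \mathcal{E}_p$ coincide for all $\xi \in U$; fixing one Euclidean structure $\mathring g$ on $T_p M$, their optimal solutions (the points of the two lines closest to the origin) therefore agree on $U$ as well.

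Now I would invoke rationality, exactly as in the proof that projectively equivalent Kropina metrics have equal kernels. In fixed coordinates the Kropina optimal solution $\eta_*(\xi)$ is rational and regular on $T_p M \setminus H$, while for $\nabla$ the optimal solution is the rational map $\xi \mapsto \eta_0(\xi) - |\xi|_{\mathring g}^{-2}\,\mathring g(\xi,\eta_0(\xi))\,\xi$ with $\eta_0^i(\xi) = -\Gamma^i_{jk}\xi^j\xi^k$, which is regular on $T_p M \setminus \{0\}$. Two rational functions agreeing on the nonempty open set $U$ agree on their whole common domain, so the two optimal solutions coincide throughout $T_p M \setminus H$. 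Letting $\xi \to \xi_0 \in H \setminus \mathcal{N}$ now gives the contradiction: by Lemma \ref{infinity} the Kropina side $\eta_*(\xi)$ leaves every compact set, whereas the affine side stays bounded, since $\eta_0(\xi_0)$ is finite and $\xi_0 \neq 0$.

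The only genuine obstacle is this passage from agreement on a mere open set of directions---all that ``sufficiently big'' provides---to agreement on a full punctured neighborhood of $H$, where the blow-up of Lemma \ref{infinity} can be applied; the rationality and regularity of the optimal solution on $T_p M \setminus H$ is precisely what bridges this gap, and it is what allows the weak hypothesis used in Corollary \ref{chain-diffeo} to suffice here as well. Everything after the global agreement is the same singularity-along-$H$ comparison already carried out in Corollary \ref{cor:1}.
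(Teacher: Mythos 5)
Your proof is correct, and its skeleton is the same as the paper's: pass to the Kropina metric $F_{\theta,\zeta_0}$ of \eqref{Kropina-Fefferman}, check the standing hypotheses of Section \ref{sec:kropina} (contact kernel, Levi form non-degenerate on $H$), and contradict the blow-up of Lemma \ref{infinity} with the boundedness of affine-connection $2$-jets --- which is exactly the content of Corollary \ref{cor:1}. The paper's entire proof is the one-sentence reduction: the Kropina metric satisfies the assumptions of Section \ref{sec:kropina}, hence the theorem is ``an immediate consequence of Corollary \ref{cor:1}.'' Where you genuinely add something is in bridging the ``sufficiently big'' hypothesis: the proof of Corollary \ref{cor:1} only establishes that the jet-lines $\ell_{\xi}$ and $\ell^{\nabla}_{\xi}$ disagree for $\xi$ in a neighborhood of $H$, which rules out agreement of \emph{all} geodesics but does not by itself rule out agreement on an open cone of directions that stays away from $H$. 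Your observation that both optimal solutions are rational in $\xi$ --- the Kropina one regular on $T_pM\setminus H$, the affine one regular on $T_pM\setminus\{0\}$ --- so that agreement on the nonempty open set $U$ propagates to all of $T_pM\setminus H$, where Lemma \ref{infinity} then applies, is precisely the device the paper deploys in its Proposition showing that projectively equivalent Kropina metrics have equal kernels; the paper silently relies on the same propagation here. So your write-up follows the paper's route but is, if anything, more complete: it makes explicit the step that justifies the word ``immediate'' in the paper's citation of Corollary \ref{cor:1}.
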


We can also prove that a sufficiently big family of chains determines the CR
structure up to conjugacy. \ \newline

\textit{Proof of Corollary \ref{chain-diffeo}}

By Theorem \ref{thm:2}, the contact distributions coincide. Let $\theta$ be
a contact form for $H=\widehat{H}$. Then, the Kropina metrics $F,\widehat{F}$
defined by \eqref{Kropina-Fefferman} are related as $\widehat{F}=cF+\beta $
with a constant $c$ and a closed 1-form $\beta $. If we write $F=g/\theta ,%
\widehat{F}=\widehat{g}/\theta $, we have 
\begin{equation*}
\widehat{g}=cg+\beta \cdot \theta 
\end{equation*}
on $TM{\setminus H}$. By continuity, this also holds on $H$, so we obtain 
\begin{equation*}
d\theta (X,\widehat{J}Y)=c\,d\theta (X,JY),\quad X,Y\in H,
\end{equation*}
which implies $\widehat{J}=cJ$. Since $J$ and $\widehat{J}$ are complex
structures, we have $c=\pm1$. \qed

  \subsection{Pseudo-Einstein contact forms  for the Burns-Shnider example.  } \label{BS}

Corollary  \ref{cr-connect-thm} says that on a connected, compact strictly pseudoconvex CR 
manifold which admits a pseudo-Einstein contact form   with positive 
Tanaka-Webster scalar curvature,   any two points can be joined by a chain.   As we mentioned in the introduction,  in the  
Burns-Shnider's example of a compact, spherical, strictly pseudoconvex CR 
manifold not every two points can be joined by a chain. It follows that  the Tanaka-Webster scalar curvature with respect to the  canonically 
constructed pseudo-Einstein contact form fails to  be strictly positive. 

Let us   examine this contact form to  see how the positivity fails. We
fix a real number $1\neq r>0$ and define an action of $\mathbb{Z}$ on the
Heisenberg group $H=\mathbb{C}^{n}\times \mathbb{R}$ by $m\cdot
(z,t):=(r^{m}z,r^{2m}t)$. The example is given by the compact quotient $%
S^{2n}\times S_{(r)}^{1}:=(H{\setminus \{(0,0)\}})/\mathbb{Z}$. Let 
\begin{equation*}
\theta _{0}:=dt+i\sum_{\alpha =1}^{n}(z^{\alpha }dz^{\overline{\alpha }}-z^{%
\overline{\alpha }}dz^{\alpha })
\end{equation*}%
be the standard contact form on $H$. We set $\theta :=\rho ^{-2}\theta _{0}$%
, where $\rho :=(|z|^{4}+t^{2})^{1/4}$ is the Heisenberg norm. Then since $%
\log \rho $ is CR pluriharmonic, $\theta $ is pseudo-Einstein, and is
invariant under the action of $\mathbb{Z}$, so descends to a pseuo-Einstein
contact form on $S^{2n}\times S_{(r)}^{1}$. We set 
\begin{equation*}
Z_{\alpha }:=\frac{\partial }{\partial z^{\alpha }}+iz^{\overline{\alpha }}%
\frac{\partial }{\partial t}
\end{equation*}%
so that $\{Z_{\alpha }\}$ is a frame for $T^{1,0}H$. The Levi form for $%
\theta _{0}$ with respect to this frame is given by $2\delta _{\alpha 
\overline{\beta }}$. By using the transformation formula in \cite{Lee2}, one
computes the Tanaka-Webster scalar curvature for $\theta $ as 
\begin{align*}
R& =(n+1)\rho ^{2}\delta ^{\alpha \overline{\beta }}\bigl((Z_{\alpha }Z_{%
\overline{\beta }}+Z_{\overline{\beta }}Z_{\alpha })\log \rho -2n(Z_{\alpha
}\log \rho )(Z_{\overline{\beta }}\log \rho )\bigr) \\
& =\frac{n(n+1)|z|^{2}}{2\rho ^{2}}.
\end{align*}%
This is nonnegative, but vanishes on the circle $\{(0,t)\ |\ t\in \mathbb{R}%
^{\ast }\}/\mathbb{Z}$. Thus, the 2-tensor $g$ in the associated Kropina
metric $F=g/\theta $ (see \eqref{global-kropina}) is not globally positive
definite.

On the other hand, observe that $S^{2n}\times S_{(r)}^{1}$ is spherical and
has positive CR Yamabe constant $\lambda = \lambda(n,r)$ less than 
the one for the standard CR sphere of dimension $2n+1$. By a theorem of Jerison and Lee (\cite{JL}), we
can find a contact form (called the Yamabe contact form) with Tanaka-Webster scalar
curvature equal to $\lambda$. From Corollary \ref%
{cr-connect-thm} and the chain nonconnectivity of $S^{2n}\times S_{(r)}^{1}$,
this Yamabe contact form cannot be pseudo-Einstein.

\section{Indicatrix  and Proof of Theorem \ref{connect-thm}}

\label{indicatrix}

Following  the standard usage of Finsler geometry,   the 
\textit{indicatrix} of the Kropina metric $F= g/\omega$ of $M$ at the point $x
\in M$ is  the closure of the locus   $\{\xi\in T_x M: F(x,
\xi) = 1 \} $.    As we will see shortly,
the indicatrix   is a  conic  passing through the zero vector.

Figure \ref%
{fig:indicatrix} shows the  relation between the indicatrix and the lightcone of the corresponding
``Fefferman metric'' $\tilde g = g + 2 \omega dx^0$. This lightcone at a point $\tilde x
\in \widetilde{M}$ over $x$ is the quadric $\tilde g_{\tilde x} (v,v) = 0$
within $T_{\tilde x} \widetilde{M}$. (Since $K$ is Killing, this quadric is
independent of choice of lift $\tilde x$ of $x$.) Projectivize the light
cone to obtain a projective quadric within $\mathbb{P} (T_{\tilde x} 
\widetilde{M})$. View the projectivized quadric in the affine
chart $dx^0 = -1/2$, so that its equation becomes $\tilde g = g -
\omega =0$ which is the equation for   the indicatrix of the Kropina metric.

\begin{figure}[ht]
\scalebox{0.4}{\includegraphics{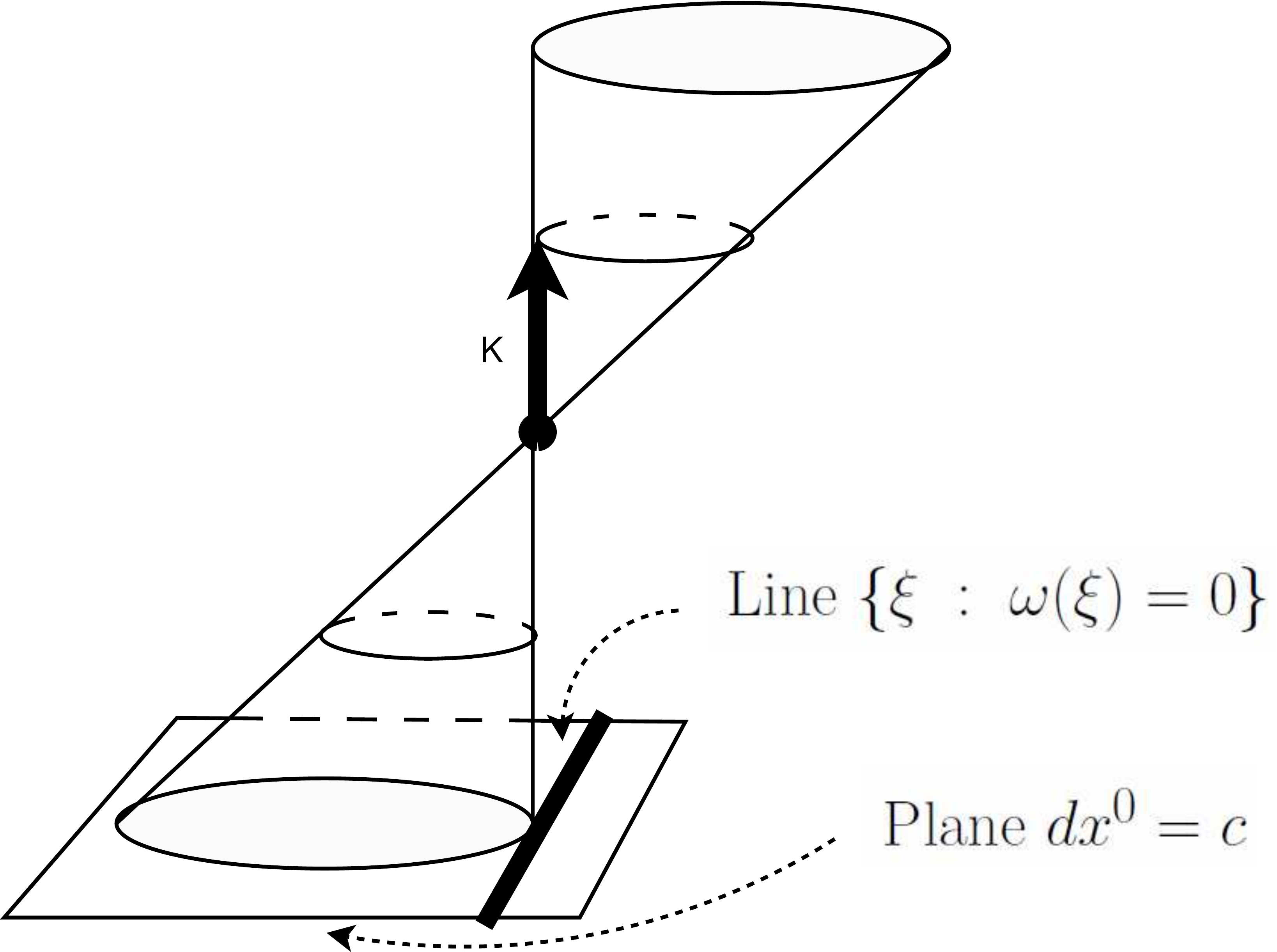}}
\caption{The indicatrix corresponds to the lightcone of the Fefferman
metric. The fact that 0 lies on the indicatrix corresponds to the fact that
the Killing vector $K$ is lightlike.}
\label{fig:indicatrix}
\end{figure}

Direct calculations show that the indicatrix at $x$  is given by 
\begin{equation*}
g_x ( v- W(x) , v- W(x))   = g_x (W(x), W(x)), 
\end{equation*}
where $W(x)$ is  the half of the vector field obtained
by raising the indices of $\omega$ using $g$:  
\begin{equation*}
W^i = \frac{1}{2} g^{ij} \omega_j.
\end{equation*}
If $g$ is positive definite we see that the indicatrix is the   $g$-sphere  of radius $|W(x)| = \sqrt{g_x (W(x),W(x))}$ centered at $W(x)$ within $T_x M$.

Let us   begin    the proof of Theorem \ref{connect-thm}.  We assume that $g/\omega$ is a Kropina metric on a connected $M$ 
with $g$ positive
definite.  We assume that  
$\omega\wedge  d\omega \not= 0$ at every  point $p$ of some  
 connected everywhere dense subset of $M$.

  We  say that an absolutely continuous    curve $c$ on $M$ is {\it admissible},  if  we have   $F(c(t),c'(t))\ge 0$ for almost all $t$.  We allow $F(c(t),c'(t))=\infty$.
 The {\it length} of an   admissible curve $c$ is given 
 by   $ L(c)= \int_c F(c, c')  $.  This length    can be  infinity.  The length is invariant under orientation-preserving re-parameterizations  of the curve.
  
 Define the {\it distance} $d(p,q)$ 
   of two points  $p, q\in M$ 
   to be   the infinum of lengths of all admissible curves connecting $p$ to $q$.   
  In view of  the Chow-Rashevsky theorem  (see \cite{Chow} or \cite[\S 1]{Gromov}),    the  condition $\omega\wedge  d\omega \not= 0$  on a connected everywhere dense subset  implies that  any point $p$  can be  joined to any point $q$ by a regular smooth  curve such that $0<F(c(t), c'(t))\ne \infty $
     for all $t$. This  guarantees  that  the distance function is well defined and finite. This distance clearly  satisfies the triangle inequality.  
      Due to the compactness of 
     the indicatrix,  
     the distance is   positive  for $p\ne q$.   
     However, this distance  is not symmetric in $p$ and  $q$.

  By the  $r-${\it ball}  $B_r(p)$ around $p$ we understand the set $B_r(p):= \{p\in M \ : \ d(p,q)\le r\}$. For any point $p$  and  for 
  small positive $r$  the ball  $B_r(p)$ is  compact.   
By a standard argument (see e.g. \cite[\S 2.5]{BBI} or \cite[\S 6.6]{Bao}),  if $B_r(p)$ is compact,  then for any point $q$ in $B_r(p)$ we have  the existence of 
an arc-length parameterized minimal geodesic $\gamma$  connecting $p$ to $q$.  By definition, this is  a curve $\gamma$  such that
for any two times  $t_1\le t_2$ in its domain we have   $t_2-t_1= d(\gamma(t_1), \gamma(t_2))$.  This  condition 
 implies that the length of $\gamma$ is    the distance from $p$ to $q$ and that for any point $\gamma(t)\ne q$ we have $d(p, \gamma(t))<d(p,q)$.   
 
 Note that though  \cite{BBI} generally  considers  symmetric distance functions (when $d(p,q)= d(q,p)$), symmetry  is not used in  \cite[\S 2.5]{BBI} so the existence of an arc-length parameterized minimal geodesic is also established in our situation.

  If we knew that arc-length parameterized  minimal geodesics were Kropina geodesics as defined in the introduction, 
  we would have proven (A) of the theorem already.  But we do not know this yet.  According to  our original  definition in the introduction,  a Kropina geodesic   must be smooth and  satisfy  $\omega(\gamma') \ne 0$ everywhere,    while an arc-length parameterized minimal geodesic
  may have points  at which  $\gamma'$ is not defined or  $\omega(\gamma') =0$.    Clearly, every arc-length parameterized minimal 
   geodesic  such that $\gamma'(t)$  exists and  satisfies $\omega(\gamma'(t))\ne 0$ for all $t$ 
   is a Kropina geodesic along which   $F(\gamma') \equiv 1>0$.

\begin{figure}[ht]
\scalebox{0.4}{\includegraphics{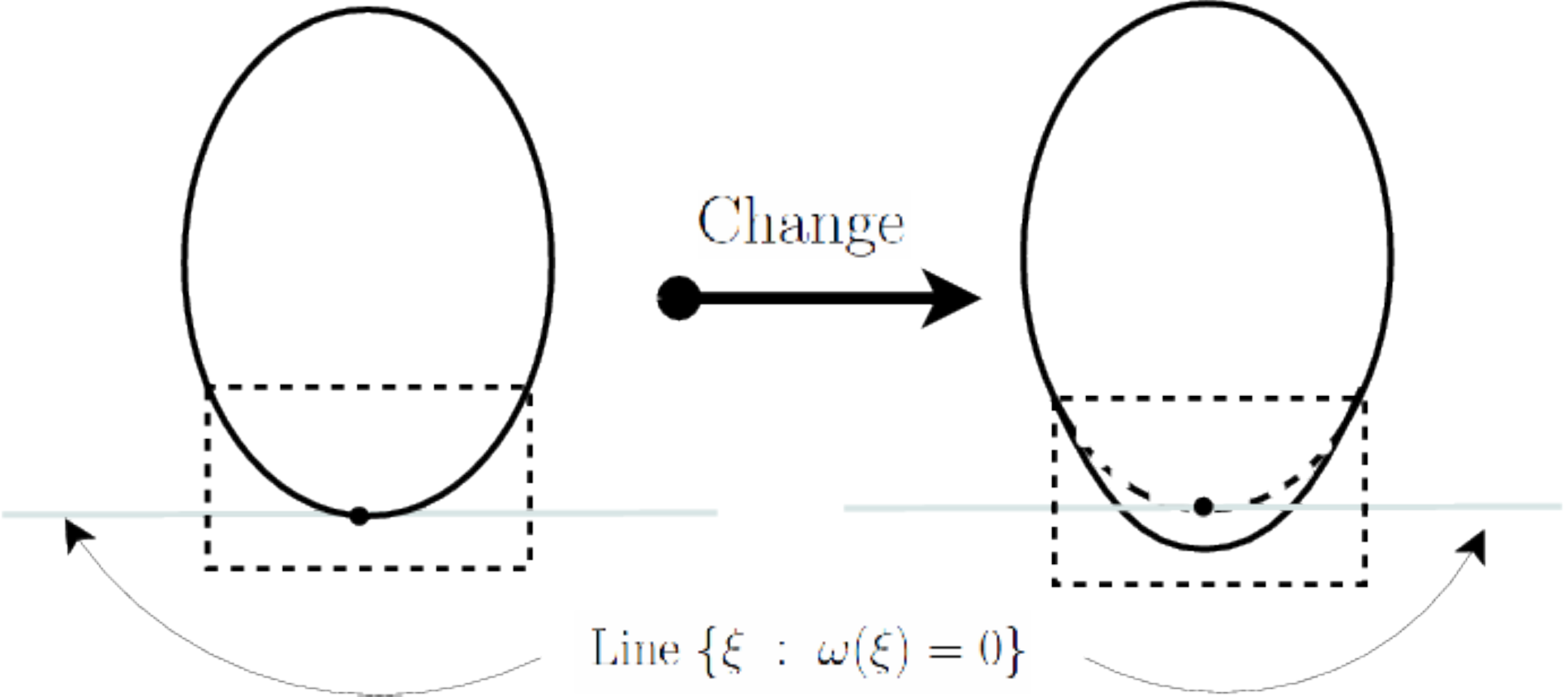}}
\caption{Change of the indicatrix of a Kropina metric such that the result is  the indicatrix of  a Finsler metric. The change is done only in a small box around zero so the Finsler metric   coincides with  the Kropina metric on vectors   with     
$F(v)=1$ and $ \omega(v)>\delta$   for some small  $\delta>0$.}
\label{fig:indicatrix_change}
\end{figure}
 
In order to  prove   that any  arc-length parameterized minimal geodesic  $\gamma$  is  a  Kropina geodesic, 
  observe that for any $\delta>0$ we  can find a smooth (but irreversible: $\tilde F(x,-v) \ne \tilde F(x,v)$)  Finsler metric $\tilde F$  
  such that  $\tilde F = 1$  agrees with $F$  on the locus  $\omega(v)>\delta$, and such that  $\tilde F \le F$ whenever
  $\omega (v) > 0$, see figure \ref{fig:indicatrix_change}.

 By the standard ``Gauss-lemma'' type theorem  of Finsler geometry (see e.g. \cite[\S 6.3]{Bao}), we have that for  any $v  \in T_xM$ with $\tilde F(v)= 1$ 
 there is a unique $\tilde F$-geodesic $\tilde \gamma$ through $x$,  tangent to $v$ at $t=0$, and such that the parameter $t$ is the  $\tilde F$-arclength.
 This  curve is smooth and for all  sufficiently small $\varepsilon$ its restriction   to  $[0,\varepsilon]$ is  the   unique $\tilde F$-shortest curve between 
 $x$ and its endpoint $\tilde \gamma(\varepsilon)$.   Take $\varepsilon_0$ small enough so that $\omega(\tilde \gamma' (t)) > \delta$ for all $t \in [0, \varepsilon_0)$. 
 Since  $F$ and $\tilde F$ agree on  $\{ F=1, \omega > \delta\}$, each  curve $\tilde \gamma|_{[0, \varepsilon_0)}$ is also the  unique  arc-length parameterized minimal $F$-geodesic  joining its   endpoints.  (Use $\tilde F \le F$ and argue by contradiction.)  
 Observe that  $\varepsilon_0$  can be universally  bounded away  from zero by a constant which depends on $\delta$, $\omega(v)$,  and    the 
  first and the second derivatives of $\omega$ and $g$ in some neighborhood of $x$. 
 
Now consider an  arc-length parameterized minimal  geodesic $ \gamma$ for $F$.   We call $t$ from its 
 domain of definition  {\it regular}, if     $\gamma'(t)$ exists  and  $\omega(\gamma'(t)) >0$.     By the paragraph above, the  
 regular times  form an open set of the interval of definition of the curve.  
   The {\it singular} times, being by definition the complement of the regular times,
 form a closed set  of measure zero.  
 
The key step in the proof of   Theorem  \ref{connect-thm} is the following  Lemma:

\begin{lemma}
\label{lem:smooth}

  Let $c:[0, T]\to M$ be an arc-length parameterized   minimal  geodesic of $F$ such that all $t\in [0,T)$ are regular.   Then,  $t=T$ is also 
      regular, that is  $\gamma'(T)$ exists  and $\omega(\gamma'(T))>0$.   
 
\end{lemma}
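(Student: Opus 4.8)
The plan is to show that the velocity of $c$ cannot degenerate as $t\to T$: concretely, that $\omega(c'(t))$ stays bounded away from zero, after which a standard extension of a smooth geodesic finishes the job. First I record the reductions. Since every $t\in[0,T)$ is regular, near each such $t$ the curve $c$ coincides with one of the smooth $\widetilde F$-geodesics constructed above; as $\widetilde F=F$ on $\{F=1,\ \omega>\delta\}$, this shows that $c|_{[0,T)}$ is a smooth solution of the Euler--Lagrange equation \eqref{EL} with $F(c',c')\equiv 1$, so that $\omega(c')=|c'|_g^2$ along $c$ (recall $g$ is positive definite here). Because the indicatrix at each point is the $g$-sphere of radius $|W|$ through the origin and $c([0,T])$ is compact, the velocities $c'(t)$, $t\in[0,T)$, lie in a fixed compact subset of $TM$. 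Consequently every subsequential limit $v_*=\lim_n c'(t_n)$ (with $t_n\to T$) satisfies $|v_*|_g^2=\omega(v_*)$, so $v_*$ is either a regular vector ($\omega(v_*)>0$) or $v_*=0$. Thus $T$ fails to be regular only if $\omega(c'(t))\to0$ as $t\to T$, and this is exactly what I will exclude.

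The heart of the matter is a differential inequality for $s\mapsto\omega(c'(s))$ along the arc-length parameter $s$. Writing $a:=\nabla^g_{c'}c'$ for the $g$-acceleration, one has $\tfrac{d}{ds}\omega(c')=\tfrac{d}{ds}|c'|_g^2=2g(c',a)$. Working in $g$-normal coordinates at the point, the geodesic equation forces $a\in\pi(b)+\mathbb R\,c'$, where $b$ is given by \eqref{A-b-g} and $\pi$ is the $g$-orthogonal projection onto $H=\ker\omega$; eliminating the reparameterization coefficient via the constraint $F(c')\equiv1$ yields $\tfrac{d}{ds}\omega(c')=2(\nabla^g\omega)(c',c')-2g(c',\pi(b))$. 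The key observation is that $\pi(b)=O(|c'|_g)$ as $c'\to0$: inspecting \eqref{A-b-g} under $|c'|_g^2=\omega(c')$, the only term of $b$ that is not $O(|c'|_g)$ is a multiple of $\omega_g$, and this term is annihilated by $\pi$ because $\pi(\omega_g)=0$. Together with $(\nabla^g\omega)(c',c')=O(|c'|_g^2)$ this gives $g(c',\pi(b))=O(|c'|_g^2)$, so that, uniformly on a compact neighborhood of $c(T)$ and for $\omega(c')$ below a fixed threshold,
\begin{equation*}
\Bigl|\frac{d}{ds}\,\omega(c'(s))\Bigr|\le C\,\omega(c'(s)).
\end{equation*}
Verifying this cancellation --- that the genuinely singular, $O(1)$ part of the acceleration lies along $\omega_g$ and hence drops out of $g(c',\pi(b))$ --- is the step I expect to require the most care, since it is precisely the $1/\omega$ blow-up of Lemma \ref{infinity} that must be shown to be harmless in the present regime, where $c'\to0$ rather than $c'\to\xi_0\in H\setminus\mathcal N$.

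Granting the inequality, Gronwall's lemma gives $\omega(c'(s))\ge\omega(c'(s_1))\,e^{-C(s-s_1)}$ for $s_1\le s<T$, so that $\liminf_{s\to T}\omega(c'(s))\ge\delta_1>0$; in particular $\omega(c')\to0$ is impossible and $\omega(c'(s))\ge\delta_1$ on some $[s_1,T)$. On the region $\{F=1,\ \omega\ge\delta_1/2\}$ the equation \eqref{EL} is a smooth, non-singular ODE and the velocities stay in a compact set, so the smooth solution $c|_{[s_1,T)}$ extends smoothly to $s=T$; equivalently, one may invoke the uniform minimizing time $\varepsilon_0$ of the auxiliary metric $\widetilde F$ and run the $\widetilde F$-geodesic starting from $(c(s_1),c'(s_1))$ with $s_1$ chosen so that $T-s_1<\varepsilon_0$, which coincides with $c$ by uniqueness of minimizers and is smooth past $T$. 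Either way $c'(T)$ exists and $\omega(c'(T))\ge\delta_1>0$, so $t=T$ is regular, completing the proof.
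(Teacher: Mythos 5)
Your proposal is correct and is essentially the paper's own argument: both reduce the Euler--Lagrange equation by the arc-length constraint $|c'|_g^2=\omega(c')$, derive the Gronwall-type inequality $\bigl|\tfrac{d}{ds}\omega(c')\bigr|\le C\,\omega(c')$ with $C$ uniform along the compact curve, and conclude via the resulting exponential lower bound $\omega(c')\ge\delta>0$ together with the uniform extension time $\varepsilon_0$ coming from the auxiliary Finsler metric $\widetilde F$. The only difference is bookkeeping: the paper obtains the inequality by applying $\omega$ to the reduced equation and bounding the explicit coefficient over the compact indicatrix, whereas you bound $g(c',\pi(b))$ directly via the cancellation $\pi(\omega_g)=0$; these amount to the same computation.
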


\begin{proof} As in Section \ref{sec:kropina},  for any $t\in [0, T)$
we set $\xi:= \gamma'(t)$, $\eta:= \gamma''(t)$. The Euler-Lagrange equation for $F$ in normal geodesic coordinates of $g$ around a point $\gamma(t)$ is calculated in  Section \ref{sec:kropina} and is equivalent to the system of equations  
given by $A\eta=b$ with
\begin{equation*}  
\begin{aligned} A\eta&=\eta-\frac{\omega(\eta)}{\omega(\xi)}\xi-\frac{g(\xi,
\eta)}{\omega(\xi)}\omega_g+
\frac{|\xi|_g^2\omega(\eta)}{\omega(\xi)^2}\omega_g, \\
b&=\frac{(\nabla\omega)(\xi,
\xi)}{\omega(\xi)}\xi+\frac{|\xi|_g^2}{\omega(\xi)}(d\omega)_g(\xi)-\frac{|%
\xi|_g^2 (\nabla\omega)(\xi, \xi)}{\omega(\xi)^2}\omega_g. \end{aligned}
\end{equation*}
Recall  that these equations are written for arbitrary parameterized geodesics; let us use the assumption that our geodesic is arc-length parameterized, 
which in regular points means   
\begin{equation}\label{unit}
|\xi|_g^2=\omega(\xi).
\end{equation}
Differentiating this equation in the direction of $\xi$, we obtain  
\begin{equation}\label{g-xi-eta}
2g(\xi, \eta)=\omega(\eta)+(\nabla\omega)(\xi, \xi).
\end{equation}
By \eqref{unit} and \eqref{g-xi-eta}, the equation  $A\eta=b$ is reduced to
\begin{equation*}
\eta-\frac{\omega(\eta)}{\omega(\xi)}\xi+\frac{\omega(\eta)}{2\omega(\xi)}
\omega_g=\frac{(\nabla\omega)(\xi, \xi)}{\omega(\xi)}
\Bigl(\xi-\frac{1}{2}\omega_g\Bigr)+(d\omega)_g(\xi).
\end{equation*}
Taking $\omega$ of both sides gives
$$
\omega(\eta)=
\frac{2(\nabla\omega)(\xi, \xi)+2d\omega(\xi, \omega_g)}{|\omega|_g^2}\omega(\xi)
-(\nabla\omega)(\xi, \xi).
$$
Thus we have
\begin{align*}
\frac{d}{dt}\omega(\xi)&=\omega(\eta)+(\nabla\omega)(\xi, \xi) \\
&=\frac{2(\nabla\omega)(\xi, \xi)+2d\omega(\xi, \omega_g)}{|\omega|_g^2}\omega(\xi) \\
&\ge -C\omega(\xi)
\end{align*}
where the constant $C$ is obtained by taking the maximum of  the absolute value 
of the coefficient $(2(\nabla\omega_x )(\xi, \xi)+2d\omega_x (\xi, \omega_g(x)))/(|\omega(x)|_g^2)$ appearing in the second line,
the maximum being taken as 
 $x$ varies over the compact curve $c([0,T])$ and $\xi$ varies over  
over the compact indicatrix  at $x$. (Recall that  $F(\gamma(t), \gamma'(t))= 1$ for $t<T$.)  
Hence we obtain for all $t \in [0, T)$  
$$
\omega(\xi)\ge\omega({ \gamma^\prime}(0))e^{-Ct} >\omega({ \gamma^\prime}(0))e^{-CT}=:\delta .    
$$ 
 As   explained above,  there exists      a  universal   positive 
  constant 
   $\varepsilon_0$ depending  on $\delta$ and on behavior of $g$ and $\omega$ in some neighborhood  of the geodesic   such that  for each    $t\in [0,T)$  the geodesic  can be extended for  time at least $\varepsilon_0$.   In particular,  $t=T$ is regular. 
  \end{proof} 

 We return to the proof of Theorem \ref{connect-thm}. 
 We will connect $p$ to a point $q$   by a Kropina geodesic   in the interior  of  the ball $B_r(p)$. 
 If we work under the assumptions of part (A) of the theorem, we take  $r>0$  sufficiently small. In view  
 of the condition $\omega\wedge d \omega_p\ne 0$,    the set of  interior points of $B_r(p)$ is   an   open   neighborhood of $p$. 
If we work under the assumptions of part (B), we take $r$ sufficiently large   so that  the set of interior points of $B_r(p)$ coincides with  the whole $M$.  
 
 For any interior  point $q$ of $B_r(p)$ we have $d(p,q)<r$. Consider 
  an  arc-length parameterized  minimal geodesic  $\gamma:[0,d(p,q)]\to M$ 
 joining $p$ to $q$. The existence of $\gamma$ is  explained above. All    interior points of $\gamma$   have  distance less than $d(p,q)$ to $p$ which implies that they lie   in $U$ as we require in the part (A) of Theorem \ref{connect-thm}. Let us show that all points of the geodesic are regular. 

Because this geodesic is a locally-Lipschitz curve of finite length, it has a regular point.  
As explained above, the set of  regular times  is   open  in $[0, d(p,q)]$ and so is a union of open intervals  of the form $[0, b),$ $  (a,b), $ or $(a, d(p,q)] $,
with $0 < a < b < d(p,q)$.    Observe though that  the intervals of the form $[0, b)$ and  $  (a,b) $ can not occur,  since  the  
endpoint $b$ is necessarily a regular point by Lemma \ref{lem:smooth}.     Thus, 
the set of regular points is either all of $[0, d(p,q)]$ or is $(0,d(p,q)]$.

It remains to show that only the first possibility occurs, that is, to  show that   the starting point $t=0$ is also regular.
For this purpose,  we  consider backward geodesics instead of forward geodesics.  (These
are geodesics for the Kropina structure  $-F = g / (-\omega)$.  
Clearly, $ \gamma( d(p,q)-t)$ is a   minimal arc-length parameterized  geodesic for the  backward distance).   All  arguments above  survive the  change from `forward' to `backward' geodesic.  The  change replaces   $t=0$ by $t= d(p,q)$   yielding  that   $t=0$ is regular.   Since now  all points of $\gamma$ are regular, it  is a  Kropina  geodesic of $F$.
 Theorem \ref{connect-thm} is proved.

\end{document}